\definecolor{e-mail}{rgb}{0,.40,.80}
\definecolor{reference}{rgb}{.20,.60,.22}
\definecolor{citation}{rgb}{0,.40,.80}
\newcommand{\cA}{\mathcal{A}}
\newcommand{\cC}{\mathcal{C}}
\newcommand{\cD}{\mathcal{D}}
\newcommand{\cE}{\mathcal{E}}
\newcommand{\cG}{\mathcal{G}}
\newcommand{\cH}{\mathcal{H}}
\newcommand{\cK}{\mathcal{K}}
\newcommand{\cL}{\mathcal{L}}
\newcommand{\cM}{\mathcal{M}}
\newcommand{\cO}{\mathcal{O}}
\newcommand{\cQ}{\mathcal{Q}}
\newcommand{\cS}{\mathcal{S}}
\newcommand{\bA}{\mathbf{A}}
\newcommand{\C}{\mathbf{C}}
\newcommand{\bE}{\mathbb{E}}
\newcommand{\bL}{\mathbb{L}}
\newcommand{\bP}{\mathbb{P}}
\newcommand{\R}{\mathbf{R}}
\newcommand{\bT}{\mathbb{T}}
\newcommand{\Z}{\mathbf{Z}}
\newcommand{\g}{\mathfrak{g}}
\newcommand{\h}{\mathfrak{h}}
\newcommand{\fl}{\mathfrak{l}}
\newcommand{\fm}{\mathfrak{m}}
\newcommand{\fp}{\mathfrak{p}}
\newcommand{\fu}{\mathfrak{u}}
\newcommand{\B}{\mathrm{B}}
\renewcommand{\d}{\mathrm{d}}
\renewcommand{\H}{\mathrm{H}}
\newcommand{\N}{\mathrm{N}}
\newcommand{\T}{\mathrm{T}}
\newcommand{\BD}{\mathrm{BD}}
\newcommand{\Bun}{\mathrm{Bun}}
\newcommand{\CAlg}{\mathrm{CAlg}}
\newcommand{\Ch}{\mathrm{Ch}}
\newcommand{\cl}{\mathrm{cl}}
\newcommand{\coad}{\mathrm{coad}}
\newcommand{\Crit}{\mathrm{Crit}}
\newcommand{\CrysCat}{\mathrm{CrysCat}}
\newcommand{\curv}{\mathrm{curv}}
\newcommand{\ddr}{\mathrm{d}_{\mathrm{dR}}}
\newcommand{\dive}{\mathrm{div}}
\newcommand{\dlog}{\ddr\log}
\newcommand{\dR}{\mathrm{dR}}
\newcommand{\DR}{\mathrm{DR}}
\newcommand{\End}{\mathrm{End}}
\newcommand{\ev}{\mathrm{ev}}
\newcommand{\fib}{\mathrm{fib}}
\newcommand{\free}{\mathrm{free}}
\newcommand{\Gm}{\mathbf{G}_{\mathrm{m}}}
\newcommand{\hCat}{\widehat{\mathrm{Cat}}_\infty}
\newcommand{\ham}{/\!\!/}
\newcommand{\HH}{\mathrm{HH}}
\newcommand{\Hom}{\mathrm{Hom}}
\newcommand{\id}{\mathrm{id}}
\newcommand{\Ind}{\mathrm{Ind}}
\newcommand{\IndCoh}{\mathrm{IndCoh}}
\newcommand{\LagrCorr}{\mathrm{LagrCorr}}
\newcommand{\Lie}{\mathrm{Lie}}
\newcommand{\LieAlgbroid}{\mathrm{LieAlgbroid}}
\newcommand{\LocSys}{\mathrm{LocSys}}
\newcommand{\Map}{\mathrm{Map}}
\newcommand{\MF}{\mathrm{MF}}
\newcommand{\Mod}{\mathrm{Mod}}
\newcommand{\LMod}{\mathrm{LMod}}
\newcommand{\oblv}{\mathrm{oblv}}
\newcommand{\Obs}{\mathrm{Obs}}
\newcommand{\Pic}{\mathrm{Pic}}
\newcommand{\pol}{\mathrm{pol}}
\newcommand{\prequant}{\mathrm{prequant}}
\newcommand{\PrL}{\mathrm{Pr}^{\mathrm{L}}}
\newcommand{\pt}{\mathrm{pt}}
\newcommand{\QCoh}{\mathrm{QCoh}}
\newcommand{\RealSqZ}{\mathrm{RealSqZ}}
\newcommand{\Rep}{\mathrm{Rep}}
\newcommand{\ShvCat}{\mathrm{ShvCat}}
\newcommand{\SL}{\mathrm{SL}}
\renewcommand{\sl}{\mathfrak{sl}}
\newcommand{\Sym}{\mathrm{Sym}}
\newcommand{\trivial}{\mathrm{trivial}}
\newcommand{\Vect}{\mathrm{Vect}}
\newcommand{\llbr}{[\![}
\newcommand{\rrbr}{]\!]}
\DeclareMathOperator{\Spec}{Spec}
\newcommand{\defterm}[1]{\textbf{\emph{#1}}}
\newtheorem{thm}{Theorem}[section]
\newtheorem*{thmintro}{Theorem}
\newtheorem{prop}[thm]{Proposition}
\crefname{prop}{proposition}{propositions}
\newtheorem*{propintro}{Proposition}
\newtheorem{cor}[thm]{Corollary}
\newtheorem{lm}[thm]{Lemma}
\newtheorem{conjecture}[thm]{Conjecture}
\theoremstyle{definition}
\newtheorem{defn}[thm]{Definition}
\theoremstyle{remark}
\newtheorem{remark}[thm]{Remark}
\newtheorem{example}[thm]{Example}
\begin{document}
\title{Shifted geometric quantization}
\author{Pavel Safronov}
\address{School of Mathematics, University of Edinburgh, Edinburgh, UK}
\email{p.safronov@ed.ac.uk}
\begin{abstract}
We introduce geometric quantization in the setting of shifted symplectic structures. We define Lagrangian fibrations and prequantizations of shifted symplectic stacks and their geometric quantization. In addition, we study many examples including symplectic groupoids, Hamiltonian spaces and moduli spaces of flat connections. In the case of Hamiltonian spaces we prove a derived analog of the ``quantization commutes with reduction'' principle.
\end{abstract}
\maketitle

\section*{Introduction}

We define geometric quantizations of shifted symplectic stacks in the sense of \cite{PTVV}, so that a geometric quantization of an $n$-shifted symplectic structure gives rise to an $(\infty, n)$-category. Our main results are:
\begin{itemize}
\item We give a definition of a prequantization of $n$-shifted Lagrangian fibrations, the input data to geometric quantization (\cref{def:prequantumfibration}).

\item We give the definition of the geometric quantization of prequantum $n$-shifted Lagrangian fibrations (\cref{def:geometricquantization1stack}).

\item We prove that a derived intersection of $n$-shifted Lagrangians equipped with $n$-shifted Lagrangian fibrations carries an $(n-1)$-shifted Lagrangian fibration (\cref{thm:prequantumLagrangianfibrationintersection}).

\item Given a derived stack $X$ equipped with an $n$-gerbe $\cG$ with characteristic class $c_1(\cG)$, we prove that the $n$-shifted twisted cotangent bundle $\T^*_{c_1(\cG)}[n] X$ carries a natural prequantum $n$-shifted Lagrangian fibration (\cref{thm:prequantumtwistedcotangent}).

\item We give many examples of prequantum $n$-shifted Lagrangian fibrations: coming from symplectic groupoids, coadjoint orbits, Slodowy slices and moduli spaces of flat connections (\cref{sect:examples}).

\item Given a Hamiltonian $G$-scheme $X$ equipped with a $G$-equivariant prequantization and polarization, we prove that its derived Hamiltonian reduction carries an induced $0$-shifted Lagrangian fibration whose geometric quantization is the space of $G$-invariants in the geometric quantization of $X$ (the quantization commutes with reduction principle, \cref{prop:QR}).
\end{itemize}

\subsection*{Quantization}

Let us briefly recall some mathematical formalizations of the idea of a quantization of a physical system. Classical mechanics is specified by a symplectic manifold $X$ together with a Hamiltonian function $H\colon X\rightarrow \R$. The corresponding quantum-mechanical system is described, in particular, by the following data:
\begin{itemize}
\item The \emph{algebra of observables} $\Obs$ (usually taken to be a $C^*$-algebra, but we will consider it as a plain algebra) together with a (self-adjoint) element $H\in\Obs$.

\item The \emph{space of states} $\cH$, an $\Obs$-module (usually taken to be a Hilbert space, but, again, we will consider it as a plain vector space).
\end{itemize}

A long-standing question in mathematical physics is about the relationship between the two sets of data. There are the following mathematical procedures:
\begin{itemize}
\item (\emph{Deformation quantization}). The algebra of observables is supposed to be a family of algebras parametrized by the Planck's constant $\hbar$. In formal deformation quantization \cite{BFFLS} one considers an algebra $\Obs$ flat over $\C\llbr \hbar\rrbr$ together with an isomorphism $\Obs|_{\hbar = 0}\cong C^\infty(X; \C)$ and satisfying $(ab - ba)/\hbar = \{a, b\} \pmod{\hbar}$ for every $a,b\in \Obs$, where $\{-, -\}$ is the Poisson bracket on $C^\infty(X)$ induced by the symplectic structure.

\item (\emph{Geometric quantization}). The space of states $\cH$ is supposed to have, roughly speaking, half of the degrees of freedom in $C^\infty(X)$. In geometric quantization the space of states is the space of sections of an appropriate line bundle over $X$ satisfying constraints specified by a Lagrangian distribution.
\end{itemize}

Besides mechanics, quantization is also relevant for field theories. Namely, given an $n$-dimensional classical field theory, the phase space associated to a hypersurface in spacetime (a closed $(n-1)$-dimensional manifold) is a symplectic manifold, so one may consider its deformation and geometric quantizations. In topological field theories one may also define phase spaces associated to lower-dimensional manifolds. Following \cite{AKSZ,CMRClassical,CalaqueTFT}, we will equip the phase spaces with a \emph{shifted symplectic structure}, the notion introduced in \cite{PTVV}. Then the phase space of an $n$-dimensional classical field theory associated to a closed $d$-dimensional manifold carries an $(n-d-1)$-shifted symplectic structure. Note that it also makes sense for $d = n$ in which case the corresponding $(-1)$-shifted symplectic structure is related to the Batalin--Vilkovisky antibracket. As an example, the 3-dimensional classical Chern--Simons theory associated to a compact Lie group $K$ has the phase spaces given by the moduli spaces of flat $K$-connections:
\begin{itemize}
\item $M$ is a closed oriented 3-manifold. The phase space is given by the critical locus of the Chern--Simons functional, so it carries the induced Batalin--Vilkovisky $(-1)$-shifted symplectic structure.

\item $M$ is a closed oriented 2-manifold. The phase space is endowed with the Atiyah--Bott symplectic structure.

\item $M=S^1$. The phase space is the stack of conjugacy classes $[K/K]$. The corresponding $1$-shifted symplectic structure is encoded, in particular, by the canonical 3-form on $K$.

\item $M=\pt$. The phase space is the classifying stack $\B K=[\pt/K]$. The corresponding $2$-shifted symplectic structure is encoded in the invariant symmetric bilinear form on $\Lie(K)$ used in the definition of the Chern--Simons theory.
\end{itemize}

The theory of shifted symplectic structures is developed the most in the context of derived algebraic geometry \cite{ToenDAG}, so from now on all our spaces will be derived Artin stacks over the field $k$. The algebra of (polynomial) functions on $X$ is denoted by $\cO(X)$. Quantizations of $n$-shifted symplectic stacks look as follows.

\begin{itemize}
\item (\emph{Deformation quantization}). The theory of deformation quantization of $n$-shifted symplectic stacks has not been well-developed, but let us mention \cite{CPTVV,Pridham0,Pridham1}. The idea is that the algebra of functions $\cO(X)$ carries a canonical $\bP_{n+1}$-structure, i.e. a Poisson bracket of cohomological degree $(-n)$. Deformation quantization should be given, in particular, by an $\bE_{n+1}$-algebra, i.e. an algebra over the operad of little $(n+1)$-disks. In the classical case (i.e. $n=0$) $\bP_1$-algebras are ordinary Poisson algebras and $\bE_1$-algebras are associative algebras.

\item (\emph{Geometric quantization}). This is the theory developed in this paper. The geometric quantization of an $n$-shifted symplectic stack is an $n$-category $\cC$ (more precisely, we use the formalism of $(\infty, n)$-categories). Given an $\bE_{n+1}$-algebra $A$, one can make sense of an $A$-linearity structure on $\cC$ and this gives a higher version of the action of observables on state spaces. As an example, a dg category $\cC$ has the Hochschild cohomology $\HH^\bullet(\cC)$, an $\bE_2$-algebra, and an $A$-linear structure on $\cC$ is given by a morphism of $\bE_2$-algebras $A\rightarrow \HH^\bullet(\cC)$ (see \cref{rem:LinCat} for the general notion in a stable presentable setting).
\end{itemize}

Note that the above picture is supposed to make sense for $n=-1$ as well. In that case an $\bE_0$-algebra is simply a vector space (or a chain complex) with a distinguished vector. Indeed, global observables in a quantum field theory do not have any algebraic structure, but there is a distinguished trivial observable. A module over an $\bE_0$-algebra $A$ is given by a functional $A\rightarrow k$ and in a quantum field theory such a functional is given by the path integral.

Let us mention that there are other approaches for describing ``higher'' symplectic structures on phase spaces. For instance, there is a theory of multisymplectic geometry, where an $n$-shifted symplectic structure is replaced by an $(n+1)$-plectic structure \cite{RomanRoy,Rogers}, i.e. a closed $(n+2)$-form (of degree zero) satisfying a certain nondegeneracy condition. A closed $(n+2)$-form of degree zero is an example of a closed $2$-form of degree $n$, but the nondegeneracy conditions for $n$-plectic and $n$-shifted symplectic structures diverge for $n > 0$. In this setting the theory of geometric quantization has been developed in \cite{Rogers,FRS, Nuiten}.

\subsection*{Geometric quantization}

The theory of geometric quantization was developed by Souriau \cite{Souriau} and Kostant \cite{KostantGQ} and goes back to Kirillov's orbit method \cite{Kirillov}. Some of the textbook references include \cite{Woodhouse,BatesWeinstein,EEMCRRVM}.

Let us sketch the relevant definition in the algebro-geometric context (see \cite{BeilinsonKazhdan}). Let $(X, \omega)$ be a smooth symplectic scheme over a field $k$ of characteristic zero. Geometric quantization requires the following data:
\begin{itemize}
\item (Prequantization) A line bundle $(\cL, \nabla)$ with a connection on $X$ whose curvature is $\omega$.

\item (Polarization) A Lagrangian foliation $\cM\subset \T_X$: a subbundle closed under the Lie bracket which is Lagrangian with respect to $\omega$.
\end{itemize}

The output of the geometric quantization is the vector space $\Gamma_{\nabla}(X, \cL)\subset \Gamma(X, \cL)$ of sections of $\cL$ flat along $\cM$. Note that we do not include a metaplectic correction. One may reformulate this definition as follows. The closed two-form $\omega$ allows one to consider the category $\cD_\omega(X)$ of $\omega$-twisted $\cD$-modules on $X$; a particular class of objects is given by vector bundles with a connection with central curvature $\omega\cdot\id$. Then we have functors
\[\Vect\xrightarrow{\text{prequantization}} \cD_\omega(X)\xrightarrow{\text{polarization}} \Vect,\]
where the first functor sends $V\mapsto V\otimes \cL$ and the second functor computes flat sections of a $\cD$-module along $\cM$. The composite gives the space of states.

To categorify geometric quantization, let us recall that for a scheme (or a derived stack) $X$ one has the following associated linear objects:
\begin{itemize}
\item The commutative algebra (chain complex) of global functions $\cO(X)$. The version with flat connections is the algebra $\Omega^\bullet(X)$ of differential forms. Given an element $\alpha\in\H^0(X, \Omega^{\geq 1})$, a closed one-form of degree 0, we may consider the twisted version $\Omega^\bullet_\alpha(X)$ with the differential $\ddr + \alpha\wedge(-)$.

\item The symmetric monoidal $\infty$-category of quasi-coherent complexes $\QCoh(X)$. The version with flat connections is the $\infty$-category of $\cD$-modules $\cD(X)=\QCoh(X_{\dR})$ (where $X_{\dR}$ is the de Rham stack). Given an element $\alpha\in\H^1(X, \Omega^{\geq 1})$, a closed one-form of degree 1, we may consider the twisted version $\cD_\alpha(X)$.

\item The symmetric monoidal $(\infty, 2)$-category of quasi-coherent sheaves of categories $\ShvCat(X)$ (see \cite{GaitsgoryShvCat}, \cite[Chapter 10]{SAG}). The version with flat connections is the $(\infty, 2)$-category of crystals of categories $\CrysCat(X) = \ShvCat(X_{\dR})$. Given an element $\alpha\in\H^2(X, \Omega^{\geq 1})$, a closed one-form of degree 2, we may consider the twisted version $\CrysCat_\alpha(X)$.

\item ...
\end{itemize}

We review some of these notions in \cref{sect:QCohQStk}. To categorify geometric quantization, we consider a derived stack $X$ with a $1$-shifted symplectic structure $\omega\in\H^1(X, \Omega^{\geq 2})\subset \H^2(X, \Omega^{\geq 1})$. Then we should get
\[\PrL\xrightarrow{\text{prequantization}} \CrysCat_\omega(X)\xrightarrow{\text{polarization}}\PrL,\]
where $\PrL$ is the $\infty$-category of presentable $\infty$-categories. More concretely, a prequantization is given by $(\cG, \nabla)$, a gerbe with a connective structure on $X$ \cite{Brylinski}, whose obstruction to having a curving is given by $\curv(\cG, \nabla) = \omega$. One can again define a polarization to be a 1-shifted Lagrangian foliation (see \cite{ToenVezzosi,BSY} for what this means in the present context) and take flat sections along that. In the main body of the text we consider a simplified approach where we assume the gerbe descends along a morphism $X\rightarrow B$ (whose fibers give the Lagrangian foliation) and simply take global sections of the gerbe on $B$. Here by global sections of a gerbe $\cG$ on $B$ we mean the $\infty$-category $\QCoh^\cG(B)$ of twisted quasi-coherent sheaves. We refer to \cref{def:prequantumfibration} for the full definition of prequantization and polarization and to \cref{def:geometricquantization1stack} for the definition of geometric quantization of this data.

The above picture admits an obvious extension to higher categories. But it also admits an extension to lower categories. Namely, for a $(-1)$-shifted symplectic stack $(X, \omega)$ over the ground field $k$ one may contemplate geometric quantization in the form of maps
\[k\xrightarrow{\text{prequantization}} \Omega_\omega(X)\xrightarrow{\text{polarization}}k.\]
We do not comment on the notion of a $(-1)$-shifted polarization in this paper. But the notion of a $(-1)$-shifted prequantization turns out to be closely related to the notion of the Batalin--Vilkovisky (BV) quantization \cite{Schwarz}. We prove the following result (see \cref{prop:BVquantization}) closely related to the work of \v{S}evera \cite{Severa}.

\begin{propintro}
Let $Y$ be a smooth scheme and $\omega$ the standard $(-1)$-shifted symplectic structure on $\T^*[-1] Y$. Then one has a quasi-isomorphism
\[\Omega_\omega(\T^*[-1] Y)\cong (\Gamma(\T^*[-1] Y, \pi^* K_Y)[-\dim Y], \Delta),\]
where $\Delta$ is the BV operator.
\end{propintro}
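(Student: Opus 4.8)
The plan is to compute both sides explicitly in terms of polyvector fields on $Y$ and to recognize each as (a model for) the de Rham complex of $Y$. First I would unwind the right-hand side. Since $Y$ is smooth, $\cO(\T^*[-1] Y) = \Sym_{\cO_Y}(\T_Y[1])$, and because $\T_Y[1]$ sits in odd degree this symmetric algebra is the algebra of polyvector fields $\bigoplus_k (\wedge^k \T_Y)[k]$, with $\wedge^k\T_Y$ in cohomological degree $-k$. Hence $\Gamma(\T^*[-1] Y, \pi^* K_Y) = \bigoplus_k \Gamma(Y, \wedge^k \T_Y \otimes K_Y)[k]$, and contraction with the canonical bundle gives the choice-free isomorphism $\wedge^k \T_Y \otimes K_Y \cong \Omega^{d-k}_Y$, where $d = \dim Y$. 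After the shift by $[-d]$ this identifies the underlying graded object with $\bigoplus_j \Gamma(Y,\Omega^j_Y)[-j]$, i.e.\ the de Rham complex $(\Omega^\bullet(Y), \ddr)$ with $\Omega^j_Y$ in degree $j$; the classical fact that the BV operator $\Delta$ on $K_Y$-valued polyvector fields is intertwined with $\ddr$ under contraction (divergence equals de Rham) upgrades this to an identification of complexes. This step I treat as known input and only need to verify signs.

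For the left-hand side, $\Omega_\omega(\T^*[-1] Y)$ is the $\omega$-twisted de Rham complex $(\Omega^\bullet(\T^*[-1] Y), \ddr + \omega \wedge (-))$, where I grade forms by total degree (de Rham weight plus internal degree) so that $\omega$, a closed $2$-form of internal degree $-1$, has total degree $1$; then $\ddr + \omega\wedge$ is a differential, squaring to zero because $\ddr\omega = 0$ and $\omega\wedge\omega = 0$ (the latter since $\omega$ is odd). The key point is that the $(-1)$-shifted cotangent bundle is exact: the tautological (Liouville) one-form $\lambda$, globally defined, satisfies $\ddr\lambda = \omega$. Since $\lambda$ has total degree $0$ and $[\ddr, \lambda\wedge] = \pm\,\omega\wedge$ while $[\omega\wedge, \lambda\wedge] = 0$, the Ad-series terminates and conjugation by the operator $e^{\mp\lambda\wedge}$ (locally finite, hence everywhere defined, as $\lambda^{d+1} = 0$) is an isomorphism of complexes $(\Omega^\bullet(\T^*[-1] Y), \ddr) \xrightarrow{\sim} \Omega_\omega(\T^*[-1] Y)$.

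It then remains to identify the untwisted de Rham complex of $\T^*[-1] Y$ with $\DR(Y)$. Working in local coordinates $\Omega^\bullet(\T^*[-1] Y)$ factors as $\DR(Y)$ tensored with the Koszul complex on the pairs $(\theta^i, \ddr\theta^i)$ coming from the fibre directions $\T_Y[1]$, and this Koszul factor is acyclic with cohomology $k$ (the de Rham cohomology of an odd line). By K\"unneth the pullback $\pi^*\colon \DR(Y) \to \Omega^\bullet(\T^*[-1] Y)$ is a quasi-isomorphism, and this globalizes because the objects involved are vector bundles on $Y$. Composing the three steps gives $\Omega_\omega(\T^*[-1] Y) \simeq \DR(Y) \simeq (\Gamma(\T^*[-1] Y, \pi^* K_Y)[-d], \Delta)$.

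The main obstacle I expect is bookkeeping rather than a conceptual gap: fixing compatible sign and grading conventions so that $\ddr + \omega\wedge$ is genuinely a differential, that $e^{\lambda\wedge}$ conjugates it to $\ddr$ with the correct sign, and that the contraction isomorphism intertwines $\Delta$ with $\ddr$ on the nose. A secondary point to handle carefully is the interpretation of the global sections and of the de Rham complex for non-affine $Y$ (working with complexes of sheaves, or with derived global sections), so that the K\"unneth and Koszul reduction remains valid globally.
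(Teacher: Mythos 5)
Your argument is correct (modulo the sign conventions and the local-to-global bookkeeping that you flag yourself), and it passes through the same intermediate object as the paper --- the realized de Rham complex of $Y$, identified with $(\Gamma(\T^*[-1]Y,\pi^*K_Y)[-\dim Y],\Delta)$ by the polyvector--form dictionary that the paper likewise takes as known input --- but your mechanism for removing the twist is genuinely different. The paper's proof of \cref{prop:BVquantization} never uses exactness of $\omega$: it first builds a special deformation retract of the completed, \emph{untwisted} de Rham complex of $\T^*[-1]Y$ onto $(\Gamma(Y,\Sym(\Omega^1_Y[-1])),\ddr)$ using the fiberwise Euler vector field (the homotopy comes from $\iota_e$ and $\ddr\iota_e+\iota_e\ddr$), and then treats $\omega\wedge(-)$ as a perturbation: the homological perturbation lemma applies because $\omega\wedge(-)\circ\iota_e$ is nilpotent, and the induced differential on the small complex stays equal to $\ddr$ because $p\circ\omega\wedge(-)\circ\iota_e\circ\omega\wedge(-)\circ i=0$, the Euler field vanishing on the zero section. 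You instead exploit $\omega=\ddr\lambda$: since the Liouville form $\lambda$ is strict, $\d$-closed and of even total degree, with $[\ddr,\lambda\wedge(-)]=\omega\wedge(-)$, $[\omega\wedge(-),\lambda\wedge(-)]=0$ and $\lambda^{\dim Y+1}=0$, conjugation by $\exp(\lambda\wedge(-))$ is an honest isomorphism of complexes between the twisted and untwisted realizations; your Koszul/K\"unneth contraction of the fiber directions is then the same mechanism as the paper's first step, phrased locally. Your route buys a cleaner handling of the twist (an isomorphism rather than a quasi-isomorphism extracted from a perturbation series) and makes transparent that exactness of the $(-1)$-shifted symplectic structure is why the answer is untwisted. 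The paper's route buys two things: the Euler contraction is global and functorial, so no \v{C}ech/K\"unneth globalization is needed --- this is the one thin spot in your sketch, since the inclusion of $\DR(Y)$ into $\DR(\T^*[-1]Y)$ is \emph{not} a weight-wise quasi-isomorphism, so for non-affine $Y$ the comparison of realizations must be run through a finite \v{C}ech totalization or the complete weight filtration, as you indicate; and the perturbation-lemma format is exactly what the paper reuses immediately afterwards for \v{S}evera's theorem, where $\iota_\pi$ replaces $\iota_e$ and a gauge-transformation argument is not available uniformly in the scalar parameter there denoted $\lambda$ (it degenerates at $\lambda=0$).
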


The notion of $(-1)$-shifted geometric quantization should be compared with the structure of global observables in quantum field theories, which have maps
\[k\xrightarrow{\trivial}\Obs\xrightarrow{\text{path integral}} k\]
whose composite is the partition function of the theory.

So, one may consider the work \cite{Schwarz} as giving a BV analog of geometric quantization while the approach via $\BD_0$-algebras (Beilinson--Drinfeld algebras) \cite[Chapter 7.3]{CostelloGwilliam2} is giving a BV analog of deformation quantization.

Different categorical levels are intertwined in the classical setting by the notion of an $n$-shifted Lagrangian morphism $L\rightarrow X$ \cite{PTVV}, where $X$ is an $n$-shifted symplectic stack. Such a structure often appears when one studies classical field theories defined on manifolds with boundary \cite{CMRClassical}. After geometric quantization $X$ gives rise to an $(\infty, n)$-category and $L\rightarrow X$ gives rise to an object. In the case $n = 0$ we get a structure closely related to the one appearing in the quantum BV-BFV formalism \cite{CMRQuantum} (note, however, that the authors consider a somewhat restrictive class of $0$-shifted Lagrangians of the form $L'\times L''\rightarrow X$, where $X$ is a symplectic manifold, $L''\hookrightarrow X$ is a smooth Lagrangian submanifold and $L'$ is a $(-1)$-shifted symplectic manifold).

\subsection*{Examples}

Apart from giving the definitions, the bulk of the paper is concerned with providing examples of (and providing tools to construct) prequantum $n$-shifted Lagrangian fibrations.

Recall that an important way to construct $n$-shifted symplectic structures is as follows. Given an $n$-shifted symplectic stack $X$ and two Lagrangian morphisms $L_1, L_2\rightarrow X$, the derived intersection $L_1\times_X L_2$ carries an $(n-1)$-shifted Lagrangian structure \cite[Theorem 2.9]{PTVV}. We extend this result to Lagrangians which carry prequantum data (\cref{thm:prequantumLagrangianfibrationintersection}).

\begin{thmintro}
Consider the diagram
\[
\xymatrix@R=0cm{
L_1 \ar[dr] \ar[dd] && L_2 \ar[dl] \ar[dd] \\
& X \ar[dd] & \\
B_{L_1} \ar[dr] && B_{L_2} \ar[dl] \\
& B_X &
}
\]
of derived stacks. Suppose $X$ is $n$-shifted symplectic, $L_1, L_2\rightarrow X$ are Lagrangian morphisms, $X\rightarrow B_X$ is a Lagrangian fibration and $L_i\rightarrow B_{L_i}$ are relative Lagrangian fibrations. Then the intersection $L_1\times_X L_2$ carries a Lagrangian fibration $L_1\times_X L_2\rightarrow B_{L_1}\times_{B_X} B_{L_2}$. The same claim is true if the Lagrangians are equipped with prequantizations.
\end{thmintro}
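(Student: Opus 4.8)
The plan is to reduce the statement to verifying the defining axioms of a Lagrangian fibration for a canonically induced map, and to build the required data by assembling the three given pieces of fibration data along the two homotopy fibre products. Throughout write $M = L_1\times_X L_2$ and $B_M = B_{L_1}\times_{B_X} B_{L_2}$, and let $g\colon M\to X$ and $\pi_M\colon M\to B_M$ be the evident maps. By \cite[Theorem 2.9]{PTVV} the derived intersection $M$ carries an $(n-1)$-shifted symplectic structure $\omega_M$, whose underlying closed $2$-form is the difference $h_1 - h_2$ of the two isotropic structures $h_1,h_2$ (nullhomotopies of $g^*\omega$ on $M$) coming from the Lagrangian morphisms $L_1,L_2\to X$. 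The commuting cube in the statement exhibits $L_i\to X\to B_X$ and $L_i\to B_{L_i}\to B_X$ as homotopic, so the universal property of $B_M$ produces $\pi_M$. It then remains to equip $\pi_M$ with an isotropic structure and to check nondegeneracy; the prequantum statement will be the same argument one categorical level higher.

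For the isotropy, I would assemble a nullhomotopy of the relative form $\omega_M$ along $\pi_M$ out of the relative nullhomotopies supplied by the three given fibrations. The point is that the difference presentation $\omega_M = h_1 - h_2$ is compatible with passage to relative forms: the relative Lagrangian fibration structure on $L_i\to B_{L_i}$ trivializes $h_i$ relative to $B_{L_i}$, while the Lagrangian fibration structure on $X\to B_X$ trivializes $\omega$ relative to $B_X$. Pulling these back to $M$ and combining them over the fibre product $B_M$ yields the desired nullhomotopy of the image of $\omega_M$ in relative closed $2$-forms for $\pi_M$. Concretely this is a diagram chase in the space of closed forms, using functoriality of the relative de Rham complex under the two projections $M\to L_i$ and $B_M\to B_{L_i}$.

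For nondegeneracy I would pass to tangent complexes. The nondegeneracy condition of the data identifies $\T_{X/B_X}$ with $\pi_X^*\bL_{B_X}[n]$, together with analogous relative equivalences for $L_i\to B_{L_i}$; the goal is $\T_{M/B_M}\simeq \pi_M^*\bL_{B_M}[n-1]$. Since both $M$ and $B_M$ are homotopy fibre products, I would write $\T_{M/B_M} = \fib(\T_M\to \pi_M^*\T_{B_M})$ and expand the relevant tangent and relative tangent complexes through the cotangent fibre sequences attached to the maps $L_i\to X$, $X\to B_X$ and $L_i\to B_{L_i}$. The isotropic structure from the previous step provides the comparison map $\T_{M/B_M}\to \pi_M^*\bL_{B_M}[n-1]$, and one checks it is an equivalence by interlocking the three given equivalences inside the resulting commutative diagram of fibre sequences, i.e.\ by running the nondegeneracy half of the PTVV argument in the family over $B_M$. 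This bookkeeping — organizing the cube of relative cotangent sequences so that the three nondegeneracies combine into the single one, coherently with the nullhomotopy built above — is the main obstacle; the shifts must be tracked with care, and it is here that the hypothesis that the $L_i\to B_{L_i}$ are fibrations \emph{relative to} $X$, rather than Lagrangian fibrations in their own right, enters.

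Finally, for the prequantum version the same strategy applies with closed $2$-forms replaced by gerbes and nullhomotopies by trivializations. The $(n-1)$-shifted prequantization $\cG_M$ on $M$ is the difference of the pullbacks of the prequantum gerbes on $L_1$ and $L_2$; this difference is canonically trivialized over $X$ and hence descends to a gerbe on $M$ with $\curv(\cG_M)=\omega_M$. Its descent along $\pi_M$ is obtained by combining the descents of the gerbes on $L_i$ to $B_{L_i}$ over the descent of the gerbe on $X$ to $B_X$, exactly parallel to the assembly of nullhomotopies above. Since the curvature of a gerbe recovers the corresponding closed $2$-form, the nondegeneracy verified above guarantees that this descended gerbe is a genuine polarization, so that the claim follows in the prequantum case as well.
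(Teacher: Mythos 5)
Your overall strategy---assemble the relative nullhomotopy by hand, then verify nondegeneracy by a tangent-complex computation---is viable, but it is genuinely different from the paper's route. The paper (\cref{thm:Lagrangianfibrationintersection}) never runs a tangent-complex argument: using \cref{prop:Lagrangianfibrationequivalent} it re-encodes each Lagrangian fibration as a Lagrangian morphism $X\rightarrow X\times B_X$ relative to $B_X$, applies the three-Lagrangian theorem of \cite[Theorem 3.1]{BenBassat} to $L_1\times B_X$, $X$, $L_2\times B_X$ over $X\times B_X$ to produce one $(n-1)$-shifted Lagrangian correspondence, base-changes the fibration data on $L_i\rightarrow B_{L_i}$ to produce a second, and then composes the two correspondences; both the isotropic structure on $L_1\times_X L_2\rightarrow B_{L_1}\times_{B_X}B_{L_2}$ and its nondegeneracy fall out of the composition, which is precisely the step you flag as ``the main obstacle.'' Your direct route can be completed: writing $M=L_1\times_X L_2$ and $B_M=B_{L_1}\times_{B_X}B_{L_2}$, one has $\bT_{M/B_M}\simeq \bT_{L_1/B_{L_1}}|_M\times_{\bT_{X/B_X}|_M}\bT_{L_2/B_{L_2}}|_M$, the three nondegeneracies identify this with $\bigl(\bL_{B_{L_1}/B_X}[n-1]\times_{\bL_{B_X}[n]}\bL_{B_{L_2}/B_X}[n-1]\bigr)\big|_M$, and this is $\pi_M^*\bL_{B_M}[n-1]$ because $\bL_{B_M}$ is the cofiber of $\bL_{B_X}|_{B_M}\rightarrow \bL_{B_{L_1}}|_{B_M}\oplus\bL_{B_{L_2}}|_{B_M}$. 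But making this chain of identifications coherent with the nullhomotopy you assembled is genuine $\infty$-categorical work; packaging it automatically is exactly what the paper's correspondence formalism buys, at the price of invoking the composition machinery as a black box.

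Two imprecisions in your sketch should be fixed. First, the fibration structures on $L_i\rightarrow B_{L_i}$ are relative to $B_X$, not to $X$, and they do not trivialize $h_i$ itself: they trivialize the relative $(n-1)$-shifted symplectic structure of \cref{prop:fibrationintersection}, i.e.\ the loop $h_i - f_i^*\eta_X$, where $\eta_X$ is the fibration nullhomotopy on $X\rightarrow B_X$; your assembly must route through $\eta_X$, writing $\omega_M = (h_1-\eta_X)|_M - (h_2-\eta_X)|_M$ before applying the two relative trivializations. Second, in the prequantum statement the output gerbe must live on the base $B_M$, not on $M$. In the paper's definitions the prequantum data already lives downstairs (an $n$-gerbe $\cG_X$ on $B_X$, trivializations of its pullbacks to $B_{L_1}$ and $B_{L_2}$), so the $(n-1)$-gerbe on $B_M$ is literally the difference of the two trivializations of $\cG_X|_{B_M}$---no descent along $\pi_M$ needs to be constructed, and phrasing it as descent of a gerbe built on $M$ obscures that this is data you must exhibit, not a property. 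What does require an argument is the connective structure on the pullback of this gerbe to $M$ compatible with the fiberwise flat connections; the paper (\cref{thm:prequantumLagrangianfibrationintersection}) extracts it from the map between spaces of closed one-forms constructed in the proof of \cite[Theorem 2.9]{PTVV}, and your ``combine the descents'' step should be replaced by exactly that.
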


Given any derived Artin stack $X$ locally of finite presentation, Calaque \cite{CalaqueCotangent} shows that the $n$-shifted cotangent bundle $\T^*[n] X$ possesses a canonical $n$-shifted symplectic structure. Given an element $\alpha\in\H^{n+1}(X, \Omega^{\geq 1})$, a closed one-form of degree $(n+1)$ on $X$, one may consider the twisted cotangent bundle $\T^*_\alpha[n] X$, see \cite[Section A.1]{BeilinsonKazhdan} for the definition in the underived context. Also, an $n$-gerbe $\cG$ on $X$ has a characteristic class $c_1(\cG)\in\H^{n+1}(X, \Omega^{\geq 1})$ obstructing the existence of a flat connection on $\cG$. Using the above statement about intersections of prequantized Lagrangians, we prove the following result (see \cref{thm:prequantumtwistedcotangent}).

\begin{thmintro}
Let $X$ be a derived Artin stack locally of finite presentation and $\cG$ an $n$-gerbe on $X$. Then
\[\T^*_{c_1(\cG)}[n] X\longrightarrow X\]
has a natural structure of a prequantum $n$-shifted Lagrangian fibration.
\end{thmintro}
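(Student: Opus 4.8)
The plan is to produce the three constituents of a prequantum $n$-shifted Lagrangian fibration on $\pi\colon \T^*_{c_1(\cG)}[n]X\to X$ in the sense of \cref{def:prequantumfibration}: an $n$-shifted symplectic form on the total space, the Lagrangian fibration structure on $\pi$, and a prequantization whose curvature is that symplectic form and whose underlying gerbe descends along $\pi$. The organizing principle is to realize the whole package as the output of the intersection result \cref{thm:prequantumLagrangianfibrationintersection}.

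First I would record the untwisted baseline $\T^*[n]X\to X$. The canonical $n$-shifted symplectic form $\omega_0$ of Calaque \cite{CalaqueCotangent} is exact, $\omega_0=\ddr\lambda$ for the tautological Liouville $1$-form $\lambda$ of degree $n$, and the cotangent projection is a Lagrangian fibration (\cite{ToenVezzosi,BSY}). This already carries a tautological prequantization, namely the trivial gerbe equipped with the connective structure determined by $\lambda$, whose curvature is $\ddr\lambda=\omega_0$ and which descends along $\pi$ to the trivial gerbe on $X$; so the untwisted case is a prequantum $n$-shifted Lagrangian fibration essentially by inspection.

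Next I would use the identification of $\T^*_{c_1(\cG)}[n]X\to X$ with the torsor of connective structures on $\cG$ (compare \cite{BeilinsonKazhdan,Brylinski}): the difference of two connective structures on an $n$-gerbe is an $n$-shifted $1$-form, so this torsor is modelled on $\T^*[n]X\to X$, and its class is exactly the obstruction $c_1(\cG)$ to a flat connective structure. Under this identification the points of the total space are connective structures on $\cG$, so the pullback $\pi^*\cG$ acquires a tautological connective structure $\nabla^{\mathrm{taut}}$; the proposed prequantization is $(\pi^*\cG,\nabla^{\mathrm{taut}})$, whose underlying gerbe descends along $\pi$ to $\cG$ on the base $X$, so that global sections on the base reproduce the twisted sheaves $\QCoh^{\cG}(X)$ (giving the geometric quantization of \cref{def:geometricquantization1stack}).

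The heart of the argument, and the step I expect to be the main obstacle, is the curvature identity $\curv(\pi^*\cG,\nabla^{\mathrm{taut}})=\omega$, where $\omega$ is Calaque's $n$-shifted symplectic form on the twisted cotangent bundle; this is the shifted analogue of the classical statement that the curvature of the universal connection on the space of connections of a line bundle is the canonical symplectic form. To make this precise, and to obtain the symplectic form, the Lagrangian fibration $\pi$, and the prequantization in one stroke, I would present $\T^*_{c_1(\cG)}[n]X$ as a Lagrangian intersection inside a suitable ambient $(n+1)$-shifted symplectic stack fibered over $X$, with the zero section and the $\cG$-twisted (connective-structure) section as the two relative prequantized Lagrangians, and apply \cref{thm:prequantumLagrangianfibrationintersection}. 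The technical difficulty lies in matching the torsor class with $c_1(\cG)$ at the level of closed forms rather than mere cohomology classes, and in checking that the tautological connective structure is compatible with the Lagrangian fibration, so that the hypotheses of the intersection theorem hold on the nose and its output is identified with the twisted cotangent bundle equipped with $\omega$.
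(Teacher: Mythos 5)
Your strategy coincides with the paper's own proof: the paper also obtains the result by intersecting two prequantized Lagrangian fibrations --- the zero section $\Gamma_0$ and the graph $\Gamma_{c_1(\cG)}$, each with base $X$ --- inside the ambient $(n+1)$-shifted cotangent bundle $\T^*[n+1]X\rightarrow X$ (prequantized by the trivial $(n+1)$-gerbe with the Liouville connective structure), and then invokes \cref{thm:prequantumLagrangianfibrationintersection}. The one place your proposal stays vague is exactly where the paper's argument is simplest. There is no ``matching of the torsor class with $c_1(\cG)$ at the level of closed forms'' to be done, because the twisted cotangent bundle is \emph{defined} as the fiber product $\Gamma_0\times_{\T^*[n+1]X}\Gamma_{c_1(\cG)}$, where $c_1(\cG)\in\cA^{1,\cl}(X, n+1)$ is already a closed one-form, namely the image of $\cG$ under the map of stacks $c_1\colon \B^{n+1}\Gm\rightarrow \cA^{1,\cl}(n+1)$; so the intersection-theorem output is the twisted cotangent bundle on the nose, with no identification needed. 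What remains is to prequantize each graph, and this is \cref{prop:prequantumcotangentsection}: one trivializes the pullback of the trivial $(n+1)$-gerbe to the base of the Lagrangian $\Gamma_{c_1(\cG)}$ by the $n$-gerbe $\cG$ itself (a trivialization of a trivial $(n+1)$-gerbe \emph{is} an $n$-gerbe), and then the residual one-form is $\Gamma_{c_1(\cG)}^*\lambda - c_1(\cG) = c_1(\cG) - c_1(\cG)$, which is canonically zero. In particular, the tautological-connective-structure and curvature-identity computation you anticipate as the heart of the proof is unnecessary: the curvature statement falls out of the intersection theorem automatically once the two graphs carry this (essentially trivial) prequantum data, and your torsor-of-connective-structures picture, while a correct heuristic, plays no role in the actual argument.
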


Another interesting example of a shifted symplectic stack is the classifying stack $\B G$ of an affine algebraic group $G$ equipped with nondegenerate invariant symmetric bilinear pairing on its Lie algebra. In this case we can prove negative results (see \cref{prop:BGprequantization,prop:evendimension}).

\begin{propintro}
Let $G$ be a connected reductive group.
\begin{itemize}
\item $\B G$ does not admit a prequantization.
\item If $\dim(G)$ is odd, $\B G$ does not admit a Lagrangian fibration.
\end{itemize}
\end{propintro}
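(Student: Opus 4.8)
The plan is to treat the two bullets separately, both resting on the identification $\bL_{\B G}\simeq\g^*[-1]$ and the semisimplicity of $\Rep(G)$. Since $\Omega^p_{\B G}\simeq\Sym^p(\g^*)[-p]$ and $\H^{>0}(\B G,V)=0$ for every $G$-representation $V$ (as $\Rep(G)$ is semisimple in characteristic zero), the weighted de Rham complexes collapse onto their diagonals and give $\H^{2}(\B G,\Omega^{\geq 2})\cong\H^{3}(\B G,\Omega^{\geq 1})\cong(\Sym^2\g^*)^G$, with the comparison map $\iota$ of the introduction an isomorphism. Under this identification the $2$-shifted symplectic form $\omega$ is precisely the chosen nondegenerate invariant quadratic form, so in particular its component on the semisimple part $[\g,\g]$ is nonzero.

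For the first bullet I would argue that $\omega$ cannot be the curvature of any prequantization because it is not even in the image of the characteristic-class map. A prequantization is a $2$-gerbe $\cG$ with connective structure whose obstruction to a curving is $\omega$; its existence forces $\iota(\omega)=c_1(\cG)$ to lie in the image of
\[c_1\colon \H^{3}(\B G,\Gm)\longrightarrow\H^{3}(\B G,\Omega^{\geq 1})\cong(\Sym^2\g^*)^G\]
induced by $\dlog\colon\Gm\to\Omega^{\geq 1}$. The key step is to show this image consists only of degenerate forms. Computing $\H^\bullet(\B G,\Gm)$ from the cosimplicial units $\cO(G^{\bullet})^\times=k^\times\oplus X^*(G)^{\oplus\bullet}$, the only non-torsion classes are generated by the Picard group $\Pic(\B G)=X^*(G)$; since $\dlog$ annihilates constants and is additive on units, the image of $c_1$ lands in the subspace spanned by products $\d\chi_1\cdot\d\chi_2$ of derivatives of characters, i.e.\ in $\Sym^2$ of the central part $\mathfrak{z}^*=(\g^*)^G$. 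Such forms vanish on $[\g,\g]$ and are therefore degenerate (for $G$ semisimple the image is even torsion, hence zero in the characteristic-zero vector space $(\Sym^2\g^*)^G$). As $\omega$ is nondegenerate it is not of this form, so no prequantization exists.

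For the second bullet I would use an Euler-characteristic count. A Lagrangian fibration $\pi\colon\B G\to B$ yields a fibre sequence $\T_{\B G/B}\to\T_{\B G}\to\pi^*\T_B$, and the defining Lagrangian (isotropic plus nondegeneracy) condition identifies the base and fibre directions as shifted duals, $\pi^*\T_B\simeq\bL_{\B G/B}[2]\simeq\T_{\B G/B}^\vee[2]$. Taking Euler characteristics and using $\chi(M^\vee[2])=\chi(M)$ gives $\chi(\T_{\B G})=\chi(\T_{\B G/B})+\chi(\pi^*\T_B)=2\,\chi(\T_{\B G/B})$, which is even. On the other hand $\T_{\B G}\simeq\g[1]$, so $\chi(\T_{\B G})=-\dim G$. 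Hence a Lagrangian fibration forces $\dim G$ to be even, and its contrapositive is exactly the claim: if $\dim(G)$ is odd then $\B G$ admits no Lagrangian fibration.

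The routine parts are the cohomology computations of the first paragraph, which are immediate from semisimplicity of $\Rep(G)$, and the parity count of the third, which only uses the tangent-complex duality built into the definition of a Lagrangian fibration. The main obstacle is the middle step of the first bullet: precisely controlling the image of $c_1$ on $\H^{3}(\B G,\Gm)$ and verifying that the higher $\Gm$-cohomology of $\B G$ contributes only decomposable, central (hence degenerate) classes and no indecomposable invariant quadratic form. Establishing that $\dlog$ never realizes the semisimple part of $(\Sym^2\g^*)^G$ by a gerbe is the crux of the no-prequantization statement.
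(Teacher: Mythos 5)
Your second bullet is correct and is essentially the paper's own argument (\cref{prop:evendimension}): take Euler characteristics in the defining fiber sequence $\bT_{X/B}\rightarrow \bT_X\rightarrow \bL_{X/B}[n]$ of a Lagrangian fibration, use that dualization and an even shift preserve the Euler characteristic, conclude $\dim(X) = 2\dim(\bT_{X/B})$, and apply this to $\bT_{\B G}\cong \g[1]$, whose Euler characteristic is $-\dim G$.

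Your first bullet, however, has a genuine gap. The overall strategy is legitimate and genuinely different from the paper's: you reduce to showing that the image of $c_1\colon \H^3(\B G,\Gm)\rightarrow \pi_0\cA^{1,\cl}(\B G,3)\cong (\Sym^2\g^*)^G$ contains no nondegenerate form, whereas the paper (\cref{prop:BGprequantization}) unpacks the prequantization by descent along $\pt\rightarrow \B G$ into a multiplicative gerbe on $G$ with curvature the Cartan three-form $H$ and a multiplicative line bundle on $G\times G$ with curvature $\omega$, and then rules these out using Grothendieck's theorem that gerbes on smooth schemes are torsion, the nonvanishing of $H$ in $\H^3_{\dR}(G)$ for semisimple $G$, and a separate computation for tori. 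But your execution of the key step fails in two ways. First, the structural claim is wrong: there is no algebraic cup product realizing $\d\chi_1\cdot\d\chi_2$ as $c_1$ of a $2$-gerbe, so $\H^3(\B G,\Gm)$ does \emph{not} contain non-torsion classes ``generated by $\Pic(\B G)$''; this is precisely the divergence between the algebraic and topological situations that makes the proposition true (topologically such classes do exist, giving the Chern--Simons $2$-gerbe). Second, and decisively, even if one grants your claim that the image of $c_1$ lies in $\Sym^2\mathfrak{z}^*$ for $\mathfrak{z}=(\g)^G$ the centre, the conclusion ``vanishes on $[\g,\g]$, hence degenerate'' is empty exactly when $G$ is a torus: there $\mathfrak{z}=\g$, $[\g,\g]=0$, and elements of $\Sym^2\mathfrak{z}^*$ can perfectly well be nondegenerate. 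So your outline proves nothing for tori, a case the paper must handle by a separate argument ($\Pic$ of a torus vanishes, torsionness of the gerbe, and non-exactness of $\dlog x\wedge\dlog y$ on $\Gm\times\Gm$). You yourself flag the control of the image of $c_1$ as an unresolved obstacle, so what you have is a plan with a hole at its crux rather than a proof.

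For what it is worth, your approach can be repaired and then becomes uniform in $G$: the descent spectral sequence for $\Map(\B G, \B^3\Gm)$ shows that $\H^3(\B G,\Gm)$ is entirely torsion. Indeed, by Rosenlicht $\cO(G^p)^\times = k^\times\oplus X^*(G)^{\oplus p}$ and a direct computation shows this cosimplicial abelian group has no cohomology above degree $1$; the terms $\Pic(G^p)$ are finite for connected reductive $G$; and the terms $\H^q(G^p,\Gm)$ for $q\geq 2$ (\'{e}tale cohomology) are torsion by the same theorem of Grothendieck the paper cites. Since $c_1$ is additive and $(\Sym^2\g^*)^G$ is a $k$-vector space with $k$ of characteristic zero, the image of $c_1$ is then zero, which rules out every nonzero $\omega$ at once, tori included.
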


Note that the absence of a prequantization is a deficiency of the algebraic situation and in the case of compact Lie groups the corresponding prequantization (but not polarization) exists, see \cite{Waldorf}.

Let us list examples described in \cref{sect:examples}:
\begin{itemize}
\item Given a symplectic groupoid $\cG\rightrightarrows X$, the quotient stack $[X/\cG]$ carries a 1-shifted symplectic structure and the projection $X\rightarrow [X/\cG]$ a Lagrangian structure. We show that a prequantization and polarization of this Lagrangian gives rise to a multiplicative prequantization and polarization of the symplectic groupoid as described in \cite{Hawkins}. The geometric quantization of this Lagrangian produces an $(\infty, 1)$-category with a distinguished object. The algebra of endomorphisms of this object should be considered as an approximation to deformation quantization of $X$ \cite{WeinsteinXu}.

\item Given a Hamiltonian $G$-scheme $X$, the moment map $\mu\colon X\rightarrow \g^*$ induces a $1$-shifted Lagrangian morphism $[X/G]\rightarrow [\g^*/G]$. We show (\cref{thm:prequantumHamiltonianspace}) that a $G$-equivariant prequantization and polarization of $X$ gives rise to a prequantization and polarization of this Lagrangian. This gives a natural perspective on the prequantization of the (derived) Hamiltonian reduction.

\item Let $G$ be a split semisimple group and $x\in\g^*$ a semisimple element. Its stabilizer is a Levi subgroup $L\subset P$ of a parabolic subgroup $P$. In fact, the coadjoint orbit $\cO$ of $x$ is equivalent to a twisted cotangent bundle of the partial flag variety $G/P$ \cite{Lisiecki}. By the above, the morphism $[\cO/G]\rightarrow [\g^*/G]$ has a $1$-shifted Lagrangian structure and we describe explicitly the corresponding polarization and prequantization in \cref{prop:semisimplecoadjoint}.

\item Given a nonzero nilpotent orbit $\cO\subset \g^*$, there is a slice $\cS\subset \g^*$ for the coadjoint action passing through the orbit $\cO$ known as the Slodowy slice. We construct a natural polarization on the corresponding $1$-shifted Lagrangian morphism $\cS\rightarrow [\g^*/G]$ in \cref{prop:slodowy}.

\item If $X$ is an $n$-shifted symplectic stack and $C$ a smooth projective curve, we prove (see \cref{prop:deRhamMap}) that the moduli stack of locally-constant maps $\Map(C_{\dR}, X)$ is a twisted cotangent bundle of the moduli stack of algebraic maps $\Map(C, X)$. In this way we show (see \cref{thm:ChernSimons}) that the geometric quantization of the moduli stack $\LocSys_G(C)$ of flat connections on $C$ is given by sections of a specific line bundle over the moduli stack $\Bun_G(C)$ of $G$-bundles.
\end{itemize}

\subsection*{Further examples}

There are some examples where the output of the geometric quantization is clear, but a proper treatment of these examples requires one to go beyond the setting of shifted symplectic structures in \cite{PTVV}.

\begin{enumerate}
\item Often supersymmetric field theories are only $\Z/2$-graded rather than $\Z$-graded. So, one is often faced with phase spaces which are $\Z/2$-graded supermanifolds. One may model these algebro-geometrically by working over the ground ring $k[u, u^{-1}]$, where $\deg(u) = 2$. In this case an $n$-shifted symplectic structure $\omega$ may be turned into an $(n+2)$-shifted symplectic structure $u\omega$. Note that this forces one to work with non-connective commutative dg algebras, i.e. we have to work in the setting of $D$-stacks.

For instance, consider a smooth scheme $X$ with a function $f\colon X\rightarrow \bA^1$. The derived critical locus $\Crit(f)$ of $f$ viewed as a $1$-shifted symplectic space is the phase space of the Landau--Ginzburg B-model on the point. Recall that $\Crit(f)$ is a twisted $(-1)$-shifted cotangent bundle of $X$ (see \cref{ex:dCrit}). We expect that it admits a prequantization as a $1$-shifted symplectic stack given by a gerbe that we will formally write as $\exp(uf)$. What should $\IndCoh^{\exp(uf)}(X)$ be?

Let us recall the picture for the $\infty$-category of matrix factorizations $\MF(X, f)$ given by Preygel \cite{Preygel}. The zero fiber $f^{-1}(0)$ carries an action of the derived group scheme $\Omega\bA^1$. Recall the $\infty$-category of ind-coherent sheaves $\IndCoh$ introduced in \cite{GaitsgoryIndCoh}. Then $\IndCoh(f^{-1}(0))$ carries an action of $\IndCoh(\Omega\bA^1)\cong \Mod_{k[\beta]}$, where $\deg(\beta) = 2$. One then has
\[\Ind\MF(X, f) = \IndCoh(f^{-1}(0))\otimes_{\Mod_{k[\beta]}} \Mod_{k[\beta, \beta^{-1}]}.\]
Let $\widehat{X}_{f^{-1}(0)}$ be the formal completion of $X$ along $f^{-1}(0)$. We expect that $\IndCoh^{\exp(uf)}(\widehat{X}_{f^{-1}(0)})$ coincides with the above $\infty$-category, where the twist by the gerbe $\exp(uf)$ identifies the variable $u$ in the ground ring with the variable $\beta$ in $\IndCoh(\Omega\bA^1)$. This provides an explanation why the category of boundary conditions in the Landau--Ginzburg B-model is given by the category of matrix factorizations \cite{KapustinLi}.

\item The definition of shifted symplectic structures in \cite{PTVV} is given for a derived Artin stack locally of finite presentation, which, in particular, implies that the stack admits a perfect cotangent complex. For a derived prestack $X$ we may ask, more generally, for $X$ to admit a pro-cotangent complex $\bL_X^{pro}\in\mathrm{ProQCoh}(X)$ (where $\mathrm{ProQCoh}(X)$ is the right Kan extension of the functor $A\mapsto \mathrm{Pro}(\Mod_A)$ defined on derived affine schemes), see \cite[Chapter 1]{GaitsgoryRozenblyum2}. Although pro-quasicoherent sheaves do not have a reasonable notion of self-duality, there is a subcategory $\mathrm{Tate}(X)\subset \mathrm{ProQCoh}(X)$ of Tate complexes which does admit self-duality. So, we may consider \emph{Tate prestacks} which are derived prestacks which admit a Tate cotangent complex $\bL_X^{Tate}$. We refer to \cite{Hennion,Heleodoro} for a related formalism. We expect that one may extend the definition of $n$-shifted symplectic structures to Tate prestacks; this is a joint work in progress with V. Melani and M. Porta.

Consider the stack $\LocSys_G(\mathring{D})$ of $G$-bundles with a connection on the ``formal'' punctured disk $\mathring{D} = \Spec k(\!(t)\!)$ (see \cite{Raskin} for a thorough study of this stack). We expect that \cref{prop:deRhamMap} extends to Tate prestacks, so that one may identify $\LocSys(\mathring{D})$ with a twisted 1-shifted cotangent bundle of $\B LG$, where $LG$, the loop group, is the group of maps $\mathring{D}\rightarrow G$. In particular, the geometric quantization of of $\LocSys(\mathring{D})$ should be the category of representations of the loop group $LG$ at a given level. $\LocSys_G(\mathring{D})$ is an algebro-geometric avatar of the phase of the classical Chern--Simons theory on the circle; in particular, this gives an explanation why the category of line operators in the quantum Chern--Simons theory is the category of $LG$-representations. We refer to \cite{FreedTeleman} for an interesting related statement.
\end{enumerate}

\subsection*{Acknowledgements}

I would like to thank Justin Hilburn and Theo Johnson-Freyd for useful conversations. This research was partially supported by the NCCR SwissMAP grant of the Swiss National Science Foundation.

\section{Polarizations}

Let $\cS$ be the $\infty$-category of small spaces (equivalently, $\infty$-groupoids). Throughout the paper we fix $k$, a field of characteristic zero. Let $\CAlg$ be the $\infty$-category of commutative dg $k$-algebras and $\CAlg^{\leq 0}\subset \CAlg$ the full subcategory of those concentrated in non-positive cohomological degrees. Recall that a derived prestack is a functor $\CAlg^{\leq 0}\rightarrow \cS$. A derived stack is a derived prestack satisfying \'{e}tale descent. \cite{HAGII,GaitsgoryRozenblyum1,SAG} are the standard sources on derived algebraic geometry.

\subsection{Differential forms on stacks}

Let $\pi\colon X\rightarrow S$ be a morphism of derived Artin stacks. Recall from \cite{PTVV, CPTVV} the notion of the relative de Rham algebra $\DR(X/S)$; it is a graded mixed commutative dg algebra which, as a plain graded commutative dg algebra, is
\[\DR(X/S)\cong \Gamma(X, \Sym(\bL_{X/S}[-1])),\]
where $\bL_{X/S}$ is the relative cotangent complex. Besides the cohomological grading, we also have the \emph{weight} grading by the form degree and besides the cohomological differential $\d$, we also have the de Rham differential $\ddr$ increasing both the cohomological and form degrees.

\begin{defn}
Let $X\rightarrow S$ be a morphism of derived Artin stacks.
\begin{itemize}
\item A \defterm{relative $p$-form of degree $n$ on $X\rightarrow S$} is a $\d$-closed element $\omega_p$ of $\DR(X/S)$ of weight $p$ and cohomological degree $p+n$. We denote by $\cA^p(X/S, n)$ the space of such.

\item A \defterm{closed relative $p$-form of degree $n$ on $X\rightarrow S$} is a $(\d+\ddr)$-closed element $\omega_p+\omega_{p+1}+\dots$ of $\DR(X/S)$, where $\omega_q$ is a $q$-form of degree $n-(q-p)$. We denote by $\cA^{p, \cl}(X/S, n)$ the space of such.
\end{itemize}
\end{defn}

If we assume $X\rightarrow S$ is locally of finite presentation, the relative cotangent complex $\bL_{X/S}$ is perfect, so we can define the tangent complex $\bT_{X/S} = (\bL_{X/S})^\vee$ as the dual. A relative two-form of degree $n$ induces a morphism
\[\omega^\sharp\colon \bT_{X/S}\longrightarrow \bL_{X/S}[n]\]
from the tangent to the cotangent complex.

\begin{defn}
A \defterm{relative $n$-shifted symplectic structure on $X\rightarrow S$} is a closed relative two-form $\omega$ of degree $n$, such that $\omega^\sharp\colon \bT_{X/S}\rightarrow \bL_{X/S}[n]$ is a quasi-isomorphism.
\end{defn}

The notion of shifted isotropic and shifted Lagrangian structures from \cite{PTVV} extends verbatim to the relative setting, so we will not repeat the definitions. Moreover, the proofs in \cite{PTVV,AmorimBenBassat,BenBassat,CHS} that $n$-shifted Lagrangian intersections carry an $(n-1)$-shifted symplectic structure go through without modifications.

\subsection{Lagrangian fibrations}

Let us recall the following notion introduced in \cite{CalaqueCotangent}.

\begin{defn}
Let $\pi\colon X\rightarrow B$ be a map of derived Artin stacks. The structure of an \defterm{$n$-shifted isotropic fibration on $\pi$} is an $n$-shifted presymplectic structure on $X$ together with a trivialization of its image under $\cA^{2, cl}(X, n)\rightarrow \cA^{2, cl}(X/B, n)$.
\end{defn}

Now assume $X$ is locally of finite presentation and consider the map $\bT_X\cong \bL_X[n]\rightarrow \bL_{X/B}[n]$. Given an $n$-shifted isotropic fibration $\pi\colon X\rightarrow B$, we have a natural sequence
\begin{equation}
\bT_{X/B}\longrightarrow \bT_X\longrightarrow \bL_{X/B}[n]
\label{eq:ndisotropicfibration}
\end{equation}
in $\QCoh(X)$.

\begin{defn}
An \defterm{$n$-shifted Lagrangian fibration} $\pi\colon X\rightarrow B$ is an $n$-shifted isotropic fibration such that \eqref{eq:ndisotropicfibration} is a fiber sequence.
\end{defn}

See \cite[Lemma 1.3]{CalaqueCotangent} for the following statement.
\begin{lm}
Suppose $\pi\colon X\rightarrow B$ is an $n$-shifted Lagrangian fibration. Then $X$ is $n$-shifted symplectic.
\end{lm}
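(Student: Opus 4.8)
The plan is to observe that the $n$-shifted isotropic fibration data already endows $X$ with a closed $2$-form $\omega$ of degree $n$, so the entire content of the lemma is the nondegeneracy assertion that $\omega^\sharp\colon\bT_X\to\bL_X[n]$ is a quasi-isomorphism. Everything I need is packaged in the fiber sequence \eqref{eq:ndisotropicfibration}: there the map $\bT_X\to\bL_{X/B}[n]$ is the composite $p\circ\omega^\sharp$, where $p\colon\bL_X[n]\to\bL_{X/B}[n]$ is the canonical projection coming from the cotangent cofiber sequence $\pi^*\bL_B\to\bL_X\to\bL_{X/B}$, and the nullhomotopy of $\bT_{X/B}\to\bT_X\to\bL_{X/B}[n]$ is exactly the trivialization of the relative $2$-form that defines the isotropic structure.

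First I would assemble a map of fiber sequences in $\QCoh(X)$, from \eqref{eq:ndisotropicfibration} to the $n$-shifted cotangent sequence $\pi^*\bL_B[n]\to\bL_X[n]\xrightarrow{p}\bL_{X/B}[n]$, with middle vertical map $\omega^\sharp$ and right vertical map $\id_{\bL_{X/B}[n]}$; the relevant square commutes by the very definition of the second map in \eqref{eq:ndisotropicfibration}. Taking fibers of the two horizontal maps produces an induced comparison $\phi\colon\bT_{X/B}\to\pi^*\bL_B[n]$. Since the right vertical map is an equivalence, the two-out-of-three property for maps of fiber sequences in a stable setting shows that $\omega^\sharp$ is a quasi-isomorphism if and only if $\phi$ is. Thus the whole problem is reduced to proving that $\phi$ is an equivalence.

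The core step is to identify $\phi$ using the (anti)symmetry of $\omega^\sharp$. As $X$ is locally of finite presentation, $\bL_X$, $\bL_{X/B}$ and $\pi^*\bL_B$ are perfect, so I may dualize \eqref{eq:ndisotropicfibration}. Duality is exact and contravariant on perfect complexes, and under $(\bT_{X/B})^\vee\cong\bL_{X/B}$, $(\bT_X)^\vee\cong\bL_X$, together with the fact that the natural map $i\colon\bT_{X/B}\to\bT_X$ is dual to $p$ and that a $2$-form satisfies $(\omega^\sharp)^\vee\simeq\pm\,\omega^\sharp$, the dual of \eqref{eq:ndisotropicfibration}, shifted by $n$, becomes a fiber sequence
\[\bT_{X/B}\xrightarrow{\ \pm\,\omega^\sharp\circ i\ }\bL_X[n]\xrightarrow{\ p\ }\bL_{X/B}[n].\]
This exhibits $\bT_{X/B}$ as the fiber of $p$; but that fiber is also $\pi^*\bL_B[n]$ by the cotangent sequence, and the resulting comparison is precisely $\phi$ up to sign. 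By uniqueness of fibers $\phi$ is therefore an equivalence, and hence so is $\omega^\sharp$, proving that $X$ is $n$-shifted symplectic.

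The hard part will be the bookkeeping of shifts and signs in the self-duality $(\omega^\sharp)^\vee\simeq\pm\,\omega^\sharp$, and, more seriously, checking at the level of coherent homotopies (rather than merely on cohomology) that the map induced on fibers by dualization genuinely agrees with the $\phi$ coming from the comparison square. If this coherence proves awkward to verify directly, a fallback is to note that nondegeneracy of $\omega^\sharp$ can be checked after restriction to affines, reducing to an explicit local computation; but the duality argument above is the conceptually cleanest route.
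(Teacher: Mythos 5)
A preliminary remark: the paper does not actually prove this lemma; it defers to \cite[Lemma 1.3]{CalaqueCotangent}. Your strategy --- reduce nondegeneracy of $\omega^\sharp$ to the induced map on fibers $\phi\colon\bT_{X/B}\to\pi^*\bL_B[n]$ by two-out-of-three, then attack $\phi$ by dualizing the fiber sequence \eqref{eq:ndisotropicfibration} and invoking antisymmetry of $\omega^\sharp$ --- is the natural argument and is essentially that of the cited reference. Your setup is correct: the comparison square commutes tautologically because the second map in \eqref{eq:ndisotropicfibration} is by definition $p\circ\omega^\sharp$, and in a stable setting the right vertical equivalence does reduce everything to $\phi$.

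The gap is in the sentence ``by uniqueness of fibers $\phi$ is therefore an equivalence,'' and it is exactly the issue you flag at the end --- so it should not be deferred as bookkeeping: it is the entire content of the lemma. Uniqueness of fibers produces \emph{some} equivalence $\psi\colon\bT_{X/B}\to\pi^*\bL_B[n]$, namely the one induced by the dualized sequence; it says nothing, by itself, about $\phi$. A map to $\fib(p)$ is the data of a map to $\bL_X[n]$ \emph{together with} a nullhomotopy of its composite with $p$; two such lifts with the same underlying map $\pm\,\omega^\sharp\circ i$ but different nullhomotopies differ by an arbitrary map $\bT_{X/B}\to\bL_{X/B}[n-1]$, and one lift can be an equivalence while another is not. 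Here $\phi$ is built from the nullhomotopy furnished directly by the isotropic-fibration structure (linearize the path $\gamma$ from $\omega_{X/B}$ to $0$ in relative two-forms), whereas $\psi$ is built from the \emph{dual} of that nullhomotopy. What must therefore be proved is that the antisymmetry identification $((-)^\sharp)^\vee[n]\simeq -(-)^\sharp$ is natural in the two-form, as an equivalence of functors from $\cA^2(X/B, n)$ to $\Map(\bT_{X/B}, \bL_{X/B}[n])$, so that it can be applied not only to the form $\omega_{X/B}$ but to the path $\gamma$; this identifies the two nullhomotopies and hence gives $\phi\simeq\pm\,\psi$, completing the proof. This is provable --- for instance, regarding two-forms as sections of $\Sym^2(\bL_{X/B}[-1])$ in the appropriate degree, the two adjoints of a pairing are interchanged by the canonical symmetry of $\Sym^2$, naturally in the form --- but it is a statement about the construction, not a formality. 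Finally, your fallback of restricting to affines does not repair this: nondegeneracy of a map of perfect complexes can indeed be tested affine-locally, but the two-nullhomotopies problem persists unchanged over each affine; it is a homotopy-coherence issue, not a local-to-global one.
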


We can reformulate the data of an $n$-shifted isotropic fibration in the following way. Observe that the morphism $\DR(X)\rightarrow \DR(X/B)$ factors as
\[\DR(X)\longrightarrow \DR(X\times B/B)\longrightarrow \DR(X/B),\]
where the first morphism regards a differential form on $X$ as a relative differential form on $X\times B\rightarrow B$ constant along $B$ and the second morphism is the pullback along $(\id\times \pi)\colon X\rightarrow X\times B$.

\begin{prop} $ $
\begin{enumerate}
\item An $n$-shifted isotropic fibration $f\colon X\rightarrow B$ is the same as a pair of an $n$-shifted presymplectic structure on $X$ and an $n$-shifted isotropic structure on $(\id\times \pi)\colon X\rightarrow X\times B$ relative to $B$.
\item An $n$-shifted isotropic fibration $\pi$ is Lagrangian if, and only if, $(\id\times \pi)\colon X\rightarrow X\times B$ is Lagrangian.
\end{enumerate}
\label{prop:Lagrangianfibrationequivalent}
\end{prop}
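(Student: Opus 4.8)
The plan is to treat the two parts separately, deducing part (1) almost formally from the factorization of the de Rham map recalled above, and reducing part (2) to a diagram chase comparing two nondegeneracy conditions. For part (1), I would observe that the first map $\DR(X)\to\DR(X\times B/B)$ carries the $n$-shifted presymplectic structure $\omega\in\cA^{2,\cl}(X,n)$ to a relative $n$-shifted presymplectic structure $\omega_{\mathrm{rel}}$ on $X\times B\to B$, which is exactly the structure one needs in order to speak of an isotropic structure on $(\id\times\pi)$ relative to $B$. By definition such a relative isotropic structure is a trivialization of the pullback of $\omega_{\mathrm{rel}}$, that is, of its image under the second map $\DR(X\times B/B)\to\DR(X/B)$. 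Since the composite of the two maps is precisely the map $\cA^{2,\cl}(X,n)\to\cA^{2,\cl}(X/B,n)$ appearing in the definition of an isotropic fibration, a trivialization of the image of $\omega$ is the same datum as a trivialization of the image of $\omega_{\mathrm{rel}}$. Hence the two packages of data agree, naturally; this is really just unwinding the definitions.

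For part (2), by part (1) the underlying isotropic data already match, so it remains to compare the two nondegeneracy conditions. First I would compute the relative cotangent complex of $g=(\id\times\pi)$ over $B$. Since $g$ is a morphism over $B$ with $\mathrm{pr}_X\circ g=\id_X$, the relative cotangent fiber sequence $g^*\bL_{(X\times B)/B}\to\bL_{X/B}\to\bL_{X/(X\times B)}$ reads $\bL_X\to\bL_{X/B}\to\bL_{X/(X\times B)}$, using $g^*\bL_{(X\times B)/B}=g^*\mathrm{pr}_X^*\bL_X=\bL_X$. This identifies $\bL_{X/(X\times B)}$ with the cofiber of $\bL_X\to\bL_{X/B}$, i.e. with $\pi^*\bL_B[1]$ by rotating the standard sequence $\pi^*\bL_B\to\bL_X\to\bL_{X/B}$; dually $\bT_{X/(X\times B)}\simeq\pi^*\bT_B[-1]$. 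Unwinding the relative Lagrangian condition of \cite{PTVV} for $g$ — interpreted as the nondegeneracy equivalence $\bT_{X/(X\times B)}\to\bL_{X/B}[n-1]$, which makes sense even when the ambient is only presymplectic — it asserts that a certain map $\bar q\colon\pi^*\bT_B\to\bL_{X/B}[n]$ is an equivalence.

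I would then match this with the Lagrangian-fibration condition via an octahedral argument. Write $p\colon\bT_X\to\pi^*\bT_B$ for the projection, with $\fib(p)=\bT_{X/B}$, and $q\colon\bT_X\to\bL_{X/B}[n]$ for the composite of $\omega^\sharp$ with the restriction $\bL_X[n]\to\bL_{X/B}[n]$. The isotropic structure supplies a nullhomotopy of $q$ restricted to $\bT_{X/B}$, which is precisely the datum factoring $q$ as $\bar q\circ p$ through the cofiber $\pi^*\bT_B$ of $\bT_{X/B}\to\bT_X$. The octahedral axiom for this factorization yields a fiber sequence $\fib(p)\to\fib(q)\to\fib(\bar q)$; since $\fib(p)=\bT_{X/B}$, the Lagrangian-fibration condition (that $\bT_{X/B}\to\bT_X\to\bL_{X/B}[n]$ is a fiber sequence) says exactly that $\bT_{X/B}\to\fib(q)$ is an equivalence, which by the octahedron holds if and only if $\fib(\bar q)=0$, i.e. if and only if $\bar q$ is an equivalence — the relative Lagrangian condition just obtained. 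Both conditions in particular force $X$ to be $n$-shifted symplectic, so there is no mismatch in the ambient nondegeneracy hypothesis.

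The main obstacle I anticipate is purely homotopy-coherent bookkeeping: one must check that the map $\bar q$ produced by factoring the isotropic-fibration sequence is genuinely identified — including its nullhomotopy, not merely its source and target — with the nondegeneracy map appearing in the relative Lagrangian structure of \cite{PTVV}, and that the shift identification $\bT_{X/(X\times B)}\simeq\pi^*\bT_B[-1]$ is compatible with these maps. Carrying out the octahedral comparison functorially, rather than objectwise, is where the real care is needed.
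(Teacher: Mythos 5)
Your proposal is correct, and for part (1) it is word-for-word the paper's argument: the trivialization of the image of $\omega$ under $\cA^{2,\cl}(X,n)\rightarrow\cA^{2,\cl}(X/B,n)$ is the same datum as the trivialization of the image of $\omega_{\mathrm{rel}}$ under $\DR(X\times B/B)\rightarrow\DR(X/B)$, because the de Rham map factors through $\DR(X\times B/B)$. For part (2) you reach the right conclusion by a slightly different route than the paper. The paper takes the relative Lagrangian condition in its fiber-sequence form: $(\id\times\pi)$ is Lagrangian relative to $B$ if and only if
\[\bT_{X/B}\longrightarrow \bT_{X\times B/B}\longrightarrow \bL_{X/B}[n]\]
is a fiber sequence, and then the whole proof collapses to the single observation $g^*\bT_{(X\times B)/B}\cong\bT_X$ (which you also use, in the dual form $g^*\bL_{(X\times B)/B}\cong\bL_X$), making the comparison with \eqref{eq:ndisotropicfibration} tautological. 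You instead take the nondegeneracy condition in its ``induced map is an equivalence'' form, compute $\bT_{X/(X\times B)}\simeq\pi^*\bT_B[-1]$ by rotating the transitivity sequence, and then reconcile the two conditions with an octahedral argument showing $\fib(p)\rightarrow\fib(q)$ is an equivalence iff $\fib(\bar q)\simeq 0$. That octahedral step is precisely the standard proof that the two formulations of the Lagrangian condition agree, so your argument is a correct but longer unwinding of the same content; what it buys is an explicit identification of the relative (co)tangent complex of $(\id\times\pi)$ and a transparent account of where the two nondegeneracy statements meet, at the cost of the homotopy-coherence bookkeeping you rightly flag at the end (which the paper's choice of formulation avoids entirely). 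Your closing remark that both conditions force $X$ to be $n$-shifted symplectic is also consistent with the paper, which records this separately as the lemma quoted from \cite{CalaqueCotangent}.
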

\begin{proof} $ $
\begin{enumerate}
\item The null-homotopy of the image of $\omega$ under $\DR(X)\rightarrow \DR(X/B)$ is the same as the null-homotopy of the image of $\omega$ under $\DR(X\times B/B)\rightarrow \DR(X/B)$.

\item The map $(\id\times \pi)\colon X\rightarrow X\times B$ is Lagrangian if, and only if,
\[\bT_{X/B}\longrightarrow \bT_{X\times B/B}\longrightarrow \bL_{X/B}[n]\]
is a fiber sequence. But $\bT_{X\times B/B}\cong \bT_X$. So it is a fiber sequence if, and only if, \eqref{eq:ndisotropicfibration} is a fiber sequence.
\end{enumerate}
\end{proof}

\begin{cor}
Suppose $\pi\colon X\rightarrow B$ is an $n$-shifted Lagrangian fibration. For any $b\in B$ the inclusion of the fiber $X_b\rightarrow X$ carries an $n$-shifted Lagrangian structure.
\end{cor}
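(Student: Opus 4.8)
The plan is to produce both the isotropic structure and the nondegeneracy on $i\colon X_b\to X$ by base change along $b\colon\pt\to B$, exploiting the reformulation in \cref{prop:Lagrangianfibrationequivalent}. By part (2) of that proposition, $(\id\times\pi)\colon X\to X\times B$ is Lagrangian relative to $B$, where $X\times B\to B$ carries the relative $n$-shifted symplectic structure given by $\omega$ constant along $B$. Forming the cartesian square over $b$, the fiber of $X\times B\to B$ is $X$ with its form $\omega$, the fiber of $\pi$ is $X_b$, and the base change of $(\id\times\pi)$ is exactly $i\colon X_b\to X$. Thus it suffices to show that a relative Lagrangian base-changes to a Lagrangian on the fiber, which I would verify directly at the level of the two defining data.

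For the isotropic structure, recall that the isotropic fibration structure on $\pi$ is a trivialization $\gamma$ of the image of $\omega$ in $\cA^{2,\cl}(X/B,n)$. Since the square is cartesian we have $\bL_{X_b}\cong i^*\bL_{X/B}$, hence an induced map of graded mixed algebras $\DR(X/B)\to\DR(X_b)$ fitting into a commuting triangle with the projection $\DR(X)\to\DR(X/B)$ and the pullback $i^*\colon\DR(X)\to\DR(X_b)$. Applying this base-change map to $\gamma$ yields a trivialization of $i^*\omega$ in $\cA^{2,\cl}(X_b,n)$, which is the desired isotropic structure on $i$.

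For nondegeneracy I would pull back the fiber sequence \eqref{eq:ndisotropicfibration} along $i$. Using $i^*\bT_{X/B}\cong\bT_{X_b}$ and $i^*\bL_{X/B}\cong\bL_{X_b}$ together with the exactness of $i^*$, this gives a fiber sequence $\bT_{X_b}\to i^*\bT_X\to\bL_{X_b}[n]$. By construction the last map is $i^*\omega^\sharp$ followed by the pullback of $\bL_X[n]\to\bL_{X/B}[n]$; transporting along the equivalence $i^*\omega^\sharp\colon i^*\bT_X\xrightarrow{\sim} i^*\bL_X[n]$, it becomes the canonical map $i^*\bL_X[n]\to\bL_{X_b}[n]$, whose fiber is $\bL_{X_b/X}[n-1]$. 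Comparing fibers yields an equivalence $\bT_{X_b}\xrightarrow{\sim}\bL_{X_b/X}[n-1]$, which is precisely the nondegeneracy condition making $i$ Lagrangian; combined with the isotropic structure of the previous paragraph, this proves the claim.

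The main obstacle is the one nonformal point in the last step: one must check that the two maps $i^*\bL_X\to\bL_{X_b}$ — the canonical functoriality map for $i$, and the one obtained by pulling back the projection $\bL_X\to\bL_{X/B}$ — agree compatibly, so that the pulled-back fiber sequence genuinely realizes $\bL_{X_b/X}[n-1]$. Both maps have cofiber $p^*\bL_B[1]$ with $p=\pi\circ i\colon X_b\to B$, and their identification is the standard coherence of the cotangent complex under base change along the cartesian square. Everything else — the descent of the trivialization and the preservation of the fiber sequence — is pure functoriality and exactness of $i^*$, so this base-change compatibility is the only step requiring care.
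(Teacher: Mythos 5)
Your proposal is correct and follows essentially the same route as the paper: invoke \cref{prop:Lagrangianfibrationequivalent} to recast the Lagrangian fibration as a Lagrangian structure on $(\id\times\pi)\colon X\rightarrow X\times B$ relative to $B$, then base change along $b\colon\pt\rightarrow B$ to land on $X_b\rightarrow X$. The only difference is one of detail: the paper states the base-change step in a single sentence, while you verify explicitly that relative Lagrangian structures pull back (descent of the trivialization via $\DR(X/B)\rightarrow\DR(X_b)$ and the identification $\bT_{X_b}\xrightarrow{\sim}\bL_{X_b/X}[n-1]$), which the paper treats as standard.
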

\begin{proof}
By \cref{prop:Lagrangianfibrationequivalent} we have an $n$-shifted Lagrangian structure on $X\rightarrow X\times B$ relative to $B$. Considering the base change along the map $b\colon \pt\rightarrow B$ we obtain an $n$-shifted Lagrangian structure on $\pt\times_B X\rightarrow X$.
\end{proof}

\begin{example}
Suppose $X$ is a derived Artin stack locally of finite presentation. It is shown in \cite{CalaqueCotangent} that the $n$-shifted cotangent stack $\T^*[n] X$ is $n$-shifted symplectic. Moreover, the natural projection $\T^*[n] X\rightarrow X$ is an $n$-shifted Lagrangian fibration.
\label{ex:shiftedcotangent}
\end{example}

Let us mention that \cref{prop:Lagrangianfibrationequivalent} suggests a slight generalization of the notion of a Lagrangian fibration, see \cite{GuilleminSternbergGeneralization}.

\begin{defn}
An \defterm{$n$-shifted Lagrangian triple} is a correspondence of derived Artin stacks $X\leftarrow Z\rightarrow B$ together with an $n$-shifted symplectic structure on $X$ and an $n$-shifted Lagrangian structure on $Z\rightarrow X\times B$ relative to $B$.
\label{def:Lagrangiantriple}
\end{defn}

There are the following examples:
\begin{itemize}
\item For $Z=X$ we recover the notion of an $n$-shifted Lagrangian fibration.

\item For $B = \pt$ we recover the notion of an $n$-shifted Lagrangian.
\end{itemize}

\subsection{Intersections of Lagrangian fibrations}

Intersections of Lagrangian fibrations behave in a way analogous to intersections of Lagrangians. Let us begin with the following observation (see also \cite[Proposition 3.3]{Grataloup}).

\begin{prop}
Suppose $\pi\colon X\rightarrow B$ is an $n$-shifted Lagrangian fibration and $f\colon L\rightarrow X$ is an $n$-shifted Lagrangian. Then $L$ carries an $(n-1)$-shifted symplectic structure relative to $B$ via the projection $L\rightarrow X\rightarrow B$.
\label{prop:fibrationintersection}
\end{prop}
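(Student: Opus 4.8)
The plan is to realize $L$ as a derived intersection of two Lagrangians \emph{relative to} $B$ and then invoke the relative version of the Lagrangian intersection theorem, which (as noted just before \cref{prop:Lagrangianfibrationequivalent}) holds without modification. First I would set $M = X\times B$ and regard it as a relatively $n$-shifted symplectic stack over $B$: the image of the symplectic form $\omega$ on $X$ under $\DR(X)\to\DR(X\times B/B)$ is a relative closed $2$-form of degree $n$, and nondegeneracy is immediate since $\bT_{X\times B/B}\cong\bT_X\xrightarrow{\omega^\sharp}\bL_X[n]\cong\bL_{X\times B/B}[n]$.

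Second, I would produce two relative Lagrangians into $M$ over $B$. By \cref{prop:Lagrangianfibrationequivalent} the Lagrangian fibration $\pi$ endows $(\id,\pi)\colon X\to X\times B$ with a relative Lagrangian structure over $B$. On the other side, base change along $B\to\pt$ turns the absolute Lagrangian $f\colon L\to X$ into a relative Lagrangian $f\times\id\colon L\times B\to X\times B$ over $B$: the relative isotropic structure is the constant family of the isotropic structure on $f$, and relative nondegeneracy is checked fiberwise, where it reduces to the Lagrangian condition on $f$.

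Third, I would identify the derived intersection. Because $(\id,\pi)$ is the graph of $\pi$, a $T$-point of $X\times_{X\times B}(L\times B)$ amounts to a point $T\to L$ together with contractible coherence data, so there is a canonical equivalence $X\times_{X\times B}(L\times B)\simeq L$ compatible with the projection to $B$. The relative Lagrangian intersection theorem then equips this intersection, namely $L$, with an $(n-1)$-shifted symplectic structure relative to $B$ via $L\to X\to B$, as desired.

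I expect the only genuine subtlety to lie in the second and third steps: checking that base change really yields a relative Lagrangian (rather than merely relative isotropic) structure, and that the homotopy fiber product collapses to $L$ on the nose. Both are formal, and the analytic content—building the relative $(n-1)$-shifted form and proving its nondegeneracy by a tangent-complex chase—is precisely what the cited relative intersection theorem absorbs. As an alternative one could run that chase directly: the image of $\omega$ in $\cA^{2,\cl}(L/B,n)$ carries two trivializations, one obtained by pulling back the Lagrangian-fibration trivialization along $f$ and one obtained by pushing the trivialization of $f^*\omega$ to $\cA^{2,\cl}(L/B,n)$. Their difference is a loop, which under $\Omega\,\cA^{2,\cl}(L/B,n)\simeq\cA^{2,\cl}(L/B,n-1)$ produces the candidate form $\omega_L$, and $\omega_L^\sharp$ is then seen to be an equivalence using $\bT_{X/B}\simeq\pi^*\bL_B[n]$ (from the Lagrangian fibration) together with $\bT_{L/X}\simeq\bL_L[n-1]$ (from the Lagrangian structure on $f$).
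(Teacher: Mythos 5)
Your proposal is correct and is essentially the paper's own proof: the paper likewise exhibits $L$ as the derived pullback of $\id\times\pi\colon X\rightarrow X\times B$ against $f\times\id\colon L\times B\rightarrow X\times B$, with the first leg a relative Lagrangian by \cref{prop:Lagrangianfibrationequivalent} and the second by base change, and then invokes the relative form of the Lagrangian intersection theorem of \cite[Section 2.2]{PTVV}. The verifications you single out (relative nondegeneracy after base change, and the collapse of the fiber product to $L$) are precisely the details the paper leaves implicit.
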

\begin{proof}
We have a pullback diagram
\[
\xymatrix{
L \ar^{f}[d] \ar^-{\id\times \pi\circ f}[r] & L\times B \ar^{f\times \id}[d] \\
X \ar^-{\id\times \pi}[r] & X\times B.
}
\]

Since both $X\rightarrow X\times B$ and $L\times B\rightarrow X\times B$ are Lagrangian relative to $B$, by \cite[Section 2.2]{PTVV} we obtain that $L$ carries an $(n-1)$-shifted symplectic structure relative to $B$ since it is a Lagrangian intersection.
\end{proof}

This motivates the following definition.

\begin{defn}
Suppose $X$ is an $n$-shifted symplectic stack and $f\colon L\rightarrow X$ is an $n$-shifted Lagrangian morphism. An \defterm{$n$-shifted Lagrangian fibration} on $L\rightarrow X$ is the following triple:
\begin{enumerate}
\item An $n$-shifted Lagrangian fibration $X\rightarrow B_X$ inducing an $(n-1)$-shifted symplectic structure on $L$ relative to $B_X$.

\item A commutative diagram
\begin{equation}
\xymatrix{
L \ar[d] \ar[r] & X \ar[d] \\
B_L \ar[r] & B_X
}
\label{eq:Lagrangianfibration}
\end{equation}
of derived stacks.

\item An $(n-1)$-shifted Lagrangian fibration structure on $L\rightarrow B_L$ relative to $B_X$.
\end{enumerate}
\end{defn}

\begin{example}
Suppose $X\rightarrow B$ is an $n$-shifted Lagrangian fibration and $L\rightarrow X$ is an $n$-shifted Lagrangian such that the composite $L\rightarrow X\rightarrow B$ is \'{e}tale. In this case we say the Lagrangian $L$ is \defterm{transverse} to the Lagrangian fibration $X\rightarrow B$. Then the diagram
\begin{equation}
\xymatrix{
L \ar[r] \ar[d] & X \ar[d] \\
B \ar@{=}[r] & B
}
\label{eq:transverseLagrangian}
\end{equation}
realizes an $n$-shifted Lagrangian fibration on $L\rightarrow X$. In other words, if the Lagrangian $L$ is transverse to the Lagrangian fibration $X\rightarrow B$, there is a canonical choice of a Lagrangian fibration on $L$.
\label{ex:transverseLagrangian}
\end{example}

\begin{example}
Suppose $Y\rightarrow X$ is a morphism of derived Artin stacks locally of finite presentation. It is shown in \cite{CalaqueCotangent} that
\[
\xymatrix{
\N^*[n]Y \ar[d] \ar[r] & \T^*[n] X \ar[d] \\
Y \ar[r] & X
}
\]
is an $n$-shifted Lagrangian fibration on $\N^*[n]Y\rightarrow \T^*[n] X$.
\label{ex:shiftedconormal}
\end{example}

Let us finally study intersections of Lagrangian fibrations.

\begin{thm}
Let $X\rightarrow B_X$ be an $n$-shifted Lagrangian fibration, $L_1, L_2\rightarrow X$ are two $n$-shifted Lagrangians equipped with Lagrangian fibrations $L_i\rightarrow B_{L_i}$ resulting in the commutative diagram
\[
\xymatrix@R=0cm{
L_1 \ar[dr] \ar[dd] && L_2 \ar[dl] \ar[dd] \\
& X \ar[dd] & \\
B_{L_1} \ar[dr] && B_{L_2} \ar[dl] \\
& B_X &
}
\]
Then $L_1\times_X L_2$ carries an $(n-1)$-shifted symplectic structure together with a Lagrangian fibration
\[L_1\times_X L_2\rightarrow B_{L_1}\times_{B_X} B_{L_2}.\]
\label{thm:Lagrangianfibrationintersection}
\end{thm}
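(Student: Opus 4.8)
The plan is to treat the two assertions separately. The $(n-1)$-shifted symplectic structure on $M := L_1\times_X L_2$ is immediate from the derived Lagrangian intersection theorem of \cite{PTVV}: since $X$ is $n$-shifted symplectic and $f_i\colon L_i\to X$ ($i=1,2$) are $n$-shifted Lagrangians, the derived fiber product $M$ is $(n-1)$-shifted symplectic, and its tangent complex is the limit of $\bT_{L_1}\to \bT_X\leftarrow \bT_{L_2}$ (all pulled back to $M$). This also fixes the form $\omega_M$ with respect to which the fibration must be checked, so it is this absolute $\omega_M$ that enters the condition to be verified below.

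For the fibration I would reduce, via \cref{prop:Lagrangianfibrationequivalent}, to a statement about relative Lagrangians and the fiber sequence \eqref{eq:ndisotropicfibration}. Writing $B := B_{L_1}\times_{B_X} B_{L_2}$ and $\pi\colon M\to B$ for the induced map, I must produce an $(n-1)$-shifted isotropic fibration structure on $\pi$ and show that the corresponding sequence $\bT_{M/B}\to \bT_M\to \bL_{M/B}[n-1]$ is a fiber sequence. The isotropic part is the formal one: the image of $\omega_M$ in $\DR(M/B)$ is trivialized because, by hypothesis, $\omega_M$ restricts to a trivialized form along each $L_i\to B_{L_i}$ relative to $B_X$ and along $X\to B_X$, and these trivializations are compatible over $B = B_{L_1}\times_{B_X} B_{L_2}$; this is handled exactly as in the proof of \cref{prop:Lagrangianfibrationequivalent}. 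The substance is the fiber sequence, which I would establish by identifying all three terms as derived limits over the intersection: $\bT_M$ as above, $\bL_{M/B}$ from the cofiber sequence $\pi^*\bL_B\to \bL_M\to \bL_{M/B}$ together with the fiber-product description of $\bL_B$ coming from $B = B_{L_1}\times_{B_X} B_{L_2}$, and $\bT_{M/B}$ as the fiber of $\bT_M\to \pi^*\bT_B$. The desired sequence then assembles from the fiber sequences supplied by the hypotheses: the $n$-shifted Lagrangian structures on $L_i\to X$, the $n$-shifted Lagrangian fibration $X\to B_X$, and the $(n-1)$-shifted Lagrangian fibrations $L_i\to B_{L_i}$ relative to $B_X$ (the last read off through \cref{prop:fibrationintersection}).

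The main obstacle is the homotopy-coherent bookkeeping required to assemble these sequences, together with the reconciliation of the two different shifts at play: the Lagrangian conditions for $L_i\to X$ live in degree $n$, whereas the fibration conditions for $L_i\to B_{L_i}$ live in degree $n-1$ relative to $B_X$, and it is their interaction that must produce the single degree-$(n-1)$ fiber sequence for $M\to B$. I expect the cleanest way to control this is not to argue on cohomology but at the level of the graded mixed de Rham complexes: all the relevant forms are restrictions of the one closed two-form on the total spaces, so the null-homotopies and the nondegeneracy data can be compared functorially through the maps $\DR(X)\to \DR(X/B_X)$, $\DR(L_i)\to \DR(L_i/B_{L_i})$ and $\DR(M)\to \DR(M/B)$, in the same spirit as \cref{prop:Lagrangianfibrationequivalent,prop:fibrationintersection} and the relative form of the intersection theorem of \cite{PTVV}, which the excerpt records as going through without modification. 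Once the fiber sequence is in place, \cref{prop:Lagrangianfibrationequivalent} repackages it as the asserted Lagrangian fibration $M\to B$.
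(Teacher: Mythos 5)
Your route is genuinely different from the paper's, and it is viable in outline, but the decisive step is deferred rather than proved. The paper never performs the tangent-complex chase you propose: it applies \cite[Theorem 3.1]{BenBassat} to the three relative Lagrangians $L_1\times B_X$, $X$ and $L_2\times B_X$ in $X\times B_X$ (relative to $B_X$) to obtain the $(n-1)$-shifted Lagrangian correspondence \eqref{eq:Lagrangianfibrationcorrespondence} between $L_1\times_X L_2\times B_X$ and $L_1\times_{B_X}L_2$; it then base-changes the relative fibration data on $L_i\rightarrow B_{L_i}$ to a Lagrangian structure on $L_1\times_{B_X}L_2\rightarrow L_1\times_{B_X}L_2\times_{B_X}B$ relative to $B=B_{L_1}\times_{B_X}B_{L_2}$, and concludes by composing the two Lagrangian correspondences, which by \cref{prop:Lagrangianfibrationequivalent} is exactly a Lagrangian fibration structure on $L_1\times_X L_2\rightarrow B$. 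What this packaging buys is precisely the step you label ``homotopy-coherent bookkeeping'': in Ben-Bassat's theorem and in the composition of Lagrangian correspondences, the null-homotopies and the nondegeneracy statements are constructed and verified together, so one never has to re-identify the map $\bT_M\rightarrow\bL_{M/B}[n-1]$ in the sequence \eqref{eq:ndisotropicfibration} with the one induced by a homotopy assembled by hand.

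That re-identification is where your write-up has a genuine gap, in two places. First, the ``formal'' isotropic step is not quite as you state it: $\omega_M$ is not a restriction of $\omega_X$ but the difference of the two null-homotopies $h_i\colon f_i^*\omega_X\sim 0$, so before the fibration data on $L_i\rightarrow B_{L_i}$ can be used at all you must show that in $\DR(M/B_X)$ one has $\omega_M\simeq \omega_{L_1/B_X}|_M-\omega_{L_2/B_X}|_M$, i.e.\ that the two pullbacks to $M$ of the trivialization of $\omega_X$ over $B_X$ cancel coherently; this identity, together with its nondegeneracy refinement, is exactly what \eqref{eq:Lagrangianfibrationcorrespondence} records, and in the paper it is the output of a theorem rather than a formality. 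Second, in the chase for the fiber sequence, identifying $\bT_{M/B}$ as a limit and $\bL_{M/B}$ as a pushout is indeed formal, but the chase proves the theorem only if, under the equivalences $\bT_{L_i/B_{L_i}}\simeq$ (pullback of) $\bL_{B_{L_i}/B_X}[n-1]$ supplied by the relative fibrations on $L_i\rightarrow B_{L_i}$ (via \cref{prop:fibrationintersection}) and $\bT_{X/B_X}\simeq$ (pullback of) $\bL_{B_X}[n]$ supplied by the fibration on $X$, the structure maps $\bT_{L_i/B_{L_i}}\rightarrow\bT_{X/B_X}$ become the connecting maps of the transitivity cofiber sequences for $B_{L_i}\rightarrow B_X$, and that the resulting map $\bT_M\rightarrow\bL_{M/B}[n-1]$ is the one coming from your assembled null-homotopy. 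These compatibilities are provable, but they are the actual mathematical content of the statement, not bookkeeping one can wave at. So to complete your route you should either carry out these two verifications explicitly, or do what the paper does and quote \cite{BenBassat} together with composition of relative Lagrangian correspondences, where they are already built in.
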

\begin{proof}
The $n$-shifted Lagrangian structures on $L_i\rightarrow X$ and the $n$-shifted Lagrangian fibration on $X\rightarrow B_X$ give three $n$-shifted Lagrangian morphisms
\[
\xymatrix{
L_1\times B_X \ar[dr] & X \ar[d] & L_2\times B_X \ar[dl] \\
& X\times B_X
}
\]
relative to $B_X$. Therefore, using \cite[Theorem 3.1]{BenBassat} one obtains an $(n-1)$-shifted Lagrangian structure on the morphism
\[
L_1\times_X L_2\longrightarrow L_1\times_{B_X} L_2\times L_1\times_X L_2
\]
relative to $B_X$. The $(n-1)$-shifted symplectic structure on the right is given by the product of the natural $(n-1)$-shifted symplectic structure on the intersection $L_1\times_X L_2$ and the relative $(n-1)$-shifted symplectic structures on $L_i\rightarrow B_X$ given by \cref{prop:fibrationintersection}. So, it can be presented as an $(n-1)$-shifted Lagrangian correspondence
\begin{equation}
\xymatrix{
& L_1\times_X L_2 \ar[dl] \ar[dr] & \\
L_1\times_X L_2\times B_X && L_1\times_{B_X} L_2
}
\label{eq:Lagrangianfibrationcorrespondence}
\end{equation}
relative to $B_X$.

Denote
\[B = B_{L_1}\times_{B_X} B_{L_2}.\]
The Lagrangian fibration structure on $L_1$ gives an $(n-1)$-shifted Lagrangian structure on the morphism
\[L_1\longrightarrow L_1\times_{B_X} B_{L_1}\]
relative to $B_{L_1}$. Its base change along $B\rightarrow B_{L_1}$ gives an $(n-1)$-shifted Lagrangian structure on the morphism
\[L_1\times_{B_X} B_{L_2}\longrightarrow L_1\times_{B_X} B\]
relative to $B$. Taking the relative fiber product over $B$ with a similar Lagrangian for $L_2$ we obtain an $(n-1)$-shifted Lagrangian structure on the morphism
\[L_1\times_{B_X} L_2\longrightarrow L_1\times_{B_X} L_2\times_{B_X} B\]
relative to $B$. Combining it together with \eqref{eq:Lagrangianfibrationcorrespondence} we obtain a pair of $(n-1)$-shifted Lagrangian correspondences
\[
\xymatrix{
& L_1\times_X L_2\times_{B_X} B \ar[dl] \ar[dr] && L_1\times_{B_X} L_2 \ar[dl] \ar[dr] & \\
L_1\times_X L_2\times B && L_1\times_{B_X} L_2\times_{B_X} B && B
}
\]
relative to $B$. The composition of these two Lagrangian correspondences finally gives an $(n-1)$-shifted Lagrangian structure on the morphism
\[
L_1\times_X L_2\longrightarrow L_1\times_X L_2\times B,
\]
i.e. a Lagrangian fibration structure on $L_1\times_X L_2\rightarrow B$.
\end{proof}

\begin{remark}
In the case the Lagrangians $L_i\rightarrow X$ are transverse to $X\rightarrow B_X$ and we use the natural Lagrangian fibrations on $L_i$ from \cref{ex:transverseLagrangian} we recover \cite[Theorem 3.4]{Grataloup}.
\end{remark}

\subsection{Twisted cotangent bundles}

In this section we slightly generalize \cref{ex:shiftedcotangent,ex:shiftedconormal} to the case of \emph{twisted} shifted cotangent and shifted conormal bundles.

Let $X$ be a derived Artin stack locally of finite presentation and $\alpha$ a closed one-form on $X$ of degree $(n+1)$. Then one has a natural $(n+1)$-shifted isotropic structure on the graph of $\alpha$
\[\Gamma_\alpha\colon X\longrightarrow \T^*[n+1] X.\]
Namely, if we denote by $\lambda$ the Liouville one-form on $\T^*[n+1] X$, then $\Gamma_\alpha^* \lambda = \alpha$ and hence $\Gamma_\alpha^* \ddr \lambda = \ddr\alpha = 0$. Analogously to \cite[Corollary 2.4]{CalaqueCotangent} one can show that this isotropic structure is, in fact, Lagrangian. Thus, we obtain two $(n+1)$-shifted Lagrangian morphisms
\[
\Gamma_0, \Gamma_\alpha\colon X\longrightarrow \T^*[n+1] X,
\]
so their intersection has an $n$-shifted symplectic structure.

\begin{defn}
Let $X$ be a derived Artin stack locally of finite presentation equipped with a closed one-form $\alpha$ of degree $(n+1)$. The \defterm{$n$-shifted $\alpha$-twisted cotangent bundle of $X$} is the fiber product
\[
\xymatrix{
\T_\alpha^*[n] X \ar[r] \ar[d] & X \ar^{\Gamma_0}[d] \\
X \ar^-{\Gamma_\alpha}[r] & \T^*[n+1] X
}
\]
\end{defn}

\begin{remark}
If $\alpha$ is not closed, one can still make sense of the twisted cotangent bundle, but it will no longer carry a shifted symplectic structure.
\end{remark}

\begin{example}
Let $X$ be a smooth scheme and $B$ a closed two-form on $X$ which we may consider as a closed one-form on $X$ of degree $1$. Then $\T_B^* X$ is isomorphic to $\T^* X$ as a scheme with the symplectic structure given by $\omega_{T^* X} + \pi^* B$, where $\pi\colon \T^* X\rightarrow X$ is the natural projection. This is known as a \emph{magnetic deformation} of the cotangent bundle.
\end{example}

\begin{example}
Let $X$ be a smooth scheme and $\alpha\in\H^1(X, \Omega^{\geq 1})$ an arbitrary closed one-form of degree 1. Choose an affine open cover $\{U_i\}$ of $X$. Then $\alpha$ is represented by a collection of closed two-forms $B_i$ on $U_i$ together with a collection of one-forms $\alpha_{ij}$ on the overlaps $U_i\cap U_j$ satisfying $\ddr \alpha_{ij} = B_i - B_j$ and a cocycle condition on the triple intersections. Then $\T_\alpha^* X$ is given by gluing the magnetic cotangent bundles $\T^*_{B_i} U_i$ using $\alpha_{ij}$. We refer to \cite[Section A.1]{BeilinsonKazhdan} for more details on twisted cotangent bundles in the underived setting.
\end{example}

\begin{example}
Suppose $X$ is a smooth scheme and $S\colon X\rightarrow \bA^1$ a function. Then the one-form $\ddr S$ is closed, so we may consider the $(-1)$-shifted twisted cotangent bundle $\T_{\ddr S}^*[-1]X$. This is known as the \emph{derived critical locus} of $S$, see \cite{VezzosidCrit} and \cite[Chapter 4.1]{CostelloGwilliam1}.
\label{ex:dCrit}
\end{example}

\begin{prop}
Let $X$ be a derived Artin stack locally of finite presentation and $\alpha$ a closed one-form of degree $(n+1)$. The projection $\T^*_\alpha[n] X\rightarrow X$ has a natural structure of a Lagrangian fibration.
\label{prop:twistedcotangentfibration}
\end{prop}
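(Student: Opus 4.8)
The plan is to realize the Lagrangian fibration structure on $\T^*_\alpha[n]X \to X$ as an intersection of Lagrangian fibrations, applying \cref{thm:Lagrangianfibrationintersection} to the defining fiber square of the twisted cotangent bundle. Recall that by definition $\T^*_\alpha[n]X$ is the derived intersection of the two $(n+1)$-shifted Lagrangians $\Gamma_0, \Gamma_\alpha \colon X \to \T^*[n+1]X$. The ambient $(n+1)$-shifted symplectic stack $\T^*[n+1]X$ carries the canonical Lagrangian fibration $\T^*[n+1]X \to X$ of \cref{ex:shiftedcotangent}. Thus the strategy is to equip each of the two Lagrangians $\Gamma_0, \Gamma_\alpha$ with an $(n+1)$-shifted Lagrangian fibration relative to this base fibration, and then read off the induced fibration on the intersection.

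First I would observe that both graphs $\Gamma_0$ and $\Gamma_\alpha$ are sections of $\T^*[n+1]X \to X$, hence the composites $X \xrightarrow{\Gamma_0} \T^*[n+1]X \to X$ and $X \xrightarrow{\Gamma_\alpha} \T^*[n+1]X \to X$ are both the identity (in particular étale). This is precisely the transversality condition of \cref{ex:transverseLagrangian}: each graph is transverse to the cotangent fibration. Consequently \cref{ex:transverseLagrangian} supplies a canonical $(n+1)$-shifted Lagrangian fibration on each of $\Gamma_0$ and $\Gamma_\alpha$, namely the one given by the diagram with $B_{L_1} = B_{L_2} = X$ and the identity map down to the base $B_X = X$. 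With this data in place, \cref{thm:Lagrangianfibrationintersection} applies directly to the correspondence $X \xleftarrow{\Gamma_0} \T^*[n+1]X \xrightarrow{\Gamma_\alpha} X$ (with $L_1 = L_2 = X$, ambient stack $\T^*[n+1]X$, shift $n+1$), producing an $n$-shifted Lagrangian fibration on the intersection
\[
\T^*_\alpha[n]X = X \times_{\T^*[n+1]X} X \longrightarrow X \times_X X = X.
\]
The base of this fibration simplifies to $B_{L_1} \times_{B_X} B_{L_2} = X \times_X X \cong X$, and one checks that the resulting map is the natural projection $\T^*_\alpha[n]X \to X$.

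The only genuine content to verify is that the abstract base $X \times_X X$ appearing in \cref{thm:Lagrangianfibrationintersection} and the map to it coincide with the honest projection $\T^*_\alpha[n]X \to X$ of the definition; this is a routine identification of the induced map, since all the auxiliary bases collapse to $X$ and all the structure maps to them are identities. I expect \textbf{the main (and essentially only) obstacle} to be a compatibility check rather than a substantive argument: one must confirm that the relative symplectic and Lagrangian fibration data produced by \cref{thm:Lagrangianfibrationintersection} is compatible with the specific $n$-shifted symplectic structure on $\T^*_\alpha[n]X$ fixed by its defining fiber square, i.e. that the symplectic form induced on the intersection by the theorem agrees with the one used to define the twisted cotangent bundle. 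Since both arise from the same Lagrangian intersection in $\T^*[n+1]X$, this is a matter of tracing through the naturality of the PTVV intersection construction, and no new ideas are needed beyond unwinding \cref{ex:transverseLagrangian,thm:Lagrangianfibrationintersection}.
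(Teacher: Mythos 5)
Your proposal is correct and is essentially the paper's own proof: the paper likewise notes that $\Gamma_0, \Gamma_\alpha$ are transverse to the Lagrangian fibration $\T^*[n+1]X\to X$ in the sense of \cref{ex:transverseLagrangian} and then applies \cref{thm:Lagrangianfibrationintersection} to the defining intersection. The final compatibility check you flag is in fact automatic, since the $n$-shifted symplectic structure on $\T^*_\alpha[n]X$ is \emph{defined} via that same Lagrangian intersection.
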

\begin{proof}
The projection $\T^*[n+1] X\rightarrow X$ has a natural structure of a Lagrangian fibration. The graphs $\Gamma_\alpha, \Gamma_0\colon X\rightarrow \T^*[n+1] X$ are transverse to the Lagrangian fibration. In particular, they carry an obvious Lagrangian fibration. Therefore, by \cref{thm:Lagrangianfibrationintersection} their intersection carries a natural Lagrangian fibration.
\end{proof}

Let us now consider a relative situation. Let $f\colon Y\rightarrow X$ be a morphism of derived stacks, $\alpha$ a closed one-form of degree $(n+1)$ on $X$ and $h\colon f^*\alpha\sim 0$ is a nullhomotopy. We have three Lagrangians $\Gamma_0\colon X\rightarrow \T^*[n+1]X$, $\Gamma_\alpha\colon X\rightarrow \T^*[n+1] X$ and $\N^*[n+1] Y\rightarrow \T^*[n+1] X$. We may identify \[\Gamma_\alpha\times_{\T^*[n+1] X} \N^*[n+1] Y\cong \T^*_{f^*\alpha} Y.\]
From \cite[Theorem 3.1]{BenBassat} we conclude that
\[
\xymatrix{
& \Gamma_0\times_{\T^*[n+1] X} \N^*[n+1] Y\times_{\T^*[n+1] X} \Gamma_\alpha \ar[dl] \ar[dr] & \\
\T^*_\alpha[n] X && \T^*[n] Y\times \T^*_{f^*\alpha}[n] Y
}
\]
is an $n$-shifted Lagrangian correspondence. Using the homotopy $h$, the diagonal map of $\T^*[n] Y$ carries a Lagrangian structure. Intersecting it with the above morphism we obtain an $n$-shifted Lagrangian morphism
\[\N^*_\alpha[n] Y\longrightarrow \T^*_\alpha[n] X.\]

\begin{defn}
Let $f\colon Y\rightarrow X$ be a morphism of derived Artin stacks locally of finite presentation, $\alpha$ a closed one-form of degree $(n+1)$ on $X$ and $f^*\alpha\sim 0$ a nullhomotopy on $Y$. The \defterm{$n$-shifted $\alpha$-twisted conormal bundle} of $f$ is the morphism
\[\N^*_\alpha[n]Y \longrightarrow \T^*_\alpha[n] X.\]
\end{defn}

Analogously to \cref{prop:twistedcotangentfibration}, but with more work, we expect one can prove that the diagram
\[
\xymatrix{
\N^*_\alpha[n] Y \ar[r] \ar[d] & \T^*_\alpha[n] X \ar[d] \\
Y \ar[r] & X
}
\]
defines a Lagrangian fibration on the $n$-shifted twisted conormal bundle.

\subsection{Spaces of locally-constant maps}
\label{sect:deRham}

Let $C$ be a smooth scheme. Gaitsgory and Rozenblyum \cite[Chapter 8]{GaitsgoryRozenblyum2} define an $\infty$-category $\LieAlgbroid(C)$ of Lie algebroids on $C$, so that any Lie algebroid $\cL$ on $C$ defines a formal moduli problem $C\rightarrow [C/\cL]$. Moreover, there is a natural free-forgetful adjunction
\[
\xymatrix{
\free\colon \QCoh(C)_{/\T_C} \ar@<.5ex>[r] & \LieAlgbroid(C)\colon \oblv \ar@<.5ex>[l]
}
\]
where $\QCoh(C)/_{/\T_C}$ is the $\infty$-category of quasi-coherent sheaves $\cE$ equipped with an anchor map $\cE\rightarrow \T_C$.

\begin{lm}
Suppose $\cL$ is a Lie algebroid on $C$ which is a line bundle concentrated in degree $0$. Then the natural morphism
\[\free(\oblv(\cL))\longrightarrow \cL\]
is an isomorphism.
\label{lm:free1dimLieAlgebroid}
\end{lm}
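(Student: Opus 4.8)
The plan is to show the counit $\free(\oblv(\cL))\to\cL$ becomes an equivalence after applying $\oblv$; since $\oblv$ is monadic, hence conservative, this suffices. Being an equivalence of quasi-coherent sheaves is local on $C$, so I would first trivialize $\cL\cong\cO_C\cdot\xi$ and record the anchor as a single vector field $v=a(\xi)\in\T_C$. Antisymmetry forces $[\xi,\xi]=0$, and then the Leibniz rule determines the whole bracket, $[f\xi,g\xi]=(f\,v(g)-g\,v(f))\xi$; thus the entire Lie algebroid structure on $\cL$ is encoded by the anchor alone. This already makes the statement plausible: a morphism of anchored modules out of a line bundle automatically respects this bracket, so $\cL$ satisfies the universal property of the free Lie algebroid on $\oblv(\cL)$ at the level of $\pi_0$. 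The real task is to upgrade this to a statement about the full mapping spaces.

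The main tool will be a Poincar\'{e}--Birkhoff--Witt-type filtration on the free Lie algebroid. From the Gaitsgory--Rozenblyum theory recalled above, $\oblv\free(\cE)$ carries an exhaustive filtration whose associated graded is the free $\cO_C$-linear Lie algebra on the underlying module,
\[\mathrm{gr}\,\oblv\free(\cE)\;\simeq\;\bigoplus_{n\geq 1}\Lie(n)\otimes_{\Sigma_n}\oblv(\cE)^{\otimes_{\cO_C} n},\]
where $\Lie(n)$ is the arity-$n$ component of the Lie operad. The essential point is that the anchor enters only through the lower-order Leibniz terms and therefore does not survive to the associated graded: on $\mathrm{gr}$ the bracket is $\cO_C$-linear.

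For $\cE=\oblv(\cL)$ a line bundle placed in degree $0$, each tensor power $\oblv(\cL)^{\otimes_{\cO_C}n}=\cL^{\otimes n}$ is again a line bundle in degree $0$, and $\Sigma_n$ acts on it through the identity. Working in characteristic zero, $\Lie(n)\otimes_{\Sigma_n}(-)$ then extracts the trivial-isotypic part of $\Lie(n)$, which vanishes for $n\geq 2$; concretely $\Lambda^2_{\cO_C}\cL=0$ already kills weight $2$, and all higher weights with it. Hence $\mathrm{gr}\,\oblv\free(\oblv(\cL))$ is concentrated in weight $1$ and equals $\cL$. Since the counit is filtered and restricts to the identity of $\cL$ on the weight-$1$ piece, while every higher graded piece vanishes, exhaustiveness of the filtration shows that $\free(\oblv(\cL))\to\cL$ is an equivalence on underlying objects, and therefore an equivalence of Lie algebroids.

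The hard part will be the second paragraph: pinning down the PBW filtration on the free Lie algebroid in the derived, $\infty$-categorical framework and verifying that the anchor genuinely drops out of its associated graded. Once that symbol calculation is in hand, the rank-one vanishing in the last paragraph is formal, being exactly the statement that the free Lie algebra on a one-dimensional generator is abelian.
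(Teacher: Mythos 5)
Your proposal is correct and takes essentially the same route as the paper's proof: reduce via conservativity of $\oblv$ to a statement about underlying objects, invoke the filtration on the monad $\oblv\circ\free$ whose associated graded is the free $\cO_C$-linear Lie algebra functor, and observe that in characteristic zero the free Lie algebra on a line bundle concentrated in degree $0$ is the line bundle itself. The ``hard part'' you flag at the end is precisely \cite[Proposition 5.3.2]{GaitsgoryRozenblyum1}, which the paper cites as a black box, so no further work is required there.
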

\begin{proof}
The forgetful functor $\oblv$ is conservative, so it is enough to show that
\[\oblv(\free(\oblv(\cL)))\longrightarrow \oblv(\cL)\]
is an isomorphism. By the adjunction axioms it is equivalent to proving that the unit of the monad $\top=\oblv\circ\free$ induces an isomorphism
\[\oblv(\cL)\longrightarrow \top(\oblv(\cL)).\]
For this let us recall from \cite[Proposition 5.3.2]{GaitsgoryRozenblyum1} that the monad $\top$ admits a nonnegative filtration whose associated graded is $\oblv_{\Lie}\circ\free_{\Lie}$, where $\free_{\Lie}\dashv \oblv_{\Lie}$ is the free-forgetful adjunction for Lie algebras. But since $\cL$ is a line bundle, $\cL\rightarrow \oblv_{\Lie}(\free_{\Lie}(\cL))$ is an isomorphism.
\end{proof}

Recall the notion of the de Rham stack (see e.g. \cite[Definition 2.1.3]{CPTVV}) $C_{\dR}$, which can be identified as
\[C_{\dR} \cong [C/\T_C].\]

Suppose $C$ is a smooth and proper curve. As in \cite[Section 2.1]{PTVV}, for any derived stack $Y$ one can construct a graded mixed morphism
\[\DR(Y\times C)\longrightarrow \DR(Y)\otimes \DR(C).\]
The Serre duality provides an integration map
\[\int_C\colon \DR(C)\longrightarrow k(1)[-1],\]
which is a map of graded mixed complexes. If $(X, \omega)$ is an $n$-shifted symplectic stack, we denote by $\int_C\ev^*\omega$ the closed one-form of degree $(n-1)$ on $\Map(C, X)$ obtained as the image of $\omega$ under
\[\DR(X)\xrightarrow{\ev^*}\DR(\Map(C, X)\times C)\longrightarrow \DR(\Map(C, X))\otimes \DR(C)\longrightarrow \DR(\Map(C, X))\otimes k(1)[-1].\]

\begin{prop}
Let $C$ be a smooth and proper curve and $(X, \omega)$ an $n$-shifted symplectic stack. One has an isomorphism of derived stacks
\[\Map(C_{\dR}, X)\cong \T^*_{\int_C \ev^*\omega}[n-2]\Map(C, X)\]
compatible with the natural projections to $\Map(C, X)$.
\label{prop:deRhamMap}
\end{prop}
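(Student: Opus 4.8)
The plan is to realize both sides as the total space of a torsor over $\Map(C,X)$ under the $(n-2)$-shifted cotangent bundle $\T^*[n-2]\Map(C,X)$, and then to show that these two torsors carry the same classifying class, namely $\int_C\ev^*\omega$. The projection is supplied by restriction along the canonical map $C\to C_{\dR}$, which is manifestly compatible with the projection of the twisted cotangent bundle, so all of the content lies in identifying the fibers and the twist.

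First I would pin down the relative tangent complex of $\pi\colon\Map(C_{\dR},X)\to\Map(C,X)$. Using the standard identification $\T_\phi\Map(C,X)\cong\Gamma(C,\phi^*\T_X)$ and, over a locally-constant map $\psi$ lying above $\phi$, the de Rham description $\T_\psi\Map(C_{\dR},X)\cong\Gamma_{\dR}(C,\phi^*\T_X)$, the relative tangent complex is the fiber of the restriction map $\Gamma_{\dR}(C,E)\to\Gamma(C,E)$ for $E=\phi^*\T_X$. The stupid filtration of the de Rham complex $[\,E\to E\otimes\Omega^1_C\,]$ identifies this fiber with $\Gamma(C,E\otimes\Omega^1_C)[-1]$. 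I then invoke the two pieces of structure that make the shift come out right: the $n$-shifted symplectic isomorphism $\omega^\sharp\colon\T_X\xrightarrow{\sim}\bL_X[n]$ turns this into $\Gamma(C,\phi^*\bL_X\otimes\Omega^1_C)[n-1]$, while Serre duality on the smooth proper curve ($\omega_C\cong\Omega^1_C$) gives $\bL_{\Map(C,X)}\cong\Gamma(C,\phi^*\bL_X\otimes\Omega^1_C)[1]$, so the relative tangent complex of $\T^*[n-2]\Map(C,X)\to\Map(C,X)$, which is $\pi^*\bL_{\Map(C,X)}[n-2]$, agrees with the one just computed. This matches the two stacks to first order over $\Map(C,X)$ and fixes the shift $[n-2]$.

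To upgrade this infinitesimal match to an actual isomorphism of torsors, I would use \cref{lm:free1dimLieAlgebroid}: since $C$ is a curve the tangent Lie algebroid $\T_C$ is a line bundle and hence free on the anchor $\id\colon\T_C\to\T_C$, so under $C_{\dR}\cong[C/\T_C]$ the flat-connection data parametrizing a lift of $\phi$ to $C_{\dR}$ is governed by $\Map_{\QCoh(C)_{/\T_C}}(\oblv(\T_C),-)$ with no higher constraints. This linearizes the fiber and exhibits $\pi$ as an affine bundle, i.e.\ a torsor under $\T^*[n-2]\Map(C,X)$; the torsor (rather than trivial-bundle) nature is precisely what carries the twist. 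The remaining and hardest step is to identify the classifying class of this torsor with $\int_C\ev^*\omega$: concretely, I would produce the tautological map $\Map(C_{\dR},X)\to\T^*[n-1]\Map(C,X)$ and check that it lands on the graph $\Gamma_{\int_C\ev^*\omega}$, thereby realizing $\Map(C_{\dR},X)$ as the fiber product $\Map(C,X)\times_{\T^*[n-1]\Map(C,X)}\Map(C,X)$ that defines the twisted cotangent bundle. The obstruction to a flat extension of $\phi$ is exactly the transgression of $\omega$ along $C$, so the class should come out as $\int_C\ev^*\omega$ by naturality of the integration map $\int_C\colon\DR(C)\to k(1)[-1]$; making this naturality precise, so that the twist matches on the nose and not merely up to an automorphism of the cotangent torsor, is where I expect the real work to lie.
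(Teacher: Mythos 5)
Your proposal is correct and follows essentially the same route as the paper: the tangent identification via Serre duality and $\omega^\sharp$, the use of \cref{lm:free1dimLieAlgebroid} to linearize lifts of $\phi$ to $C_{\dR}$ (the paper phrases this as $C_{\dR}\cong\RealSqZ(\T_C\xrightarrow{\id}\T_C)$ together with the Gaitsgory--Rozenblyum lemma on maps out of square-zero extensions), and the final identification of the resulting section of $\T^*[n-1]\Map(C,X)$ with the underlying one-form of $\int_C\ev^*\omega$. Your torsor packaging is cosmetic, since your last step reverts to exactly the paper's realization of $\Map(C_{\dR},X)$ as the intersection of the graph of $\omega\circ\ddr f$ with the zero section, and the paper likewise leaves the identification of that one-form with $\int_C\ev^*\omega$ as ``easy to see.''
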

\begin{proof}
For any map of derived stacks $f\colon C\rightarrow X$ the differential defines an element
\[\ddr f\in\Gamma(C, f^*\bT_X\otimes \T^*_C).\]
There is an equivalence
\[\bT_f\Map(C, X)\cong\Gamma(C, f^*\bT_X),\]
so by Serre duality
\[\bL_f\Map(C, X)\cong \Gamma(C, f^*\bL_X\otimes \T^*_C)[1].\]
In particular, the composition $\omega\circ\ddr f$ defines a morphism of derived stacks
\[\omega\circ\ddr f\colon \Map(C, X)\longrightarrow \T^*[n-1]\Map(C, X).\]
By \cref{lm:free1dimLieAlgebroid} we have an isomorphism
\[C_{\dR}\cong \RealSqZ(\T_C\xrightarrow{\id}\T_C),\]
where $\RealSqZ$ is the formal moduli problem under $C$ introduced in \cite[Chapter 8, Section 5.1]{GaitsgoryRozenblyum2}. Therefore, by \cite[Chapter 8, Lemma 5.1.3]{GaitsgoryRozenblyum2} the derived stack $\Map(C_{\dR}, X)$ is isomorphic to the intersection of $\omega\circ \ddr f$ and the zero section. It is then easy to see that $\omega\circ \ddr f$ coincides with the underlying one-form of $\int_C \ev^*\omega$.
\end{proof}

Note that by \cite[Theorem 2.5]{PTVV} the mapping stack $\Map(C_{\dR}, X)$ has an $(n-2)$-shifted symplectic structure.

\begin{conjecture}
The isomorphism
\[\Map(C_{\dR}, X)\cong \T^*_{\int_C\ev^*\omega}[n-2] \Map(C, X)\]
constructed in \cref{prop:deRhamMap} is compatible with the $(n-2)$-shifted symplectic structures on both sides.
\end{conjecture}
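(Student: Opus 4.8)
The plan is to realize both $(n-2)$-shifted symplectic structures as induced by one and the same Lagrangian intersection inside $\T^*[n-1]\Map(C,X)$, so that the isomorphism of \cref{prop:deRhamMap} becomes compatible with them essentially by construction. Write $M=\Map(C,X)$ and $\alpha=\int_C\ev^*\omega$, a closed one-form of degree $(n-1)$ on $M$. By the definition of the twisted cotangent bundle, the $(n-2)$-shifted symplectic structure on $\T^*_\alpha[n-2]M$ is exactly the one that \cite[Theorem 2.9]{PTVV} produces on the derived intersection of the two $(n-1)$-shifted Lagrangians $\Gamma_0,\Gamma_\alpha\colon M\rightarrow \T^*[n-1]M$ inside the canonical $(n-1)$-shifted symplectic stack $\T^*[n-1]M$. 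So it suffices to identify the AKSZ symplectic structure of \cite[Theorem 2.5]{PTVV} on $\Map(C_{\dR},X)$ with the symplectic structure induced by this same Lagrangian intersection.

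First I would promote the identification of underlying one-forms in the proof of \cref{prop:deRhamMap} to an identification of closed one-forms. That proof realizes $\Map(C_{\dR},X)$ as the intersection of the zero section with the section $\omega\circ\ddr f$ of $\T^*[n-1]M$ and observes that this section has the same underlying one-form as $\alpha$. I would show that the isotropic (in fact Lagrangian) structure carried by $\omega\circ\ddr f$ through the realization $C_{\dR}\cong\RealSqZ(\T_C\xrightarrow{\id}\T_C)$ agrees, as an element of $\cA^{1,\cl}(M,n-1)$, with the canonical closed structure on the graph $\Gamma_\alpha$. Concretely this is the statement that the closedness datum of $\int_C\ev^*\omega$ — obtained by integrating the nullhomotopy $\ddr\omega\sim 0$ over $C$ through the graded mixed map $\int_C\colon \DR(C)\rightarrow k(1)[-1]$ — is the transgression of the closedness of $\omega$, which is formal from $\int_C$ being a morphism of graded mixed complexes.

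The crux is to match the AKSZ form itself with the Lagrangian-intersection form. Here I would use that the $2$-orientation on $C_{\dR}$ underlying the AKSZ construction factors through the Serre-duality $1$-orientation of $C$: the integration maps sit in a diagram
\[
\DR(C_{\dR})\longrightarrow \DR(C)\xrightarrow{\ \int_C\ }k(1)[-1],
\]
and the $2$-orientation $\int_{C_{\dR}}$ is recovered from $\int_C$ by the shift accounting for the de Rham differential on $C$ (equivalently, the passage from the coherent cohomology of $\omega_C$ to the hypercohomology of $\Omega^\bullet_C$ raises the cohomological dimension by one, placing the de Rham fundamental class on $\H^1(C,\Omega^1_C)$, the target of $\int_C$). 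Granting this factorization, the transgression of $\omega$ against $\int_{C_{\dR}}$ computes through $\int_C$, and thus reproduces $\alpha$ paired against the canonical Liouville form on $\T^*[n-1]M$; this is precisely the intersection form of $\Gamma_0$ and $\Gamma_\alpha$. I would phrase this functorially as the assertion that $\Map(-,X)$ sends the canonical relative orientation on $C\rightarrow C_{\dR}$ to a composition of $(n-1)$-shifted Lagrangian correspondences in the sense of \cite[Theorem 3.1]{BenBassat} whose composite is exactly $\Gamma_0\times_{\T^*[n-1]M}\Gamma_\alpha$.

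I expect the main obstacle to be exactly this last matching at the level of closed forms rather than underlying forms: \cref{prop:deRhamMap} only asserts agreement of underlying one-forms, and promoting it demands tracking the full graded mixed structure of $\DR(C_{\dR})$ through the realization $C_{\dR}\cong\RealSqZ(\T_C\xrightarrow{\id}\T_C)$ and comparing the resulting $2$-orientation with $\int_C$ as a map of graded mixed complexes, rather than merely of underlying complexes. A robust way to discharge this would be to reduce to the universal case $X=\T^*[n]Y$ with $\omega$ the canonical form, where every transgression is computed explicitly and the comparison of orientations becomes a direct calculation in the de Rham cohomology of $C$; the general case would then follow by naturality of both the AKSZ transgression and the twisted cotangent construction in $(X,\omega)$.
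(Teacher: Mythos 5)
The statement you were asked to prove is stated in the paper as a \emph{conjecture}: the paper offers no proof, and immediately after stating it remarks that checking the agreement of the underlying two-forms is straightforward while ``the difficult part is in comparing the closure data.'' Your proposal correctly locates exactly this difficulty, and your framing --- realize both sides as the intersection of $\Gamma_0$ and $\Gamma_{\alpha}$ inside $\T^*[n-1]\Map(C,X)$, and try to factor the AKSZ transgression through the Serre-duality integration $\int_C\colon \DR(C)\rightarrow k(1)[-1]$ --- is a sensible way to organize an attack. But the proposal is a strategy, not a proof: the pivotal step is introduced with ``Granting this factorization,'' and that factorization of the $2$-orientation on $C_{\dR}$ through $\int_C$, \emph{as a statement about graded mixed structures rather than underlying complexes}, is precisely the content of the conjecture. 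Nothing in your argument discharges it; you have reformulated the problem, accurately, and then assumed the reformulation.

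The fallback you offer for closing this gap also fails. You propose to verify the comparison in a ``universal case'' $X=\T^*[n]Y$ and deduce the general case ``by naturality of both the AKSZ transgression and the twisted cotangent construction in $(X,\omega)$.'' There is no such universal case: an arbitrary $n$-shifted symplectic stack $(X,\omega)$ does not admit a map to (or from) a shifted cotangent bundle carrying the canonical symplectic structure to $\omega$. Closed two-forms of degree $n$ are classified by maps $X\rightarrow \cA^{2,\cl}(n)$, but $\cA^{2,\cl}(n)$ is not symplectic and nondegeneracy is not a condition one can impose universally, so there is no object in which the cotangent case is cofinal or initial among symplectic stacks. Naturality of the two transgressions in morphisms of pairs $(X,\omega)$ is real, but the class of pairs reachable from $(\T^*[n]Y,\ddr\lambda)$ by such morphisms does not exhaust all shifted symplectic stacks, so the reduction does not go through. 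What would actually be needed is a direct comparison, over an arbitrary $(X,\omega)$, of the full graded mixed functoriality of $\DR(\Map(C_{\dR},X))$ obtained from the realization $C_{\dR}\cong \RealSqZ(\T_C\xrightarrow{\id}\T_C)$ with the closure data of the Lagrangian-intersection form; this is open, and your proposal leaves it open.
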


Note that it is straightforward to check that the underlying two-forms on both sides coincide and the difficult part is in comparing the closure data.

\begin{example}
Let $G$ be a reductive algebraic group equipped with a nondegenerate invariant symmetric bilinear pairing $\langle-, -\rangle$ on its Lie algebra. Then $\B G$ carries a canonical 2-shifted symplectic structure $\omega$. The space
\[\LocSys_G(C) = \Map(C_{\dR}, \B G)\]
is the moduli stack of principal $G$-bundles with a flat connection on $C$. If $\langle-,-\rangle$ is the Killing form, by the Grothendieck--Riemann--Roch theorem it is easy to see that the closed one-form of degree $1$ $\int_C\ev^*\omega$ on the moduli stack $\Bun_G(C) = \Map(C, \B G)$ coincides with the first Chern class of the determinant line bundle on $\Bun_G(C)$. \Cref{prop:deRhamMap} identifies
\[\LocSys_G(C)\cong \T^*_{\int_C\ev^*\omega} \Bun_G(C).\]
See \cite[Proposition 4.1.4]{BenZviFrenkel} for a related statement.
\end{example}

\section{Prequantizations}

\subsection{Gerbes with connections}

In this section we introduce gerbes with connective structures on derived stacks. We refer to \cite{Brylinski,Gajer,BreenMessing} for the classical theory. The reference \cite{FRS} deals with higher gerbes in the smooth setting and \cite{Wallbridge} promotes essentially the same definition to the setting of derived stacks.

Let $\cA^p(n)$ and $\cA^{p, \cl}(n)$ be the derived stacks of $p$-forms of degree $n$ and closed $p$-forms of degree $n$ introduced in \cite{PTVV}. By construction we have equivalences
\[\cA^{1, \cl}(n)\cong \Omega\cA^{1, \cl}(n+1),\qquad \cA^1(n)\cong \Omega \cA^1(n+1)\]
for any $n\in\Z$. By adjunction we get maps $\B\cA^1(n)\rightarrow \cA^1(n+1)$.

\begin{lm}
Suppose $n\geq 0$. Then the map $\B\cA^1(n)\rightarrow \cA^1(n+1)$ is an equivalence.
\end{lm}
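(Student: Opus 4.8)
The plan is to identify the map $\B\cA^1(n)\rightarrow \cA^1(n+1)$ as the counit of the delooping–looping adjunction in the $\infty$-topos of derived stacks and to reduce the claim to the connectivity of the cotangent complex. First I would record the explicit description of the objects. For a connective $A\in\CAlg^{\leq 0}$, the weight-$1$ part of $\DR(A)$ is $\bL_A[-1]$, so $\cA^1(m)(\Spec A)$ is the Dold–Kan space of $\tau^{\leq 0}(\bL_A[m])$. Hence each $\cA^1(m)$ is an abelian group stack whose homotopy sheaves are the sheafifications of $A\mapsto \H^{m-i}(\bL_A)$ for $i\geq 0$; in particular $\pi_0(\cA^1(m)(\Spec A)) = \H^m(\bL_A)$.

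Using the given equivalence $\cA^1(n)\cong \Omega\cA^1(n+1)$, the map in question is the counit $\B\Omega\cA^1(n+1)\rightarrow \cA^1(n+1)$ of the adjunction $\B\dashv\Omega$ between group objects and pointed objects, where $\B$ denotes the sheafy classifying stack (the sheafification of the objectwise bar construction). I would then invoke the recognition theorem that in any $\infty$-topos the functors $\B$ and $\Omega$ restrict to mutually inverse equivalences between group objects and pointed connected objects. Consequently the counit $\B\Omega Y\rightarrow Y$ is an equivalence precisely when $Y$ is connected, i.e. when $\pi_0(Y)=\ast$ as a sheaf. This reduces the lemma to showing that $\cA^1(n+1)$ is a connected stack when $n\geq 0$.

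The remaining step verifies this connectivity. The sheaf $\pi_0(\cA^1(n+1))$ is the sheafification of $A\mapsto \H^{n+1}(\bL_A)$. The key input is that for a connective cdga $A$ the cotangent complex $\bL_A$ is itself connective, so $\H^j(\bL_A)=0$ for all $j>0$. Since $n\geq 0$ forces $n+1\geq 1$, we obtain $\H^{n+1}(\bL_A)=0$ for every $A$, so $\pi_0(\cA^1(n+1))$ vanishes already at the level of presheaves and \emph{a fortiori} as a sheaf. Therefore $\cA^1(n+1)$ is connected and the counit is an equivalence.

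The main obstacle is conceptual rather than computational: one must take care that $\B$ is formed in the topos of stacks, where sheafification can \emph{a priori} alter $\pi_0$. The resolution is that the higher homotopy sheaves of $\B\cA^1(n)$ and $\cA^1(n+1)$ agree automatically, since $\pi_i\B\cA^1(n)\cong\pi_{i-1}\cA^1(n)\cong\pi_{i-1}\Omega\cA^1(n+1)\cong\pi_i\cA^1(n+1)$ for $i\geq 1$ by the looping relation; the only possible discrepancy sits in $\pi_0$. That discrepancy is governed exactly by $\H^{n+1}(\bL_A)$, whose vanishing—equivalently, the connectivity of $\bL_A$—is precisely the content of the hypothesis $n\geq 0$, and it explains why the statement fails for $n<0$, where $\H^{n+1}(\bL_A)$ can be nonzero.
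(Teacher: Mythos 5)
Your proof is correct and takes essentially the same route as the paper: the paper checks on each affine $S=\Spec A$ that the map is essentially surjective (because one-forms of positive cohomological degree vanish on affines, i.e.\ $\bL_A$ is connective) and fully faithful (because $\cA^1(n)\cong\Omega\cA^1(n+1)$), which is exactly your decomposition into connectivity of $\cA^1(n+1)$ plus the statement that the counit $\B\Omega Y\rightarrow Y$ is an equivalence onto the connected component of the basepoint. The only difference is packaging: you invoke the abstract equivalence between group objects and pointed connected objects in an $\infty$-topos where the paper verifies the two conditions directly.
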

\begin{proof}
Suppose $S$ is an affine derived scheme. Any one-form on $S$ of positive cohomological degree is zero, so $\Map(S, \B\cA^1(n))\rightarrow \Map(S, \cA^1(n+1))$ is essentially surjective. It is fully faithful since $\cA^1(n)\rightarrow \Omega \cA^1(n+1)$ is an equivalence.
\end{proof}

Iterating the previous lemma, we obtain that $\B^n\cA^1(0)\rightarrow\cA^1(n)$ is an equivalence.

\begin{remark}
The map $\B^n\cA^{1, \cl}(0)\rightarrow \cA^{1, \cl}(n)$ is not an equivalence for $n>0$. Indeed, the inclusion $\cA^{n+1, \cl}(0)\rightarrow \cA^{1, \cl}(n)$ is an equivalence. However, the closed two-form $(xy)^{-1}\ddr x\wedge \ddr y$ on $\Gm\times \Gm$ is not \'{e}tale-locally exact.
\end{remark}

We have a morphism of abelian group stacks
\[\dlog\colon \Gm\longrightarrow \cA^{1, \cl}(0)\]
which on a commutative dg algebra $R$ sends an invertible element $f\in R^\times$ to $(\ddr f) / f$. 

Delooping, we get a morphism of abelian group stacks
\[c_1\colon \B^n \Gm\longrightarrow \B^n \cA^{1, \cl}(0)\longrightarrow \cA^{1, \cl}(n).\]
Post-composing the above map with the forgetful map $\cA^{1, \cl}(n)\rightarrow \cA^1(n)$ we get a map $\B^n\Gm\rightarrow \cA^1(n)$.

\begin{defn}
Let $n\geq 0$ and $X$ a derived stack.
\begin{itemize}
\item An \defterm{$n$-gerbe} $\cG$ on $X$ is a map $X\rightarrow\B^{n+1}\Gm$.
\item An \defterm{$n$-gerbe with a connective structure} $(\cG, \nabla)$ on $X$ is an $n$-gerbe $\cG$ on $X$ together with a nullhomotopy $\nabla$ of $c_1(\cG)\in\cA^1(X, n+1)$.
\item An \defterm{$n$-gerbe with a flat connection} $(\cG, \nabla)$ on $X$ is an $n$-gerbe $\cG$ on $X$ together with a nullhomotopy $\nabla$ of $c_1(\cG)\in\cA^{1, \cl}(X, n+1)$.
\end{itemize} 
\end{defn}

Consider the diagram
\[
\xymatrix{
& \cA^{2, \cl}(n) \ar[r] \ar[d] & 0 \ar[d] \\
\B^{n+1}\Gm \ar^{c_1}[r] & \cA^{1, \cl}(n+1) \ar[r] & \cA^1(n+1)
}
\]
of abelian group stacks. The bottom-right square is a pullback, so any gerbe $(\cG, \nabla)$ with a connective structure on $X$ has a \defterm{curvature} $\curv(\cG, \nabla)\in\cA^{2, cl}(X, n)$.

\begin{example}
The stack $\B\Gm$ classifies line bundles and the morphism $c_1\colon \B\Gm\rightarrow \cA^{1, \cl}(1)$ corresponds to the first Chern class of the line bundle. A connective structure $(\cL, \nabla)$ corresponds to the usual notion of a connection on a line bundle and its curvature corresponds to the usual notion of the curvature.
\end{example}

\begin{example}
The stack $\B^2\Gm$ classifies $\Gm$-gerbes. The morphism $c_1\colon \B^2\Gm\rightarrow \cA^{1, \cl}(2)$ is the usual characteristic class of a gerbe obstructing the existence of a connective structure. For a gerbe $(\cG, \nabla)$ with a connective structure over $X$, the curvature $\curv(\cG, \nabla)\in\cA^{2, \cl}(X, 1)$ is the obstruction of $(\cG, \nabla)$ to admit a curving.
\end{example}

For a morphism $\pi\colon X\rightarrow B$ of derived prestacks we may also talk about connective structures and flat connections on gerbes relative to $\pi$. Namely, for an $n$-gerbe $\cG$ on $X$, a flat connection along $\pi$ is a trivialization of the image of $c_1(\cG)$ under $\cA^{1, \cl}(X, n+1)\rightarrow \cA^{1, \cl}(X/B, n+1)$.

\begin{example}
Suppose $\pi\colon X\rightarrow B$ is a morphism of derived prestacks and $\cG$ is a gerbe on $B$. Then the pullback gerbe $\pi^*\cG$ admits a flat connection relative to $B$. Indeed, the composite
\[\cA^{1, \cl}(B, n+1)\longrightarrow \cA^{1, \cl}(X, n+1)\longrightarrow \cA^{1, \cl}(X/B, n+1)\]
admits a canonical nullhomotopy and $c_1(\pi^*\cG) = \pi^*c_1(\cG)$.
\label{ex:pullbackgerbe}
\end{example}

\subsection{Quasi-coherent sheaves and quasi-coherent stacks}
\label{sect:QCohQStk}

In this section we recall the notion of a quasi-coherent sheaf and a quasi-coherent stack over a derived prestack. To deal with size issues, throughout the paper we fix two universes of small and large sets, so that the set of small sets is large. We denote by $\hCat$ the $\infty$-category of large $\infty$-categories with arbitrary functors as morphisms. We denote by $\PrL_k$ the $\infty$-category of stable presentable $k$-linear $\infty$-categories, so that $\PrL_k\in\hCat$.

\begin{defn}
A derived prestack $X$ is \defterm{small} if it is obtained by a small colimit of derived affine schemes. A morphism $f\colon X\rightarrow Y$ of derived prestacks is \defterm{small} if for every morphism $S\rightarrow Y$ from a derived affine scheme $S$ the base change $X\times_Y S$ is small.
\end{defn}

\begin{example}
Here are two examples of small prestacks.
\begin{enumerate}
\item Suppose $X$ is locally of finite presentation. Then it is obtained by a left Kan extension from a functor $\CAlg^{\leq 0, \mathrm{fp}}\rightarrow \cS$ from the \emph{small} $\infty$-category of finitely presentable connective commutative dg algebras. In particular, it is small.

\item Suppose $X$ is a derived Artin stack. Then it is obtained from derived affine schemes by a sequence of \'{e}tale sheafifications and geometric realizations, both of which are small colimits.
\end{enumerate}
\end{example}

\begin{defn}
Let $X$ be a derived prestack.
\begin{itemize}
\item Suppose $X$ is small. The \defterm{algebra of functions} is the limit
\[\cO(X) = \lim_{\Spec A\rightarrow X} A\in\Mod_k.\]
\item The \defterm{$\infty$-category of quasi-coherent sheaves} is the limit
\[\QCoh(X) = \lim_{\Spec A\rightarrow X} \Mod_A\in\hCat.\]
The structure sheaf $\cO_X\in\QCoh(X)$ is the object given by $A\in\Mod_A$ on each affine.
\item The \defterm{$\infty$-category of sheaves of categories} is the limit
\[\ShvCat(X) = \lim_{\Spec A\rightarrow X} \Mod_{\Mod_A}(\PrL_k)\in\hCat.\]
The categorified structure sheaf $\cQ_X\in\ShvCat(X)$ is the object given by $\Mod_A\in\Mod_{\Mod_A}(\PrL_k)$ on each affine.
\end{itemize}
\end{defn}

For a morphism of derived prestacks $f\colon X\rightarrow Y$ there are natural pullback morphisms and functors
\[f^*\colon \cO(Y)\longrightarrow \cO(X),\qquad f^*\colon \QCoh(Y)\longrightarrow \QCoh(X),\qquad f^*\colon\ShvCat(Y)\longrightarrow \ShvCat(X).\]

If $X$ is small, $\QCoh(X)$ is presentable (see \cite[Proposition 6.2.3.4]{SAG}). Using the adjoint functor theorem we conclude that for a morphism $f\colon X\rightarrow Y$ of small derived prestacks there is a right adjoint
\[f_*\colon\QCoh(X)\longrightarrow \QCoh(Y).\]
Similarly, if $f\colon X\rightarrow Y$ is a small morphism of derived prestacks, the pullback functor on sheaves of categories admits a right adjoint
\[f_*\colon \ShvCat(X)\longrightarrow \ShvCat(Y),\]
see \cite[Section 3.1]{GaitsgoryShvCat} and \cite[Section 10.1.4]{SAG}.

The stacks $\B^{n+1}\Gm$ for small $n$ can be understood as follows:
\begin{itemize}
\item $n=0$. Let $\QCoh^\times$ be the derived stack of invertible quasi-coherent sheaves. We have a natural morphism
\[\B\Gm\times \Z\longrightarrow \QCoh^\times.\]
Namely, we send $m\in \Z$ to $\cO[m]$ and $\Gm$ to automorphisms of $\cO[m]$. Since every invertible $R$-module is Zariski-locally free, the above morphism is an isomorphism (see also \cite[Corollary 2.9.5.7]{SAG}).

\item $n=1$. By \cite[Theorem 5.13]{DAGXI} or \cite[Theorem 2.4]{ToenAzumaya} $\ShvCat$ satisfies \'{e}tale descent, so we have a derived stack $\ShvCat^\times$ of invertible sheaves of categories. Again, we have a natural morphism
\[\B^2\Gm\times \B\Z\longrightarrow \ShvCat^\times\]
to the abelian group stack of invertible elements in $\ShvCat^\sim$. However, not every sheaf of presentable categories is \'{e}tale-locally trivial. It is shown in \cite[Corollary 2.12]{ToenAzumaya} that the image of the above morphism is given by those invertible sheaves of categories which \'{e}tale-locally admit a generator.
\end{itemize}

If $\cL$ is a line bundle on a derived prestack $X$, i.e. a morphism $X\rightarrow \B\Gm$, we may consider the associated invertible sheaf $\cL\in\QCoh(X)$. If $X$ is small, we may define the space of global sections
\[\Gamma(X, \cL) = \Hom_{\QCoh(X)}(\cO_X, \cL).\]
An analogous definition can be given on a categorified level.

\begin{defn}
Let $X$ be a derived prestack and $\cG$ a gerbe which we consider an invertible sheaf of categories over $X$. The $\infty$-category of \defterm{$\cG$-twisted quasi-coherent sheaves} is
\[\QCoh^\cG(X) = \Hom_{\ShvCat(X)}(\cQ_X, \cG).\]
\end{defn}

As before, if $X$ is a small derived prestack, $\QCoh^\cG(X)$ is a presentable $\infty$-category. Moreover, given a morphism $f\colon Y\rightarrow X$ of derived prestacks there is a pullback functor
\[f^*\colon \QCoh^\cG(X)\longrightarrow \QCoh^{f^*\cG}(Y).\]
If $X$ and $Y$ are small, it admits a right adjoint
\[f_*\colon \QCoh^{f^*\cG}(Y)\longrightarrow \QCoh^\cG(X).\]

\subsection{Prequantization}
\label{sect:prequantization}

We are ready to define prequantization of shifted symplectic stacks. Throughout we assume $n\geq 0$ is a natural number.

\begin{defn}
Let $(X, \omega_X)$ be an $n$-shifted symplectic stack. Its \defterm{prequantization} is an $n$-gerbe $(\cG, \nabla)$ with a connective structure on $X$ together with an equivalence $\curv(\cG, \nabla) = \omega_X$ in $\cA^{2, \cl}(X, n)$.
\label{def:prequantumsymplectic}
\end{defn}

Let us make the following observation.

\begin{prop}
Let $\pi\colon X\rightarrow B$ be a morphism of derived prestacks, $\cG$ an $n$-gerbe on $B$ together with a connective structure $\nabla$ on $f^*\cG$ extending the canonical flat connection on $\pi^*\cG$ relative to $B$. Then $\pi$ has a natural structure of an $n$-shifted isotropic fibration.
\label{prop:prequantumisotropicfibration}
\end{prop}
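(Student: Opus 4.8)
The plan is to take the $n$-shifted presymplectic structure on $X$ to be the curvature of the pulled-back gerbe with connective structure, and to extract the required relative trivialization from the flatness of the connection relative to $B$, using the general principle that a flat connection has canonically vanishing curvature. Concretely, since $(\pi^*\cG, \nabla)$ is an $n$-gerbe with a connective structure on $X$, the pullback square defining the curvature produces an element
\[\omega := \curv(\pi^*\cG, \nabla)\in\cA^{2, \cl}(X, n),\]
i.e. an $n$-shifted presymplectic structure on $X$. It then remains to trivialize its image under $\cA^{2, \cl}(X, n)\rightarrow \cA^{2, \cl}(X/B, n)$, which is exactly the data of an isotropic fibration.

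First I would observe that the curvature construction is natural in the restriction from absolute to relative forms. The defining pullback square
\[
\xymatrix{
\cA^{2, \cl}(X, n) \ar[r] \ar[d] & 0 \ar[d] \\
\cA^{1, \cl}(X, n+1) \ar[r] & \cA^1(X, n+1)
}
\]
maps to its relative analogue over $B$ via the restriction maps $\cA^{p, \cl}(X, -)\rightarrow \cA^{p, \cl}(X/B, -)$ and $\cA^1(X, -)\rightarrow \cA^1(X/B, -)$, which assemble into a morphism of pullback squares. Hence the image of $\omega$ in $\cA^{2, \cl}(X/B, n)$ is the relative curvature computed from the restricted data: the image of $c_1(\pi^*\cG)$ in $\cA^{1, \cl}(X/B, n+1)$, together with the relative connective structure induced by $\nabla$.

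The crux is then to use the hypothesis that $\nabla$ extends the canonical flat connection on $\pi^*\cG$ relative to $B$ from \cref{ex:pullbackgerbe}. That flat connection is a nullhomotopy $\tilde h$ of the image of $c_1(\pi^*\cG)$ in $\cA^{1, \cl}(X/B, n+1)$, and the extension hypothesis says precisely that the relative connective structure induced by $\nabla$ is the image of $\tilde h$ under the forgetful map $\cA^{1, \cl}(X/B, n+1)\rightarrow \cA^1(X/B, n+1)$. Thus the relative curvature is the image, under the relative curvature pullback square, of a point of $\cA^{1, \cl}(X/B, n+1)$ whose trivialization is witnessed in $\cA^{1, \cl}$ rather than merely in $\cA^1$. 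Reading $\cA^{2, \cl}(X/B, n)$ as the homotopy fiber of the forgetful map over $0$, the path $\tilde h$ lifts to a path in this fiber from the relative curvature to the zero object, giving the desired canonical trivialization of the image of $\omega$ in $\cA^{2, \cl}(X/B, n)$.

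I expect the main obstacle to be making this last step homotopy-coherent. One must articulate that the ``extension'' datum is exactly a homotopy in the commuting square relating the absolute connective structure $\nabla$ and the relative flat connection over their common image in $\cA^1(X/B, n+1)$, and then check that this homotopy is precisely what promotes the trivialization of the relative curvature from $\cA^1$ to $\cA^{2, \cl}$. The work is entirely in tracking these coherences through the nested pullback squares; there is no further geometric input beyond the vanishing of curvature for flat connections.
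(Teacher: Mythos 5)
Your proposal is correct and follows essentially the same route as the paper's (much terser) proof: take $\curv(\pi^*\cG,\nabla)$ as the presymplectic structure and extract the relative trivialization from the flatness of the relative connection, i.e.\ from the fact that the relative connective structure induced by $\nabla$ is the image of a nullhomotopy already existing in $\cA^{1,\cl}(X/B,n+1)$. Your unpacking of the fiber-sequence mechanics (naturality of the curvature square under restriction to relative forms, and the lifting of the path $\tilde h$ to a nullhomotopy in the fiber) is exactly the content the paper leaves implicit in the phrase ``the connective structure along the fibers of $\pi$ is flat.''
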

\begin{proof}
The curvature $\curv(\pi^*\cG, \nabla)$ gives an $n$-shifted presymplectic structure on $X$. Its image in $\cA^{2, \cl}(X/B)$ has a canonical nullhomotopy since the connective structure along the fibers of $\pi$ is flat.
\end{proof}

Therefore, we may give the following definition.

\begin{defn}
Let $\pi\colon X\rightarrow B$ be a morphism of derived Artin stacks locally of finite presentation. A \defterm{prequantum $n$-shifted Lagrangian fibration} is given by the following data:
\begin{itemize}
\item An $n$-gerbe $\cG$ on $B$.
\item An extension of the natural relative flat connection on $\pi^*\cG$ to a connective structure $\nabla$.
\end{itemize}
These have to satisfy the nondegeneracy condition that the induced $n$-shifted isotropic fibration given by \cref{prop:prequantumisotropicfibration} is an $n$-shifted Lagrangian fibration. In this case we say $\cG$ and $\nabla$ define a \defterm{prequantization} of the $n$-shifted Lagrangian fibration.
\label{def:prequantumfibration}
\end{defn}

Let us now incorporate Lagrangians in these definitions. The notion of a prequantization of a single Lagrangian is a direct analog of \cref{def:prequantumsymplectic}.

\begin{defn}
Let $f\colon L\rightarrow X$ be a morphism of derived Artin stacks locally of finite presentation. A \defterm{prequantum $n$-shifted Lagrangian structure} is given by an $n$-gerbe $(\cG, \nabla)$ with a connective structure on $X$ together with a trivialization of $f^*(\cG, \nabla)$, such that the induced $n$-shifted presymplectic structure on $X$ and the induced $n$-shifted isotropic structure on $f$ are nondegenerate.
\label{def:prequantumLagrangian}
\end{defn}

\begin{remark}
Recall the notion of an $n$-shifted Lagrangian triple $X\xleftarrow{f} Z\xrightarrow{g} B$ from \cref{def:Lagrangiantriple}. Its \defterm{prequantization} is given by a prequantization $(\cG_X, \nabla)$ of $X$, an $n$-gerbe $\cG_B$ on $B$ and an isomorphism $f^*(\cG_X, \nabla)\cong (g^*\cG_B, \nabla_{/B})$ ($\nabla/B$ is the natural flat connection on $g^*\cG_B$ relative to $B$) of $n$-gerbes with a connective structure relative to $B$ such that applying $\curv$ we obtain the Lagrangian structure on $Z\rightarrow X\times B$ relative to $B$. For $Z=X$ this reduces to \cref{def:prequantumfibration} and for $B=\pt$ this reduces to \cref{def:prequantumLagrangian}.
\end{remark}

Suppose $\pi_X\colon X\rightarrow B_X$ is a prequantum $n$-shifted Lagrangian fibration and consider a commutative diagram
\[
\xymatrix{
L \ar^{f}[r] \ar^{\pi_L}[d] & X \ar^{\pi_X}[d] \\
B_L \ar^{g}[r] & B_X
}
\]
By definition $\pi_X^*\cG$ has a connective structure $\nabla$ extending the natural flat connection along $\pi_X$. Therefore, $\pi_L^*g^*\cG$ has a connective structure $f^*\nabla$ extending the natural flat connection along $\pi_L$. Let us in addition fix a trivialization of the $n$-gerbe $g^*\cG$ on $B_L$. Then we obtain a natural one-form $f^*\nabla\in\cA^1(L, n)$ which vanishes along the fibers of $L\rightarrow B_L$. Moreover, by construction
\[f^*\curv(\pi_X^*\cG, \nabla) = \ddr(f^*\nabla)\in\cA^{2,\cl}(L, n).\]

\begin{defn}
Consider a diagram
\begin{equation}
\xymatrix{
L \ar^{f}[r] \ar^{\pi_L}[d] & X \ar^{\pi_X}[d] \\
B_L \ar^{g}[r] & B_X
}
\label{eq:prequantumrelativeLagrangianfibration}
\end{equation}
of derived Artin stacks locally of finite presentation. A \defterm{prequantum $n$-shifted Lagrangian fibration} is given by the following data:
\begin{enumerate}
\item An $n$-gerbe $\cG$ on $B_X$.
\item An extension of the natural relative flat connection on $\cG$ along $X\rightarrow B_X$ to a connective structure $\nabla$ on the whole of $X$.
\item A trivialization of the $n$-gerbe $g^*\cG$ on $B_L$.
\item A trivialization of the one-form
\[f^*\nabla\in\fib(\cA^1(L, n)\longrightarrow \cA^1(L/B_L, n)).\]
\end{enumerate}
These have to satisfy the following nondegeneracy conditions:
\begin{enumerate}
\item The induced $n$-shifted isotropic fibration structure on $X\rightarrow B_X$ is nondegenerate.

\item The induced $n$-shifted isotropic fibration structure on \eqref{eq:prequantumrelativeLagrangianfibration} is nondegenerate.
\end{enumerate}
In this case we say the above data define a \defterm{prequantization} of the $n$-shifted Lagrangian fibration structure on \eqref{eq:prequantumrelativeLagrangianfibration}.
\end{defn}

We will define prequantizations of $n$-shifted Lagrangian correspondences $X\leftarrow L\rightarrow Y$ to be diagrams
\[
\xymatrix@R=0cm{
& L \ar[dl] \ar[dd]\ar[dr] & \\
X \ar[dd] && Y \ar[dd] \\
& B_L \ar[dl] \ar[dr] & \\
B_X && B_Y
}
\]
of stacks together with $n$-gerbes $\cG_X, \cG_Y$ on $B_X$ and $B_Y$ with an isomorphism of their pullbacks to $B_L$ and connective structures on their pullbacks to $X$ and $Y$ such that
\[
\xymatrix{
L \ar[d] \ar[r] & \overline{X}\times Y \ar[d] \\
B_L \ar[r] & B_X\times B_Y
}
\]
is a prequantum $n$-shifted Lagrangian fibration.

\begin{example}
Consider the diagram \eqref{eq:prequantumrelativeLagrangianfibration}, where $X=B_X=\pt$ is equipped with the trivial prequantization. A trivialization of the trivial $n$-gerbe on $B_L$ is simply an $(n-1)$-gerbe $\cG_L$ on $B_L$. The corresponding one-form
\[f^*\nabla\in\fib(\cA^1(L, n)\longrightarrow \cA^1(L/B_L, n))\]
is the pullback under $\pi_L$ of $c_1(\cG_L)$. In other words, in this case we simply recover the notion of a prequantum $(n-1)$-shifted Lagrangian fibration on $L\rightarrow B_L$.
\end{example}

\begin{example}
Consider the diagram \eqref{eq:transverseLagrangian} from \cref{ex:transverseLagrangian}, where $L\rightarrow X\rightarrow B_X$ is an isomorphism. In this case the $n$-gerbe $\cG$ is trivial. The connective structure $\nabla$ on $X$ corresponds to a one-form
\[\alpha\in\fib(\cA^1(X, n)\longrightarrow \cA^1(X/B, n)).\]
The additional data is a trivialization of the pullback one-form
\[f^*\alpha\in\fib(\cA^1(L, n)\longrightarrow \cA^1(L/B, n)).\]
\end{example}

\begin{example}
Recall that the de Rham stack $X_{\dR}$ of any prestack has the trivial cotangent complex. In particular, it can be regarded as an $n$-shfited symplectic stack for any $n$. Now suppose $X\rightarrow B$ is a prequantum $n$-shifted Lagrangian fibration and consider the diagram
\[
\xymatrix{
X \ar[r] \ar[d] & X_{\dR} \ar[d] \\
B \ar[r] & B_{\dR}
}
\]
It has a natural structure of a prequantum $(n+1)$-shifted Lagrangian fibration given by the trivial data on $X_{\dR}\rightarrow B_{\dR}$ whose trivialization on $X\rightarrow B$ is specified by the data of the prequantum $n$-shifted Lagrangian fibration.
\label{ex:prequantumDR}
\end{example}

We have the following prequantum analog of \cref{thm:Lagrangianfibrationintersection}.

\begin{thm}
Let $X\rightarrow B_X$ be a prequantum $n$-shifted Lagrangian fibration together with the prequantum data on the squares in the diagram
\[
\xymatrix@R=0cm{
L_1 \ar[dr] \ar[dd] && L_2 \ar[dl] \ar[dd] \\
& X \ar^{\pi_X}[dd] & \\
B_{L_1} \ar[dr] && B_{L_2} \ar[dl] \\
& B_X &
}
\]
Then passing to pullbacks we obtain a prequantum $(n-1)$-shifted Lagrangian fibration
\[\pi\colon L_1\times_X L_2\rightarrow B_{L_1}\times_{B_X} B_{L_2}.\]
\label{thm:prequantumLagrangianfibrationintersection}
\end{thm}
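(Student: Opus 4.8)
The plan is to lift the entire argument of \cref{thm:Lagrangianfibrationintersection} from the level of Lagrangian structures to the level of prequantum data, i.e.\ gerbes with connective structures. The key observation is that every geometric step in that proof---forming relative Lagrangians, base-changing along $B\rightarrow B_{L_i}$, and composing Lagrangian correspondences---has a prequantum refinement: a Lagrangian structure is recovered by applying $\curv$ to a connective structure (as recorded in the remark following \cref{def:prequantumLagrangian}), and these curvature maps are compatible with pullback and with the composition of correspondences. So the strategy is to carry the gerbes and their connective structures along exactly the same diagram chase, and only at the end verify that applying $\curv$ reproduces the underlying Lagrangian fibration from \cref{thm:Lagrangianfibrationintersection}.

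First I would package the input data. The prequantum $n$-shifted Lagrangian fibration on $X\rightarrow B_X$ supplies an $n$-gerbe $\cG_X$ on $B_X$ together with a connective structure $\nabla_X$ on $\pi_X^*\cG_X$ extending the relative flat connection. The prequantum fibration on $L_i$ supplies a trivialization of the restricted gerbe on $B_{L_i}$ and a trivialization of the relative one-form $f_i^*\nabla_X\in\fib(\cA^1(L_i,n)\rightarrow\cA^1(L_i/B_{L_i},n))$. These are precisely the connective-structure analogs of the three relative Lagrangian morphisms $L_i\times B_X\rightarrow X\times B_X$ and $X\rightarrow X\times B_X$ appearing at the start of the proof of \cref{thm:Lagrangianfibrationintersection}. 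I would then set $B=B_{L_1}\times_{B_X}B_{L_2}$ and pull $\cG_X$ back to $B$; the two trivializations over $B_{L_1}$ and $B_{L_2}$ restrict to $B$ and, together with the isomorphism of their pullbacks, assemble into the gerbe/connective-structure datum on $L_1\times_X L_2\rightarrow B$ demanded by \cref{def:prequantumfibration}.

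The substance of the argument is to show that these trivializations glue consistently along the fiber product $L_1\times_X L_2$, i.e.\ that the composite of the two prequantum Lagrangian correspondences exists at the gerbe level and not merely after taking curvature. For this I would invoke the prequantization of $n$-shifted Lagrangian correspondences defined just before \cref{thm:prequantumLagrangianfibrationintersection}: the composition of Lagrangian correspondences in \cite[Theorem 3.1]{BenBassat} is realized by a fiber product of the ambient stacks, and connective structures pull back and restrict functorially, so the trivialization of $f^*\nabla$ over $L_1\times_X L_2$ is obtained by restricting and concatenating the trivializations coming from $L_1$ and $L_2$ over the common base $X$. Compatibility of the two trivializations over the intersection is exactly the condition that the isomorphism $f_1^*(\cG_X,\nabla_X)\cong g_1^*\cG_{B_{L_1}}$ and its counterpart for $L_2$ agree after restriction to $L_1\times_X L_2$, which holds because both are restrictions of the single datum $(\cG_X,\nabla_X)$ on $X$.

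The main obstacle I anticipate is precisely this coherence of the glued trivialization: one must check that the homotopies witnessing flatness along the fibers of $L_1\rightarrow B_{L_1}$ and $L_2\rightarrow B_{L_2}$ are compatible on the overlap so that the resulting one-form on $L_1\times_X L_2$ genuinely lies in $\fib(\cA^1(L_1\times_X L_2,n-1)\rightarrow\cA^1((L_1\times_X L_2)/B,n-1))$ and has the correct curvature. Once this is established, nondegeneracy is immediate: applying $\curv$ to the assembled connective structure recovers the $(n-1)$-shifted isotropic fibration of \cref{thm:Lagrangianfibrationintersection}, which was shown there to be Lagrangian, so no new nondegeneracy check is required.
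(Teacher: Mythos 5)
Your outline coincides with the paper's: form the difference $(n-1)$-gerbe $\cG_{L_1,L_2}$ on $B=B_{L_1}\times_{B_X}B_{L_2}$ from the two trivializations of $\cG_X$, equip its pullback to $L_1\times_X L_2$ with a connective structure assembled from the prequantum data on the two squares, and deduce nondegeneracy from \cref{thm:Lagrangianfibrationintersection}. The problem is that the one step you explicitly defer (``the main obstacle I anticipate \dots Once this is established'') is the entire mathematical content of the theorem, and your proposal never supplies the mechanism that resolves it. The paper does: the proof of \cite[Theorem 2.9]{PTVV} constructs a map from the limit of the diagram whose points are closed one-forms of degree $n+1$ on $X$ together with nullhomotopies of their pullbacks to $L_1$ and to $L_2$ (i.e.\ the limit of $0\rightarrow\cA^{1,\cl}(L_1,n+1)\leftarrow\cA^{1,\cl}(X,n+1)\rightarrow\cA^{1,\cl}(L_2,n+1)\leftarrow 0$) into $\cA^{1,\cl}(L_1\times_X L_2,n)$, and one checks that this map carries $c_1(\pi_X^*\cG_X)$, with the nullhomotopies furnished by the gerbe trivializations, exactly to $c_1(\pi^*\cG_{L_1,L_2})$. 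It is this compatibility with $c_1$ that converts the connective structure on $\pi_X^*\cG_X$ plus its trivializations on $L_1$, $L_2$ into a connective structure on $\pi^*\cG_{L_1,L_2}$, and rerunning the same argument relative to the bases $B_X$, $B_{L_1}$, $B_{L_2}$ is what shows the result extends the natural fiberwise flat connection. Your phrase ``restrict and concatenate'' gestures at the right operation (concatenating two nullhomotopies is taking a loop, hence dropping one degree), but without identifying this map and its compatibility with Chern classes, the coherence question you raise is named rather than answered.

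Two further points. First, your appeal to ``the composite of the two prequantum Lagrangian correspondences'' is circular: the paper defines prequantum $1$-shifted Lagrangian correspondences but never defines their composition at the prequantum level --- that the composite exists is precisely what \cref{thm:prequantumLagrangianfibrationintersection} establishes (the prequantum correspondence category is only conjectural, cf.\ the remark after \cref{prop:quantizationfunctor}), so it cannot be invoked as a tool here. Second, a degree slip: the connective structure on the $(n-1)$-gerbe $\pi^*\cG_{L_1,L_2}$ is a nullhomotopy of $c_1(\pi^*\cG_{L_1,L_2})\in\cA^1(L_1\times_X L_2,n)$, so the relevant space is $\fib\bigl(\cA^1(L_1\times_X L_2,n)\rightarrow\cA^1((L_1\times_X L_2)/B,n)\bigr)$, not the degree-$(n-1)$ spaces you wrote.
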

\begin{proof}
We are given an $n$-gerbe $\cG_X$ on $B_X$ together with its trivializations along $B_{L_i}\rightarrow B_X$. Therefore, we obtain a natural $(n-1)$-gerbe $\cG_{L_1, L_2}$ on $B_{L_1}\times_{B_X} B_{L_2}$. Let us now 

The proof of \cite[Theorem 2.9]{PTVV} constructs a map from the limit of
\[
\xymatrix{
0 \ar[dr] && \cA^{1, \cl}(X, n+1) \ar[dl] \ar[dr] && 0 \ar[dl] \\
& \cA^{1, \cl}(L_1, n+1) && \cA^{1, \cl}(L_2, n+1)
}
\]
to $\cA^{1, \cl}(L_1\times_X L_2, n)$. It is easy to see that the image of $c_1(\pi_X^*\cG_X)$ under this map is exactly $c_1(\pi^*\cG_{L_1, L_2})$. In particular, it shows that the connective structure on $\pi_X^*\cG_X$ together with its trivializations on $L_1$ and $L_2$ give rise to a connective structure on $\cG_{L_1, L_2}$. Working relatively to the bases $B_X, B_{L_1}$ and $B_{L_2}$ we see that it is moreover compatible with the natural flat connections along the fibers.

Finally, the nondegeneracy condition for the induced isotropic fibration follows from \cref{thm:Lagrangianfibrationintersection}.
\end{proof}

\subsection{Twisted cotangent bundles}

In this section we study prequantizations of shifted twisted cotangent bundles. Let us begin with the case of the usual shifted cotangent bundle.

Let $X$ be a derived Artin stack locally of finite presentation. By construction there is a natural Liouville one-form
\[\lambda\in\fib(\cA^1(\T^*[n] X, n)\longrightarrow \cA^1(X, n))\]
so that the $n$-shifted symplectic structure on $\T^*[n] X$ is given by $\ddr \lambda$.

\begin{prop}
Consider the projection $\pi\colon \T^*[n] X\rightarrow X$. The trivial $n$-gerbe $\cG$ on $X$ together with a connective structure on $\pi^*\cG$ given by the Liouville one-form $\lambda$ defines a prequantum $n$-shifted Lagrangian fibration structure on $\pi$.
\label{prop:prequantumcotangent}
\end{prop}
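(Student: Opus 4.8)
The plan is to unwind \cref{def:prequantumfibration} in this special case and verify the two conditions it demands. Since $\cG$ is the trivial $n$-gerbe on $X$, its pullback $\pi^*\cG$ is the trivial $n$-gerbe on $\T^*[n] X$ and $c_1(\pi^*\cG) = 0$. A connective structure on $\pi^*\cG$ is then a nullhomotopy of $0$ in $\cA^1(\T^*[n] X, n+1)$, i.e. an element of $\cA^1(\T^*[n] X, n)$; we take this to be the Liouville one-form $\lambda$. With this choice there are exactly two things to check: first, that $\lambda$ extends the natural relative flat connection on $\pi^*\cG$ along $\pi$, and second, the nondegeneracy condition that the induced $n$-shifted isotropic fibration of \cref{prop:prequantumisotropicfibration} is in fact Lagrangian.

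For the first point, recall from \cref{ex:pullbackgerbe} that the natural relative flat connection on the pullback of a gerbe from the base is the canonical nullhomotopy; for the trivial gerbe this is simply the zero relative connection. Thus I must show that the image of $\lambda$ under $\cA^1(\T^*[n] X, n)\rightarrow \cA^1(\T^*[n] X/X, n)$ is canonically trivial. This is precisely the tautological property of the Liouville form: $\lambda$ is built so that its contraction with any vector tangent to the fibers of $\pi$ vanishes, equivalently its image in the relative de Rham complex $\DR(\T^*[n] X/X)$ is zero (in the model $\lambda = \sum p_i\,\ddr q_i$ the base differentials $\ddr q_i$ die in the relative complex). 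Hence $\lambda$ restricts to the trivial flat connection along the fibers and so genuinely extends the natural relative flat connection.

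For the second point, the curvature of $(\pi^*\cG, \lambda)$ is $\curv(\pi^*\cG, \lambda) = \ddr\lambda$, which is exactly the standard $n$-shifted symplectic structure $\omega$ on $\T^*[n] X$ recalled just before the statement. The induced isotropic fibration of \cref{prop:prequantumisotropicfibration} therefore has underlying presymplectic form $\omega$, and its trivialization over the relative complex $\cA^{2,\cl}(\T^*[n] X/X, n)$ is the one produced by the relative vanishing of $\lambda$ established above. I would then argue that this coincides with the $n$-shifted Lagrangian fibration structure on $\pi$ from \cref{ex:shiftedcotangent} (due to \cite{CalaqueCotangent}), whose closure datum is likewise governed by $\lambda$ as a relative primitive of $\omega$. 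Since that structure is Lagrangian, the nondegeneracy condition holds.

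The main obstacle is this last identification of closure data. The underlying two-forms plainly agree, so the content lies in matching the two trivializations in the relative closed-form complex, not merely the two presymplectic forms — i.e. checking that the nullhomotopy of the relative image of $\omega$ arising from the connective structure $\lambda$ agrees with Calaque's. Because both are induced by $\lambda$ serving as a relative primitive for $\omega = \ddr\lambda$, they ought to agree on the nose, but making this coherence precise (rather than merely at the level of $\H^\bullet$) is the one genuinely technical step; everything else is a matter of unwinding definitions.
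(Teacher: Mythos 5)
Your proposal is correct and takes essentially the same route as the paper, whose entire proof is the observation that everything except nondegeneracy is definitional unwinding, with nondegeneracy cited from \cite[Theorem 2.2]{CalaqueCotangent}. The coherence issue you flag at the end is not a genuine obstacle: the Lagrangian fibration structure of \cref{ex:shiftedcotangent} is itself \emph{defined} in \cite{CalaqueCotangent} by taking $\lambda$ as the relative primitive of $\omega = \ddr\lambda$, so the trivialization in $\cA^{2,\cl}(\T^*[n]X/X, n)$ induced by the connective structure is literally the same datum as Calaque's, not merely cohomologous to it.
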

\begin{proof}
The only nontrivial fact is the nondegeneracy of the prequantum data which is proven in \cite[Theorem 2.2]{CalaqueCotangent}.
\end{proof}

We have a relative analog of the above statement. Let $g\colon Y\rightarrow X$ be a morphism of derived Artin stacks locally of finite presentation. Consider the diagram
\begin{equation}
\xymatrix{
\N^*[n] Y \ar^{\pi_Y}[d] \ar^{f}[r] & \T^*[n]X \ar^{\pi_X}[d] \\
Y \ar^{g}[r] & X
}
\label{eq:conormalfibration}
\end{equation}
The pullback of the Liouville one-form
\[f^*\lambda\in\fib(\cA^1(\N^*[n]Y, n)\rightarrow \cA^1(\N^*[n]Y/Y, n))\]
has a natural nullhomotopy. Combining with \cite[Proposition 2.12]{CalaqueCotangent} we obtain the following statement.

\begin{prop}
The trivial $n$-gerbe on $X$ with the naive trivialization of its pullback on $Y$ equipped with the connective structures on $\T^*[n]X$ and $\N^*[n] Y$ given by the Liouville one-form define the structure of a prequantum $n$-shifted Lagrangian fibration on \eqref{eq:conormalfibration}.
\label{prop:cotangentprequantization}
\end{prop}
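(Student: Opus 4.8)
The plan is to produce the four pieces of data required by the definition of a prequantum $n$-shifted Lagrangian fibration for the square \eqref{eq:conormalfibration} and then to reduce its two nondegeneracy conditions to results already at hand. In the notation of \eqref{eq:prequantumrelativeLagrangianfibration}, here $\T^*[n]X$ is the $n$-shifted symplectic stack, $\N^*[n]Y$ is the Lagrangian, the base $B_X$ is $X$ and the base $B_L$ is $Y$; the $n$-gerbe $\cG$ on $X$ is taken to be trivial.

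First I would assemble the data. The connective structure $\nabla$ on $\pi_X^*\cG$ over $\T^*[n]X$ is the one given by the Liouville one-form $\lambda$, exactly as in \cref{prop:prequantumcotangent}. Since $\cG$ is trivial, its pullback $g^*\cG$ to $Y$ carries the naive trivialization, which is the third datum. The fourth datum is a trivialization of the relative one-form $f^*\nabla = f^*\lambda$ lying in $\fib(\cA^1(\N^*[n]Y, n)\to\cA^1(\N^*[n]Y/Y, n))$; this is precisely the natural nullhomotopy recalled just before the statement, encoding the classical fact that the tautological one-form restricts to zero on a conormal bundle.

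Next I would record the curvature compatibility. By construction the connective structure on $\T^*[n]X$ has curvature $\ddr\lambda$, the canonical $n$-shifted symplectic form, and $f^*\curv(\pi_X^*\cG,\nabla)=\ddr(f^*\lambda)$ in $\cA^{2,\cl}(\N^*[n]Y, n)$. Hence the chosen nullhomotopy of $f^*\lambda$ simultaneously trivializes the restriction of the symplectic form and is compatible with the flat connection along the fibers of $\N^*[n]Y\to Y$. This assembles the underlying prequantum datum, so only the two nondegeneracy conditions remain.

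The first condition, that $\T^*[n]X\to X$ be an $n$-shifted Lagrangian fibration, is exactly \cref{prop:prequantumcotangent}, whose nontrivial input is \cite[Theorem 2.2]{CalaqueCotangent}. The second and main condition requires that the induced $n$-shifted isotropic fibration on the square \eqref{eq:conormalfibration} be Lagrangian; at the level of the underlying non-prequantum structures this is exactly \cref{ex:shiftedconormal}, realized by the fiber-sequence criterion of \cite[Proposition 2.12]{CalaqueCotangent}. I expect the main obstacle to be verifying that the prequantum data assembled above induces, via $\curv$, precisely the relative Lagrangian fibration structure of \cref{ex:shiftedconormal} rather than a merely isotropic one; once this identification is in place, nondegeneracy is inherited directly from Calaque's result and the proof is complete.
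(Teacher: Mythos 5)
Your proposal is correct and follows essentially the same route as the paper: the paper's (very terse) argument consists precisely of noting the natural nullhomotopy of $f^*\lambda$ in $\fib(\cA^1(\N^*[n]Y, n)\rightarrow \cA^1(\N^*[n]Y/Y, n))$ and then invoking \cite[Proposition 2.12]{CalaqueCotangent} for nondegeneracy, with the cotangent-side nondegeneracy already handled by \cref{prop:prequantumcotangent}. The identification you flag as the "main obstacle" — that the curvature of this data recovers the relative Lagrangian structure of \cref{ex:shiftedconormal} — is exactly what the paper treats as immediate from the construction, so your account matches the intended proof.
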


Let us now consider the twisted case. Suppose $\cG$ is an $n$-gerbe on $X$ and $\alpha\in\cA^{1, \cl}(X, n+1)$. Consider the diagram
\[
\xymatrix{
X \ar^-{\Gamma_\alpha}[r] \ar^{\pi_L = \id}[d] & \T^*[n+1] X \ar^{\pi_X}[d] \\
X \ar^{g=\id}[r] & X
}
\]

Consider the trivial $(n+1)$-gerbe $\cG_X$ on $X$ with the connective structure on $\pi_X^*\cG_X$ given by the Liouville one-form $\lambda$. Consider the trivialization of the $(n+1)$-gerbe $g^*\cG_X$ given by $\cG$. Then
\[\alpha - c_1(\cG) = f^*\nabla\in\cA^1(X, n+1).\]
In particular, if we assume that $\alpha=c_1(\cG)$ is the characteristic class of $\cG$, the above element is canonically zero. We may summarize this discussion in the following statement.

\begin{prop}
Suppose $\cG$ is an $n$-gerbe on $X$. Then the diagram
\[
\xymatrix{
X \ar^-{\Gamma_{c_1(\cG)}}[r] \ar@{=}[d] & \T^*[n+1] X \ar[d] \\
X \ar@{=}[r] & X
}
\]
has a natural structure of a prequantum $(n+1)$-shifted Lagrangian fibration.
\label{prop:prequantumcotangentsection}
\end{prop}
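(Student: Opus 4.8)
The plan is to match, item by item, the data produced in the paragraph preceding the statement against the definition of a prequantum $(n+1)$-shifted Lagrangian fibration, and then to deduce the two nondegeneracy conditions from results already in place. The one point requiring care at the outset is notational: in the ambient definition the $(n+1)$-shifted symplectic stack is $\T^*[n+1]X$, whereas the base $B_X$, the source Lagrangian $L$, and its base $B_L$ are all copies of the given $X$, with $f = \Gamma_{c_1(\cG)}$, $g = \id$ and $\pi_L = \id$.

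First I would supply the four pieces of data. For item (1) I take the trivial $(n+1)$-gerbe $\cG_X$ on $X = B_X$. For item (2) I take the connective structure on $\pi_X^*\cG_X$ given by the Liouville one-form $\lambda$; by \cref{prop:prequantumcotangent} this extends the canonical flat connection along $\T^*[n+1]X\rightarrow X$. For item (3) I take the trivialization of $g^*\cG_X = \cG_X$ determined by $\cG$ itself, using that a trivialization of the trivial $(n+1)$-gerbe is exactly the datum of an $n$-gerbe. For item (4), since $\pi_L = \id$ the relative forms $\cA^1(X/X, n+1)$ vanish, so the relevant fiber is all of $\cA^1(X, n+1)$; by the computation recalled just above, the one-form $f^*\nabla$ equals $\alpha - c_1(\cG)$, and with $\alpha = c_1(\cG)$ it is canonically zero, this canonical vanishing being the required trivialization.

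Second I would verify the two nondegeneracy conditions. Condition (1), that $\T^*[n+1]X\rightarrow X$ is a genuine Lagrangian fibration, is precisely \cref{prop:prequantumcotangent} (resting on \cite[Theorem 2.2]{CalaqueCotangent}). Condition (2), that the induced isotropic structure on $\Gamma_{c_1(\cG)}$ is nondegenerate, follows from the fact recorded just before the definition of the twisted cotangent bundle: the graph of a closed one-form of degree $(n+1)$ is Lagrangian in $\T^*[n+1]X$. Since $c_1(\cG)\in\cA^{1, \cl}(X, n+1)$ is closed, $\Gamma_{c_1(\cG)}$ is indeed Lagrangian.

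Since every substantive ingredient has been established earlier, I do not anticipate a real obstacle; the only step demanding attention is confirming in item (4) that trivializing $g^*\cG_X$ by $\cG$ genuinely identifies $f^*\nabla$ with $\alpha - c_1(\cG)$ — this uses both $\Gamma_\alpha^*\lambda = \alpha$ and the compatibility of the chosen trivialization with the connective structure $\lambda$. Once that identification is secured, setting $\alpha = c_1(\cG)$ makes the one-form vanish canonically and completes the verification.
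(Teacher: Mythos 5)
Your proposal is correct and follows essentially the same route as the paper: the paper's "proof" is exactly the discussion preceding the statement, where the trivial $(n+1)$-gerbe on the base, the Liouville connective structure from \cref{prop:prequantumcotangent}, the trivialization by $\cG$, and the identity $f^*\nabla = \alpha - c_1(\cG)$ (canonically zero for $\alpha = c_1(\cG)$) are assembled in the same way. Your explicit verification of the two nondegeneracy conditions — via \cref{prop:prequantumcotangent} and the Lagrangian property of graphs of closed one-forms — only makes explicit what the paper leaves implicit.
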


We are ready to formulate a large class of prequantum $n$-shifted Lagrangian fibrations.

\begin{thm}
Suppose $X$ is a derived Artin stack locally of finite presentation and $\cG$ an $n$-gerbe on $X$. Then
\[\T^*_{c_1(\cG)}[n] X\longrightarrow X\]
has a natural structure of a prequantum $n$-shifted Lagrangian fibration determined by $\cG$.
\label{thm:prequantumtwistedcotangent}
\end{thm}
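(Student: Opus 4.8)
The plan is to realise $\T^*_{c_1(\cG)}[n]X\to X$ as a derived intersection of prequantized Lagrangians inside the $(n+1)$-shifted cotangent bundle and then to invoke \cref{thm:prequantumLagrangianfibrationintersection}. This mirrors the non-prequantum \cref{prop:twistedcotangentfibration}, which produces the Lagrangian fibration on $\T^*_\alpha[n]X$ by applying \cref{thm:Lagrangianfibrationintersection} to the two transverse graphs $\Gamma_0,\Gamma_\alpha\colon X\to\T^*[n+1]X$; the present argument upgrades each ingredient with its prequantum data.

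First I would assemble the three inputs. Applying \cref{prop:prequantumcotangent} at shift $n+1$, the projection $\T^*[n+1]X\to X$ carries a prequantum $(n+1)$-shifted Lagrangian fibration structure, with trivial $(n+1)$-gerbe on the base $X$ and connective structure given by the Liouville one-form $\lambda$. By \cref{prop:prequantumcotangentsection} the graph $\Gamma_{c_1(\cG)}\colon X\to\T^*[n+1]X$, regarded over the identity fibration $X\to X$, acquires a prequantum $(n+1)$-shifted Lagrangian fibration structure whose trivialization of the (trivial) base gerbe is precisely $\cG$. Applying the same proposition to the trivial $n$-gerbe (so that the twisting class vanishes) equips the zero section $\Gamma_0\colon X\to\T^*[n+1]X$, again fibred over $X$ by the identity, with a prequantum $(n+1)$-shifted Lagrangian fibration structure whose base trivialization is trivial.

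Next I would feed these into the diagram of \cref{thm:prequantumLagrangianfibrationintersection}, taking the $(n+1)$-shifted symplectic stack to be $\T^*[n+1]X$, its base to be $X$, and the two Lagrangians with fibrations to be $\Gamma_0$ and $\Gamma_{c_1(\cG)}$, each fibred over $X$ via the identity. The fibre product of bases is $X\times_X X\cong X$, while the Lagrangian intersection is, by the definition of the twisted cotangent bundle,
\[\Gamma_0\times_{\T^*[n+1]X}\Gamma_{c_1(\cG)}\cong\T^*_{c_1(\cG)}[n]X.\]
Hence \cref{thm:prequantumLagrangianfibrationintersection} yields a prequantum $n$-shifted Lagrangian fibration on $\T^*_{c_1(\cG)}[n]X\to X$. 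Following the construction in the proof of that theorem, the induced $n$-gerbe on the base $X$ is the difference of the two base trivializations; since one is trivial and the other is $\cG$, this gerbe is canonically $\cG$, giving the asserted dependence on $\cG$.

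The compatibilities of the connective structures and base gerbes are exactly those threaded through \cref{thm:prequantumLagrangianfibrationintersection}, and the only nondegeneracy required is the one already supplied by \cref{thm:Lagrangianfibrationintersection} through \cref{prop:twistedcotangentfibration}. The single point needing care---and the mild obstacle---is the bookkeeping of trivializations, so that the output gerbe is identified with $\cG$ itself rather than a shift or inverse of it; this is a matter of tracking orientations in the limit construction underlying \cref{thm:prequantumLagrangianfibrationintersection} and presents no real difficulty.
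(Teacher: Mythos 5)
Your proposal is correct and follows essentially the same route as the paper: the paper's proof likewise intersects the two graphs $\Gamma_0,\Gamma_{c_1(\cG)}\colon X\to \T^*[n+1]X$, each viewed as a prequantum $(n+1)$-shifted Lagrangian fibration over $X$ via \cref{prop:prequantumcotangentsection}, and then applies \cref{thm:prequantumLagrangianfibrationintersection} to obtain the prequantum structure on $\T^*_{c_1(\cG)}[n]X\to X$. Your extra remarks (citing \cref{prop:prequantumcotangent} for the ambient fibration and tracking that the induced base gerbe is the difference of the two trivializations, hence canonically $\cG$) only make explicit what the paper leaves implicit.
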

\begin{proof}
Consider the diagram
\[
\xymatrix@R=0cm{
X \ar^-{\Gamma_{c_1(\cG)}}[dr] \ar@{=}[dd] && X \ar_-{\Gamma_0}[dl] \ar@{=}[dd] \\
& \T^*[n+1]X \ar[dd] & \\
X \ar@{=}[dr] && X \ar@{=}[dl] \\
& X &
}
\]
By \cref{prop:prequantumcotangentsection} each square has the structure of a prequantum $(n+1)$-shifted Lagrangian fibration. Therefore, by \cref{thm:prequantumLagrangianfibrationintersection} the induced map
\[\T^*_{c_1(\cG)}[n] X\longrightarrow X\]
has the structure of a prequantum $n$-shifted Lagrangian fibration.
\end{proof}

Let $f\colon Y\rightarrow X$ be a morphism of derived Artin stacks locally of finite presentation together with an $n$-gerbe $\cG$ on $X$ together with the trivialization of $f^*\cG$ on $Y$. Then one can similarly prove that the diagram
\[
\xymatrix{
\N^*_{c_1(\cG)}[n] Y \ar[r] \ar[d] & \T^*_{c_1(\cG)}[n] X \ar[d] \\
Y \ar[r] & X
}
\]
has the structure of a prequantum $n$-shifted Lagrangian fibration.

\subsection{Classifying stack}

Let $G$ be a reductive algebraic group and $\langle -, -\rangle$ is a nondegenerate invariant symmetric bilinear pairing on its Lie algebra $\g$. Then the classifying stack $\B G$ has a natural $2$-shifted symplectic structure. It has the following explicit form.

Consider the epimorphism $\pt\rightarrow \B G$. By descent
\[\cA^{2, \cl}(\B G, 2) \cong \lim_k \cA^{2, \cl}(G^k, 2),\]
where on the right we consider the totalization associated to the simplicial scheme
\[
\xymatrix{
\pt  & G \ar@<.5ex>[l] \ar@<-.5ex>[l]  & G\times G \ar@<.8ex>[l] \ar[l] \ar@<-.8ex>[l] & \ldots \ar@<1.5ex>[l] \ar@<.5ex>[l] \ar@<-.5ex>[l] \ar@<-1.5ex>[l]
}
\]

Since $G$ is affine, we see that a 2-shifted symplectic structure on $\B G$ corresponds to a two-form $\omega$ on $G\times G$ and a three-form $H$ on $G$ which together satisfy
\begin{align*}
m_{23}^*\omega + p_{23}^* \omega &= m_{12}^*\omega + p_{12}^*\omega \\
m^* H &= p_1^* H + p_2^* H + \ddr \omega \\
\ddr H &= 0
\end{align*}
where $p$ and $m$ denote projection and multiplication maps between a number of copies of $G$. Let $\theta, \overline{\theta}\in\Omega^1(G; \g)$ be the left and right Maurer--Cartan forms. It is shown in \cite{SafronovCS} that the 2-shifted symplectic structure on $\B G$ is represented by
\[\omega = \frac{1}{2}\langle p_1^*\theta, p_2^*\overline{\theta}\rangle\in\Omega^2(G\times G),\qquad H = \frac{1}{12}\langle\theta, [\theta, \theta]\rangle\in\Omega^3(G).\]

Let us first observe that we should not expect an existence of a Lagrangian fibration on $\B G$.

\begin{prop}
Suppose $X\rightarrow B$ is an $n$-shifted Lagrangian fibration, where $n$ is even. Then the virtual dimension $\dim(X)$ is even.
\label{prop:evendimension}
\end{prop}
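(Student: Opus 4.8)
The plan is to extract a numerical constraint from the fiber sequence \eqref{eq:ndisotropicfibration} that defines a Lagrangian fibration. Recall that for an $n$-shifted Lagrangian fibration $\pi\colon X\rightarrow B$ with $X$ locally of finite presentation, the defining datum is that
\[\bT_{X/B}\longrightarrow \bT_X\longrightarrow \bL_{X/B}[n]\]
is a fiber sequence in $\QCoh(X)$. The first step is to pass to (virtual) ranks, i.e. to apply the Euler characteristic, which is additive in fiber (cofiber) sequences. Writing $\dim(X)$ for the virtual dimension, i.e. the rank of $\bT_X$, and denoting by $r$ the rank of the relative tangent complex $\bT_{X/B}$, the additivity of rank in the fiber sequence gives
\[\dim(X) = \mathrm{rank}(\bT_{X/B}) + \mathrm{rank}(\bL_{X/B}[n]).\]

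The second step is to compute the rank of the third term. Since $\bL_{X/B} = \bT_{X/B}^\vee$ (both are perfect because $X\rightarrow B$ is locally of finite presentation) and dualizing preserves rank, we have $\mathrm{rank}(\bL_{X/B}) = \mathrm{rank}(\bT_{X/B}) = r$. The shift by $[n]$ multiplies the Euler characteristic by $(-1)^n$, so $\mathrm{rank}(\bL_{X/B}[n]) = (-1)^n r$. Substituting into the displayed equation yields
\[\dim(X) = r + (-1)^n r = \bigl(1 + (-1)^n\bigr)\, r.\]

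The third and final step is the parity observation: when $n$ is even, $1 + (-1)^n = 2$, so $\dim(X) = 2r$ is manifestly even. This completes the argument.

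The main subtlety — rather than a genuine obstacle — is making sure the notion of ``rank'' (virtual dimension) is well-defined and additive in the relevant setting, i.e. that $\bT_X$, $\bT_{X/B}$ and $\bL_{X/B}$ are all perfect complexes so that they possess a well-defined Euler characteristic, and that this invariant is genuinely additive along fiber sequences and multiplied by $(-1)^n$ under the shift $[n]$. Both facts hold because $X$ is assumed locally of finite presentation (whence $\bL_X$, and relatively $\bL_{X/B}$, are perfect) and because Euler characteristic is the image of the $K$-theory class, which is additive in exact triangles and sends $[n]$ to multiplication by $(-1)^n$. Note also that one only needs the \emph{isotropic fibration} part of the structure insofar as it produces the sequence \eqref{eq:ndisotropicfibration}; the full Lagrangian (fiber-sequence) condition is exactly what makes the rank computation close up. The oddness of $n$ would instead give $\dim(X) = 0$, recovering the expected cancellation in the odd-shifted case.
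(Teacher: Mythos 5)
Your proof is correct and follows essentially the same route as the paper: extract the fiber sequence $\bT_{X/B}\rightarrow \bT_X\rightarrow \bL_{X/B}[n]$, use additivity of the Euler characteristic, and note that duality preserves rank while the shift $[n]$ contributes a factor $(-1)^n$, which equals $1$ for even $n$, giving $\dim(X) = 2\,\mathrm{rank}(\bT_{X/B})$. Your closing observation that odd $n$ would force $\dim(X)=0$ is a correct bonus consistent with the nondegeneracy $\bT_X\cong\bL_X[n]$.
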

\begin{proof}
Consider the fiber sequence
\[\bT_{X/B}\longrightarrow \bT_X\longrightarrow \bL_{X/B}[n]\]
in $\QCoh(X)$. We have $\dim(\bL_{X/B}[n]) = \dim(\bL_{X/B}) = \dim(\bT_{X/B})$, where the first equality uses the fact that $n$ is even. By additivity of the Euler characteristic we obtain
\[\dim(X) = 2\dim(\bT_{X/B}).\]
\end{proof}

\begin{cor}
The $2$-shifted symplectic stack $\B\SL_2$ does not admit a Lagrangian fibration.
\end{cor}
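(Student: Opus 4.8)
The plan is to apply \cref{prop:evendimension} with $n = 2$, reducing the statement to a virtual dimension count. First I would observe that $\B\SL_2$ is $2$-shifted symplectic (as recalled above for any reductive group equipped with a nondegenerate invariant pairing), and that $n = 2$ is even, so the hypotheses of \cref{prop:evendimension} are met: any $2$-shifted Lagrangian fibration $X \rightarrow B$ forces the virtual dimension $\dim(X)$ to be even.

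Next I would compute the virtual dimension of $\B\SL_2$. The tangent complex of $\B G$ at the basepoint is the adjoint representation placed in cohomological degree $-1$, i.e. $\bT_{\B G}\cong \g[1]$, so by additivity of the Euler characteristic the virtual dimension is $\dim(\B G) = -\dim(G)$. For $G = \SL_2$ we have $\dim(\SL_2) = 3$, and hence $\dim(\B\SL_2) = -3$, which is odd.

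Finally, combining the two observations gives the result by contradiction: since $\B\SL_2$ is $2$-shifted symplectic, any Lagrangian fibration on it is in particular a $2$-shifted Lagrangian fibration, so \cref{prop:evendimension} would force $\dim(\B\SL_2)$ to be even, contradicting $\dim(\B\SL_2) = -3$. There is no genuine obstacle here; the statement is a direct corollary, and the only point requiring any care is the virtual dimension computation, which hinges precisely on the structure group $\SL_2$ being odd-dimensional.
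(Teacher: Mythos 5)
Your proposal is correct and is exactly the paper's argument: the paper's proof is a one-line appeal to \cref{prop:evendimension} together with the computation $\dim(\B\SL_2) = -\dim(\sl_2) = -3$, which is precisely the dimension count you spell out via $\bT_{\B G}\cong\g[1]$. Your write-up just makes the implicit steps (evenness of $n=2$, additivity of the Euler characteristic) explicit.
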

\begin{proof}
Indeed, $\dim(\B\SL_2) = -\dim(\sl_2) = -3$.
\end{proof}

Since both gerbes and differential forms satisfy \'{e}tale descent, so do gerbes with connective structures. Thus, a prequantization of $\B G$ is given by the following data:
\begin{itemize}
\item A gerbe with a connective structure $(\cG, \nabla_1)$ on $G$ with $\curv(\cG, \nabla) = H$.

\item A trivialization $(\cL, \nabla_2)$ of $m^*(\cG, \nabla_1)\otimes p_1^*(\cG, \nabla_1)^{-1}\otimes p_2^*(\cG, \nabla_2)^{-1}$ on $G\times G$ with $\curv(\cL, \nabla_2) = \omega$.

\item An isomorphism $m_{23}^*(\cL, \nabla_2)\otimes p_{23}^*(\cL, \nabla_2)\cong m_{12}^*(\cL, \nabla_2)\otimes p_{12}^*(\cL, \nabla_2)$ on $G^3$.

\item A coherence condition for the above isomorphisms on $G^4$.
\end{itemize}

\begin{prop}
Suppose $G$ is a split connected reductive group. The $2$-shifted symplectic stack $\B G$ does not admit a prequantization.
\label{prop:BGprequantization}
\end{prop}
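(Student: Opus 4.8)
The plan is to show that the existence of a prequantization of the $2$-shifted symplectic stack $\B G$ would force the characteristic $3$-form $H = \frac{1}{12}\langle\theta, [\theta, \theta]\rangle$ on $G$ to arise as the curvature of a gerbe with connective structure, and that this is obstructed by the algebraic de Rham cohomology of $G$. Recall from the explicit description preceding the statement that a prequantization requires, as its first piece of data, a gerbe with connective structure $(\cG, \nabla_1)$ on $G$ satisfying $\curv(\cG, \nabla_1) = H$. By the definition of curvature via the diagram displaying $\cA^{2,\cl}(n)\to\cA^{1,\cl}(n+1)\to\cA^1(n+1)$, the existence of such a connective structure means precisely that the image of $c_1(\cG)\in\cA^{1,\cl}(G, 2)$ in $\cA^1(G, 2)$ is nullhomotopic, with $H$ recording the resulting closed $2$-form of degree $1$. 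First I would translate this into a cohomological statement: the obstruction to finding a connective structure on an $n$-gerbe $\cG$ on a smooth affine scheme lives in $\H^0(G, \Omega^{\geq 2})$ together with the Hodge-theoretic pieces, and curvature realizes $H$ as a genuinely nontrivial class.

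The key step is to identify the obstruction. On a smooth affine scheme such as $G$, a closed $3$-form of degree $0$ is exact as soon as the corresponding class in $\H^3_{\dR}(G)$ vanishes, but conversely the curvature $H$ of a prequantization must be cohomologically exact for the gerbe-with-connective-structure data to exist, since it must equal $\ddr$ of a globally defined $2$-form built from $\nabla_1$ in an appropriate sense. So I would compute the de Rham cohomology class $[H]\in\H^3_{\dR}(G)$. For $G$ a split connected reductive group, the Cartan $3$-form $\frac{1}{12}\langle\theta,[\theta,\theta]\rangle$ is the algebraic representative of the generator of $\H^3(G)$ coming from the simple factors, and crucially $\langle-,-\rangle$ is assumed nondegenerate, which pins down its pairing with the bi-invariant $3$-cycles and shows $[H]\neq 0$. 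Thus $H$ is not exact, contradicting the requirement that it be the curvature of a connective structure on $G$.

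The main obstacle I expect is making precise and rigorous the claim that $[H]\neq 0$ in algebraic de Rham cohomology, and linking it cleanly to nonexistence of the connective structure. Two subtleties arise. First, one must verify that $\H^3_{\dR}(G)$ in the \emph{algebraic} setting is computed by bi-invariant forms, which for split reductive $G$ follows from the structure of $\g$ and the fact that $\H^\bullet_{\dR}(G)\cong \H^\bullet(\g)$ (Lie algebra cohomology) in characteristic zero; the nondegeneracy of $\langle-,-\rangle$ then guarantees the Cartan class is nonzero, since it is the cohomology class dual to the invariant pairing under the standard isomorphism $\H^3(\g)\cong (\Sym^2\g^*)^G_{\mathrm{nondeg}}$. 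Second, one must argue that a nonzero $[H]$ genuinely obstructs prequantization and is not circumvented by the descent data over $\B G$: here I would use that the connective structure on $\cG$ on $G$ is required \emph{before} imposing the multiplicativity constraints, so its curvature $H$ must already be realized as $\ddr(\text{something global})$, which fails precisely because $[H]\neq 0$. I would therefore structure the proof as: (i) reduce to the existence of $(\cG,\nabla_1)$ with $\curv = H$ on $G$; (ii) show this forces $[H] = 0$ in $\H^3_{\dR}(G)$; (iii) compute $[H]\neq 0$ using nondegeneracy of $\langle-,-\rangle$ and the identification of algebraic de Rham cohomology of $G$ with Lie algebra cohomology, deriving the desired contradiction.
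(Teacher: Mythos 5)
Your strategy --- reduce to the existence of a gerbe with connective structure $(\cG,\nabla_1)$ on $G$ with curvature $H$, and obstruct its existence via the class $[H]\in\H^3_{\dR}(G)$ --- is essentially the paper's own argument, but only for the case where the derived subgroup of $G$ is nontrivial, and it cannot prove the proposition as stated. The key claim in your step (iii) is false when $G$ is a torus: if $G$ is abelian then $[\theta,\theta]=0$, so $H=\tfrac{1}{12}\langle\theta,[\theta,\theta]\rangle$ vanishes \emph{identically}, even though a nondegenerate invariant pairing on $\h$ still exists and still defines a $2$-shifted symplectic structure on $\B G$. More generally, the Cartan class only sees the restriction of $\langle-,-\rangle$ to $[\g,\g]$ (the center $\mathfrak{z}$ contributes nothing to $H$), so your obstruction is vacuous exactly when $G$ is a torus --- a case allowed by the hypothesis ``split connected reductive''. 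The paper must, and does, treat tori by a genuinely different mechanism: there the obstruction lives in the next layer of the descent data along $\pt\rightarrow\B G$, namely the multiplicative trivialization $(\cL,\nabla_2)$ on $G\times G$, whose curvature is required to be $\omega=\tfrac12\langle p_1^*\theta,p_2^*\overline{\theta}\rangle$. Since every $\Gm$-gerbe on a smooth scheme is torsion, a power of $(\cL,\nabla_2)$ is an honest line bundle with connection; since $\Pic(\Gm\times\Gm)=0$, its curvature is exact; but $\omega=\dlog x\wedge\dlog y$ on $\Gm\times\Gm$ is not exact. Your proof has no counterpart to this step, so the proposition remains unproved for tori (and hence in general).

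There is a second, smaller gap in your step (ii): you assert that the curvature of $(\cG,\nabla_1)$ ``must equal $\ddr$ of a globally defined $2$-form'', but this is only true once one knows that $c_1(\cG)=0$ in $\cA^{1,\cl}(G,2)$. A priori the gerbe itself could carry a nonzero de Rham characteristic class, in which case the curvature would represent that class rather than an exact form, and no contradiction with $[H]\neq 0$ would arise. The paper closes this hole by citing Grothendieck's theorem that every $\Gm$-gerbe on a smooth scheme is torsion (hence $c_1(\cG)=0$ rationally, so in particular in de Rham terms in characteristic zero), and then uses affineness of $G$ to see that the connective structure amounts to a one-form of degree $1$, which is nullhomotopic, forcing the curvature to be nullhomotopic in $\cA^{2,\cl}(G,1)$ where $\pi_0$ is $\H^3_{\dR}(G)$. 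With that input your steps (i)--(iii) do go through, and they then extend to reductive $G$ with nontrivial derived subgroup by restricting the pairing to the derived subgroup --- but the torus case still requires the separate argument above.
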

\begin{proof}
Since $G$ is smooth, by \cite[Proposition 1.4]{GrothendieckBrauer} every $\Gm$-gerbe $\cG$ on $G$ is torsion. In particular, $c_1(\cG) = 0\in\cA^{1, \cl}(G, 2)$. Therefore, a connective structure $\nabla$ on $\cG$ corresponds to a one-form $\alpha\in\cA^1(G, 1)$ of degree 1. Since $G$ is affine, $\alpha$ is nullhomotopic. Therefore, $\curv(\cG, \nabla)$ is nullhomotopic in $\cA^{2, \cl}(G, 1)$. Since $G$ is affine, we also have $\pi_0(\cA^{2, \cl}(G, 1)) = \H^3_{\dR}(G)$.

Let us now assume that $G$ is semisimple. It is well-known that in this case $H\in\H^3_{\dR}(G)$ represents a nonzero cohomology class, so there does not exist a gerbe $(\cG, \nabla)$ with a connective structure whose curvature is $H$.

If $G$ is reductive, a nondegenerate invariant symmetric bilinear form $\langle -, -\rangle$ restricts to such on the derived subgroup. So, if the derived subgroup is nontrivial, the gerbe $(\cG, \nabla)$ does not exist either.

Let us assume that the derived group is trivial, i.e. $G$ is a torus. In this case $H = 0$ and $\omega$ is a closed multiplicative two-form on $G\times G$. Since $\cG$ is torsion, a power of $(\cL, \nabla_2)$ is a line bundle with a connection. Since $\Pic(G) = 0$, $\curv(\cL, \nabla_2)$ is exact. Let us assume $G=\Gm$ for simplicity. Then $\omega = \frac{\ddr x}{x}\wedge \frac{\ddr y}{y}$ for $x,y$ the natural coordinates on $\Gm\times \Gm$. In particular, it is not exact.
\end{proof}

Assume $\langle -, -\rangle$ is integral, i.e. it comes from an integral Weyl-invariant quadratic form on the cocharacter lattice. If we regard $G$ as a complex Lie group, a prequantization exists. Indeed, Brylinski and Deligne \cite{BrylinskiDeligne} construct a multiplicative $\cK_2$-torsor on $G$ associated to $\langle-, -\rangle$, where $\cK_2$ is the Zariski sheafification of the Quillen's $K_2$ functor. Applying the Beilinson regulator \cite{Beilinson,BrylinskiRegulator,BrylinskiGerbes} one obtains a multiplicative gerbe with a connective structure.

In the setting of smooth manifolds Waldorf \cite{Waldorf} has constructed the corresponding multiplicative gerbe with a connective structure over any compact simple simply-connected Lie group. The corresponding 2-gerbe over $\B G$ is known as the \emph{Chern--Simons 2-gerbe} \cite{CJMSW}.

\subsection{$(-1)$-shifted prequantization}

Let $X$ be a $(-1)$-shifted symplectic stack. In \cref{sect:prequantization} we have defined prequantizations of $n$-shifted symplectic stacks for $n\geq 0$. The case $n=-1$ is implicitly given by \cref{def:prequantumLagrangian} since a $(-1)$-shifted symplectic stack $X$ gives rise to a $0$-shifted Lagrangian structure on the canonical projection $X\rightarrow \pt$, where we view $\pt$ as a $0$-shifted symplectic stack in the obvious way. Let us unpack this definition.

\begin{itemize}
\item A $(-1)$-gerbe is the same as an invertible function $f_0\colon X\rightarrow \Gm$.

\item A connective structure on a $(-1)$-gerbe is given by a one-form $h_1$ on $X$ of degree $-1$ such that
\begin{equation}
\ddr f_0 / f_0 + \d h_1 = 0.
\label{eq:m1connection1}
\end{equation}
Its curvature is $\ddr h_1\in\cA^{2, \cl}(X, -1)$.

\item A homotopy between the above curvature and a given $(-1)$-shifted symplectic structure $\omega = \omega_2 + \omega_3 + \dots$ is given by a collection $\{h_2, h_3, \dots\}$ of forms, where $h_p$ is a $p$-form of degree $1-p$ which satisfy the equations
\begin{align}
\ddr h_1 + \omega_2 + \d h_2 &=  0 \nonumber \\
\ddr h_2 + \omega_3 + \d h_3 &= 0 \label{eq:m1connection2} \\
\dots \nonumber
\end{align}
\end{itemize}

Let us introduce the notation
\[f_0 + f_1 + \dots = f_0\exp(\sum h_i).\]
Then the equations \eqref{eq:m1connection1} and \eqref{eq:m1connection2} can be written as
\[(\d + \ddr + \omega\wedge)(f_0 + f_1 + \dots) = 0.\]
Let us rephrase it in the following way. Recall from \cite[Section 1.3]{CPTVV} that given a graded complex $(A = \oplus_n A(n), \d)$ with a mixed structure $\epsilon$ its \emph{realization} is
\[|A| = \prod_{n\geq 0} A(n)\]
with the differential $\d + \epsilon$. The graded commutative algebra of differential forms $\DR(X)$ admits a (weak) mixed structure given by $\epsilon = \ddr + \omega\wedge(-)$, which is square-zero precisely because $\omega$ is a closed form. In particular, we may consider its realization in the above sense.

\begin{remark}
The differential $\epsilon = \ddr + \omega\wedge(-)$ splits into components which increase weights by positive integers. This is called a weak mixed structure in \cite{CPTVV}.
\end{remark}

\begin{defn}
Let $X$ be a derived Artin stack locally of finite presentation and $\alpha\in\cA^{1, \cl}(X, 0)$ a closed one-form of degree $0$. The \defterm{$\alpha$-twisted de Rham complex} is the complex
\[\Omega_\alpha(X) = |(\DR(X), \ddr + \alpha\wedge(-))|.\]
\end{defn}

Note that by construction there is a natural projection $\Omega_\alpha(X)\rightarrow \cO(X)$.

\begin{example}
Let $X$ be a smooth variety and $f\colon X\rightarrow \bA^1$ a function. Then $\ddr f\in\cA^{1, \cl}(X, 0)$ is a closed one-form of degree 0. Then $\Omega_{\ddr f}(X)$ is the usual twisted de Rham complex with the differential $\ddr + (\ddr f)\wedge(-)$.
\end{example}

We can summarize the above discussion as follows.

\begin{prop}
Let $(X, \omega)$ be a $(-1)$-shifted symplectic stack. A prequantization of $X$ is the same as a closed element $f\in\Omega_\omega(X)$ whose projection to $\cO(X)$ is invertible.
\end{prop}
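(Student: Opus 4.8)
The plan is to recognize that the statement is essentially a repackaging of the preceding discussion, so the only genuine content is the change of variables $f = f_0\exp(\sum_i h_i)$ together with the verification that it converts the system \eqref{eq:m1connection1}, \eqref{eq:m1connection2} into the single closedness equation $(\d+\ddr+\omega\wedge)f = 0$. First I would recall, from the unpacking already carried out, that a prequantization of $(X,\omega)$ is exactly an invertible function $f_0\in\cO(X)$ together with forms $h_1, h_2,\dots$ (the component $h_p$ of weight $p$) satisfying \eqref{eq:m1connection1} and \eqref{eq:m1connection2}. On the other side, by definition of the realization, a cohomological-degree-$0$ element of $\Omega_\omega(X)$ is a family $f = \sum_{p\geq 0} f_p$ with $f_p$ of weight $p$ and cohomological degree $0$, its projection to $\cO(X)$ is the weight-$0$ component $f_0$, and closedness means $(\d+\ddr+\omega\wedge)f = 0$. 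The requirement that $f$ sit in cohomological degree $0$ is what pins down the cohomological degrees of the $h_p$.

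For the forward direction I would set $f := f_0\exp(\sum_i h_i)$, so that $f_p = f_0\cdot(\text{weight-}p\text{ part of }\exp(\sum_i h_i))$ and in particular the projection to $\cO(X)$ is the invertible $f_0$. I would then expand $(\d+\ddr+\omega\wedge)f$ weight by weight. Since $f_0$ is a cocycle the weight-$0$ component vanishes automatically; the weight-$1$ component is $f_0(\d h_1 + \ddr f_0/f_0)$, which vanishes precisely by \eqref{eq:m1connection1}; and the weight-$2$ component, after using that $\d$ and $\ddr$ are derivations and substituting \eqref{eq:m1connection1}, collapses to $f_0(\d h_2 + \ddr h_1 + \omega_2)$, vanishing precisely by the first line of \eqref{eq:m1connection2}. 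The role of the exponential is exactly that the quadratic cross-terms (for instance $h_1\,\d h_1$ against $(\ddr f_0/f_0)\,h_1$) cancel once the lower-weight equations are imposed. An induction on the weight should then show that the weight-$p$ component of $(\d+\ddr+\omega\wedge)f$ equals $f_0$ times $\ddr h_{p-1} + \omega_p + \d h_p$, so that $f$ is closed if and only if $(f_0, h_1, h_2,\dots)$ is a prequantization.

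For the converse I would begin with a closed $f\in\Omega_\omega(X)$ in cohomological degree $0$ whose weight-$0$ part $f_0$ is invertible, and put $g := \log(f/f_0)$. Because $f/f_0 = 1 + (\text{strictly positive weight})$, this logarithm is well-defined as a sum that is finite in each fixed weight, and its weight-$p$ component $h_p := g_p$ has the required weight and cohomological degree. Running the weight-by-weight computation of the forward direction in reverse recovers \eqref{eq:m1connection1} and \eqref{eq:m1connection2} from the closedness of $f$. Since $\exp$ and $\log$ are mutually inverse and natural in $X$, the two constructions assemble into an equivalence between the space of prequantizations of $X$ and the space of closed cohomological-degree-$0$ elements of $\Omega_\omega(X)$ with invertible projection to $\cO(X)$.

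I expect the main obstacle to be bookkeeping rather than anything conceptual: carefully tracking the Koszul signs in the graded-commutative algebra $\DR(X)$ and checking, in the general inductive step, that all the cross-terms produced by differentiating $\exp(\sum_i h_i)$ cancel against the lower-weight instances of \eqref{eq:m1connection2}, leaving only the linear term $f_0(\ddr h_{p-1} + \omega_p + \d h_p)$ in each weight. A secondary point to keep clean is the convergence of $\log(f/f_0)$, which rests on the invertibility of $f_0$ and on the positive-weight (hence weight-nilpotent in each fixed weight) nature of $f/f_0 - 1$.
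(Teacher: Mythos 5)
Your proposal is correct and follows essentially the paper's own route: the paper likewise introduces $f_0+f_1+\dots = f_0\exp(\sum_i h_i)$ and observes that the prequantization equations \eqref{eq:m1connection1} and \eqref{eq:m1connection2} become the single closedness equation $(\d+\ddr+\omega\wedge)f=0$ in $\Omega_\omega(X)$, this change of variables being the entire content of the proposition. One small streamlining of your inductive step: since each $h_p$ has cohomological degree zero, the Leibniz rule gives the identity $(\d+\ddr+\omega\wedge)\bigl(f_0e^{h}\bigr) = f_0e^{h}\bigl(\d h+\ddr h+\ddr f_0/f_0+\omega\bigr)$ on the nose, so the cross-term cancellations you anticipate can be bypassed by dividing by the invertible element $f_0e^{h}$ and reading off \eqref{eq:m1connection1} and \eqref{eq:m1connection2} weight by weight.
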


We may similarly rephrase the notion of a prequantization of a $0$-shifted Lagrangian morphism $L\rightarrow X$. Let $X$ be a derived Artin stack locally of finite presentation. Recall (e.g. see \cite[Section 7]{GRCrys}) that for any closed one-form $\alpha\in\cA^{1, \cl}(X, 1)$ on $X$ of degree $1$ one can define the $\infty$-category $\cD_\alpha(X)$ of $\alpha$-twisted $D$-modules. A line bundle with a connection $(\cL, \nabla)$ on $X$ with curvature $\curv(\cL, \nabla) = \omega$ gives rise to an object $(\cL, \nabla)\in\cD_\alpha(X)$.

\begin{prop}
Let $(X, \omega)$ be a $0$-shifted symplectic stack. A prequantization of $X$ is the same as an object $(\cL, \nabla)\in\cD_\omega(X)$ whose image in $\QCoh(X)$ is an invertible sheaf concentrated in degree 0.
\end{prop}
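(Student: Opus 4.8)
The plan is to show that, after unwinding, each side amounts to the datum of an invertible sheaf $\cL$ on $X$ concentrated in degree $0$ together with a connection of curvature $\omega$, and then to match the two packagings. For the prequantization side I would specialize \cref{def:prequantumsymplectic} to $n=0$. A $0$-gerbe on $X$ is a map $X\rightarrow\B\Gm$, which by the identification of $\B\Gm$ with the stack of invertible sheaves is the same as an invertible $\cL\in\QCoh(X)$; a connective structure is a nullhomotopy of $c_1(\cL)\in\cA^1(X,1)$, which in the $n=0$ case is precisely the usual notion of a connection $\nabla$ on $\cL$; and the equivalence $\curv(\cG,\nabla)=\omega$ in $\cA^{2,\cl}(X,0)$ is exactly the condition that this connection have curvature $\omega$. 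Thus a prequantization of $(X,\omega)$ is precisely an invertible sheaf in degree $0$ equipped with a connection of curvature $\omega$.

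For the $\cD_\omega(X)$ side I would use that, by construction (see \cite{GRCrys}), the forgetful functor $\cD_\omega(X)\rightarrow\QCoh(X)$ sends an $\omega$-twisted $D$-module to its underlying quasi-coherent sheaf, obtained by pullback along $X\rightarrow X_{\dR}$, and that an object lying over a fixed invertible $\cL$ in degree $0$ is the same as an $\omega$-twisted flat connection on $\cL$. As already noted before the statement, such a twisted flat connection is a connection whose central curvature is $\omega$, so both sides produce the same classifying space. Concretely, I would construct the comparison map from the space of prequantizations to the space of line-bundle objects of $\cD_\omega(X)$, sending $(\cL,\nabla)$ to the object it defines, and check it is an equivalence of spaces by comparing fibers over a fixed $\cL$: on each side the space of structures refining $\cL$ is a torsor under the space $\cA^{1,\cl}(X,0)$ of closed one-forms of degree $0$ (connections with a fixed curvature differ by closed one-forms), so it suffices to match these two torsor structures.

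The main obstacle is precisely this last comparison. The gerbe-theoretic curvature is defined through the pullback square relating $\cA^{2,\cl}(0)$, $\cA^{1,\cl}(1)$ and $\cA^1(1)$, whereas the twist in $\cD_\omega(X)$ is defined through the de Rham stack $X_{\dR}$; reconciling these two presentations of ``connection with curvature $\omega$'' as an equivalence of spaces, and not merely on $\pi_0$, is where the real work lies. I expect this to reduce to the compatibility of the first Chern class $c_1\colon\B\Gm\rightarrow\cA^{1,\cl}(1)$ with the identification of $\cA^{1,\cl}(X,1)$-twists with line-bundle twists of $X_{\dR}$ used to define $\cD_\omega(X)$, but making this precise requires unwinding the construction in \cite{GRCrys}.
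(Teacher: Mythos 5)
You should know at the outset that the paper offers no proof of this proposition at all: it is stated as a summary of the short discussion preceding it, whose only substantive ingredient is the assertion, delegated to \cite[Section 7]{GRCrys}, that a line bundle with a connection of curvature $\omega$ defines an object of $\cD_\omega(X)$. Your unwinding of \cref{def:prequantumsymplectic} at $n=0$ is correct and matches exactly what the paper leaves implicit: a $0$-gerbe is a map $X\rightarrow\B\Gm$, hence an invertible sheaf concentrated in degree $0$; a connective structure is a nullhomotopy of $c_1(\cL)\in\cA^1(X,1)$, which amounts to a connection on $\cL$; and the curvature extracted from the pullback square defining $\curv$ is the usual one. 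Where you genuinely go beyond the paper is in attempting an equivalence-of-spaces argument, via the torsor comparison over a fixed $\cL$, rather than treating the statement as a definitional repackaging. Two caveats on that step. First, the two fibers over a fixed $\cL$ are only pseudo-torsors under $\cA^{1,\cl}(X,0)$ --- either may be empty --- so an equivariant comparison map between them is an equivalence only once one also knows that nonemptiness propagates from the $\cD_\omega(X)$ side back to the prequantization side; your map, constructed from prequantizations to twisted $D$-modules, gives only the forward implication, and the reverse one must come out of the same unwinding of \cite{GRCrys} that you defer. Second, that deferred step --- reconciling the gerbe-theoretic presentation of ``connection with curvature $\omega$'' with the de Rham-stack presentation of the twisting, as spaces and not merely on $\pi_0$ --- is precisely the point the paper disposes of by citation. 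So your proposal is not missing anything the paper actually supplies, and its diagnosis of where the real work lies is accurate; but to be self-contained it would still need to carry out that identification, for instance by checking that the twisting of $X_{\dR}$ attached to $\omega$ in \cite{GRCrys} restricts on $X$ to the gerbe-with-connective-structure data of \cref{def:prequantumsymplectic} compatibly with the two notions of curvature.
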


We will now discuss a relationship between $(-1)$-shifted prequantization and the BV formalism. From now on we denote by $Y$ a smooth scheme. Consider $\Gamma(Y, \Sym(\T_Y[1]))$, the algebra of polyvector fields. There is an obvious isomorphism
\[\Gamma(Y, \Sym(\Omega^1_Y[-1]))\cong \Gamma(Y, \Sym(\T_Y[1])\otimes K_Y)[-\dim Y]\]
under which the de Rham differential $\ddr$ goes to the divergence operator $\dive$. Let $X = \T^*[-1] Y$ and $\pi\colon X\rightarrow Y$ the projection. Then
\[\Gamma(Y, \Sym(\T_Y[1])\otimes K_Y)[-\dim Y]\cong \Gamma(X, \pi^* K_Y)[-\dim Y].\]
Under this isomorphism the divergence operator $\dive$ goes to the BV Laplacian $\Delta$ \cite{WittenBV}.

\begin{example}
Choose \'{e}tale coordinates $\{x_1, \dots, x_n\}$ on an open subset of $Y$. Let $\{p_1, \dots, p_n\}$ be the dual coordinates along the fibers of $\T^*[-1] Y\rightarrow Y$. Let $\omega=\ddr x_1\wedge \dots \ddr x_n$. Then
\[\Delta(f(p, x)\omega) = \sum_{i=1}^n \frac{\partial^2 f}{\partial p_i \partial x_i} \omega.\]
\end{example}

\begin{prop}
Let $\omega$ be the standard $(-1)$-shifted symplectic structure on $\T^*[-1] Y$. Then we have quasi-isomorphisms
\[\Omega_\omega(\T^*[-1] Y)\cong \Omega(Y)\cong (\Gamma(\T^*[-1]Y, \pi^* K_Y)[-\dim Y], \Delta).\]
\label{prop:BVquantization}
\end{prop}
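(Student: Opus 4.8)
The plan is to reduce the first quasi-isomorphism $\Omega_\omega(\T^*[-1]Y)\cong\Omega(Y)$ to the second one, which is precisely the identification of the de Rham complex $\Omega(Y)=|(\DR(Y),\ddr)|$ with the complex of polyvector fields carrying the divergence operator, recalled in the paragraph preceding the statement; that part I would simply invoke. So the content is the first isomorphism, which I would obtain in two steps: trivialize the twist using the Liouville form, then contract the shifted cotangent fibres.

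The key observation is that the $(-1)$-shifted symplectic form is exact for the canonical, globally defined Liouville one-form: writing $\lambda\in\fib(\cA^1(\T^*[-1]Y,-1)\to\cA^1(Y,-1))$ for the Liouville form of $\T^*[-1]Y$, one has $\ddr\lambda=\omega$. Since $\lambda$ has weight $1$ and cohomological degree $0$, it is an \emph{even} element of the graded commutative algebra $\DR(\T^*[-1]Y)$, so $e^\lambda=\sum_{k\geq 0}\lambda^k/k!$ is a well-defined invertible element of the realization $\prod_{w\geq 0}\DR(\T^*[-1]Y)(w)$; because $\lambda$ raises weight by one, only finitely many summands contribute to each target weight, so multiplication $M_{e^\lambda}$ is a genuine (pro-unipotent) automorphism of the product. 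Using $\ddr(e^\lambda)=e^\lambda\,\omega$ and the Leibniz rule (valid since $\lambda$ is even) I would check the conjugation identity
\[
\ddr\circ M_{e^\lambda}=M_{e^\lambda}\circ\bigl(\ddr+\omega\wedge(-)\bigr).
\]
As $Y$ is smooth, $\cO(\T^*[-1]Y)=\Sym_{\cO_Y}(\T_Y[1])$ carries no internal differential, the generators $x_i,p_i,\ddr x_i,\ddr p_i$ have no differential either, so the cohomological differential of $\DR(\T^*[-1]Y)$ vanishes and in particular commutes with $M_{e^\lambda}$. Hence $M_{e^\lambda}$ is an isomorphism of complexes
\[
\Omega_\omega(\T^*[-1]Y)=|(\DR(\T^*[-1]Y),\ddr+\omega\wedge(-))|\ \xrightarrow{\ \sim\ }\ |(\DR(\T^*[-1]Y),\ddr)|=\Omega(\T^*[-1]Y).
\]

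It then remains to show that $\pi\colon\T^*[-1]Y\to Y$ induces a quasi-isomorphism $\Omega(\T^*[-1]Y)\simeq\Omega(Y)$ of (untwisted) de Rham complexes. This is homotopy invariance along the shifted cotangent fibres: $\DR(\T^*[-1]Y/Y)$ is the de Rham complex of the odd affine fibres $\T^*_y[-1]$, and a relative Poincaré lemma, reducing to the one-variable computation $|(\Lambda[p]\otimes k[\ddr p],\ddr)|\simeq k$ and extended multiplicatively, shows its realization is acyclic beyond $\cO_Y$, so $\pi^*$ is a quasi-isomorphism. Equivalently, since $\Sym_{\cO_Y}(\T_Y[1])$ sits in non-positive degrees with $\pi_0=\cO_Y$, the classical truncation of $\T^*[-1]Y$ is $Y$, whence $(\T^*[-1]Y)_{\dR}\cong Y_{\dR}$ and the de Rham complexes agree. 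Composing these quasi-isomorphisms with the polyvector identification gives the stated chain $\Omega_\omega(\T^*[-1]Y)\cong\Omega(Y)\cong(\Gamma(\T^*[-1]Y,\pi^*K_Y)[-\dim Y],\Delta)$.

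I expect the main obstacle to be the bookkeeping in the first step: confirming that $e^\lambda$ is a well-defined automorphism of the realized weak mixed complex and that the conjugation identity holds with correct signs, given the interplay of the weight grading, the cohomological grading, and the parities of $p_i$ and $\ddr x_i$. The fibre-contraction step is routine once phrased through the de Rham stack, and the final passage to $(\Gamma(\T^*[-1]Y,\pi^*K_Y)[-\dim Y],\Delta)$ is supplied verbatim by the preceding discussion identifying $\ddr$ with $\dive$ and $\dive$ with $\Delta$.
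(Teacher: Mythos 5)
Your proof is correct, but it handles the twist by a genuinely different mechanism than the paper. The paper never trivializes the twist: it first builds a special deformation retract of $(\Gamma(\T^*[-1]Y,\widehat{\Sym}(\bL_{\T^*[-1]Y}[-1])),\ddr)$ onto $(\Gamma(Y,\Sym(\Omega^1_Y[-1])),\ddr)$ using the contraction $\iota_e$ with the fiberwise Euler vector field $e$, and then feeds the perturbation $\omega\wedge(-)$ into the homological perturbation lemma; the correction terms vanish because $p\circ\omega\wedge(-)\circ\iota_e\circ\omega\wedge(-)\circ i=0$ (the Euler field vanishes along the zero section), so the induced differential downstairs remains $\ddr$. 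You instead exploit that $\omega=\ddr\lambda$ has a canonical global primitive which, precisely in the $(-1)$-shifted case, sits in cohomological degree $0$: conjugation by the even, weight-unipotent element $e^\lambda$ is a strict isomorphism from the twisted realization to the untwisted one, after which the only remaining content is the fiberwise contraction $\Omega(\T^*[-1]Y)\simeq\Omega(Y)$. So the two proofs share the contraction-onto-the-base ingredient (your odd Poincar\'e lemma is exactly what the paper's Euler-field retract establishes), and the real difference is perturbation lemma versus gauge transformation. Your route is shorter and more conceptual, and it resonates with the paper's own description of $(-1)$-shifted prequantization data as exponentials $f_0\exp(\sum h_i)$; its price is that it is special to the situation where the symplectic form admits an even primitive (for $n$-shifted cotangent bundles with $n$ even, $\lambda$ is odd and $e^\lambda$ collapses to $1+\lambda$, which no longer conjugates the differentials), whereas the perturbation argument produces explicit retraction data and is the template that adapts to the polynomial setting of \v{S}evera's theorem discussed immediately afterwards, where a different contraction $\iota_\pi$ is required. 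Two small points of care: for non-affine $Y$ the internal differential $\d$ of $\DR(\T^*[-1]Y)$ is not literally zero (derived global sections contribute), but your argument only needs $\d(e^\lambda)=0$, which holds because $\lambda$ is a canonical, strictly $\d$-closed global one-form; and your alternative justification via $(\T^*[-1]Y)_{\dR}\cong Y_{\dR}$ tacitly invokes the identification of the realization $|\DR(X)|$ with $\cO(X_{\dR})$ in characteristic zero, which is true but is an external theorem that should be cited rather than asserted.
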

\begin{proof}
We have already explained the last isomorphism, so we just need to construct the first quasi-isomorphism. The projection and the zero section $\T^*[-1] Y\rightleftarrows Y$ induce maps
\[
\xymatrix{
\Gamma(Y, \Sym(\Omega^1_Y[-1])) \ar@<.5ex>^-{i}[r] & \Gamma(\T^*[-1] Y, \widehat{\Sym}(\bL_{\T^*[-1] Y}[-1])) \ar@<.5ex>^-{p}[l]
}
\]
such that $p\circ i = \id$. The maps $p$ and $i$ intertwine the de Rham differentials $\ddr$ on both sides. Let $e$ be the Euler vector field along the fibers of $\T^*[-1] Y\rightarrow Y$ and let $\iota_e$ be the contraction. Then
\[\cL_e = \ddr \iota_e + \iota_e \ddr\]
defines a nonnegative grading on $\Gamma(\T^*[-1] Y, \widehat{\Sym}(\bL_{\T^*[-1] Y}[-1]))$ whose degree zero component is the algebra of differential forms $\Gamma(Y, \Sym(\Omega^1_Y[-1]))$. This constructs a homotopy $h$, such that
\[ip - \id = \ddr h + h\ddr.\]
In this way we obtain a special deformation retract
\[
\xymatrix{
(\Gamma(Y, \Sym(\Omega^1_Y[-1])), \ddr) \ar@<.5ex>^-{i}[r] & (\Gamma(\T^*[-1] Y, \widehat{\Sym}(\bL_{\T^*[-1] Y}[-1])), \ddr) \ar@<.5ex>^-{p}[l]
}
\]
Observe that the operator $\omega\wedge(-)\circ \iota_e$ on $\Gamma(\T^*[-1] Y, \widehat{\Sym}(\bL_{\T^*[-1] Y}[-1]))$ is nilpotent since it increases the form degree along $Y$. Note that $p\circ\omega\wedge(-)\circ\iota_e\circ\omega\wedge(-)\circ i = 0$ since the Euler vector field vanishes along the zero section. Therefore, by the homological perturbation lemma \cite{Crainic} there is a special deformation retract
\[
\xymatrix{
(\Gamma(Y, \Sym(\Omega^1_Y[-1])), \ddr) \ar@<.5ex>^-{i_1}[r] & (\Gamma(\T^*[-1] Y, \widehat{\Sym}(\bL_{\T^*[-1] Y}[-1])), \ddr +\wedge\wedge(-)) \ar@<.5ex>^-{p_1}[l]
}
\]
\end{proof}

There is another appearance of the BV Laplacian in a similar context explained in \cite{Severa}. The de Rham complex we consider
\[(\Omega(\T^*[-1] Y), \ddr) = (\Gamma(Y, \widehat{\Sym}(\bL_{\T^*[-1] Y}[-1])), \ddr)\]
involves completed differential forms. Let
\[\Omega^{\pol}(\T^*[-1] Y)\subset \Omega(\T^*[-1] Y)\]
be the subspace of polynomial differential forms. We may also consider the twisted version $\Omega^{\pol}_\omega(\T^*[-1] Y)$ and $\Omega^{\pol, \lambda}_\omega(\T^*[-1] Y)$ the de Rham complex with the differential $\lambda\ddr + \omega\wedge(-)$.

Consider the inclusion and projection maps
\[
\xymatrix{
\Gamma(\T^*[-1] Y, \pi^* K_Y)[-\dim Y]\ar@<.5ex>^-{i}[r] & \Gamma(\T^*[-1] Y, \Sym(\bL_{\T^*[-1] Y}[-1])) \ar@<.5ex>^-{p}[l]
}
\]
Let $\iota_\pi$ be the contraction with the Poisson bivector on $\T^*[-1] Y$. Then $\omega\wedge(-)\circ\iota_\pi + \iota_\pi\circ\omega\wedge(-)$ defines a nonpositive grading with $\Gamma(\T^*[-1] Y, \pi^* K_Y)[-\dim Y]$ the zeroth component. The operator $(\id - \ddr \iota_\pi)$ is invertible on $\Gamma(\T^*[-1] Y, \Sym(\bL_{\T^*[-1] Y}[-1]))$, so we may again apply the homological perturbation lemma. The induced differential on $\Gamma(\T^*[-1] Y, \pi^* K_Y)[-\dim Y]$ is exactly the BV Laplacian.

\begin{thm}[Severa]
Let $Y$ be a smooth scheme. For any $\lambda$ there is a quasi-isomorphism
\[\Omega^{\pol, \lambda}_\omega(\T^*[-1] Y)\cong (\Gamma(\T^*[-1] Y, \pi^* K_Y)[-\dim Y], \lambda\Delta).\]
\end{thm}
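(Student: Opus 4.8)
The plan is to run the homological perturbation lemma exactly as in the proof of \cref{prop:BVquantization}, but contracting directly onto the Batalin--Vilkovisky complex rather than onto $\Omega(Y)$, and carrying the parameter $\lambda$ along. Write $M = \Gamma(\T^*[-1] Y, \Sym(\bL_{\T^*[-1] Y}[-1]))$ for the polynomial forms and $M_0 = \Gamma(\T^*[-1] Y, \pi^* K_Y)[-\dim Y]\subset M$ for the subspace of forms of top degree along the base and of degree zero along the fibres. The two relevant operators are the de Rham differential $\ddr$ and wedging with the $(-1)$-shifted symplectic form $\omega$; since $\omega$ has odd cohomological degree one has $\omega\wedge\omega=0$, so $(M, \omega\wedge(-))$ is a genuine complex. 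The idea is to take $\omega\wedge(-)$ as the unperturbed differential and $\lambda\ddr$ as the perturbation, so that $\omega\wedge(-)$ plays the role of the creation operator (the analogue of $\iota_e$ in \cref{prop:BVquantization} being the Poisson contraction $\iota_\pi$ introduced in the discussion preceding the theorem).

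First I would settle the classical case $\lambda=0$. Because $\omega$ and the Poisson bivector are both odd and square to zero, the number operator $N = \omega\wedge(-)\circ\iota_\pi + \iota_\pi\circ\omega\wedge(-)$ commutes with both $\omega\wedge(-)$ and $\iota_\pi$; it vanishes exactly on $M_0$ and is invertible on the rest. Hence $h = N^{-1}\iota_\pi$ is a contracting homotopy realising a special deformation retract of $(M, \omega\wedge(-))$ onto $(M_0, 0)$, with inclusion $i$ and projection $p$ onto the zero eigenspace. This is the content of the statement that $M_0$ is the zeroth component of the grading defined by $N$.

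Next I would switch on the perturbation $\lambda\ddr$. The operator $\ddr\iota_\pi$ strictly lowers the total form degree, so it is nilpotent on polynomial forms and $(\id-\lambda\ddr\iota_\pi)$ is invertible; this is the single place where polynomial, rather than completed, forms are needed, and it is what makes the perturbation series terminate. To get an induced differential that is \emph{linear} in $\lambda$ I would use the fibrewise scaling automorphism $\phi_\lambda$ of $\T^*[-1] Y$, which conjugates $\lambda\ddr + \omega\wedge(-)$ into $\lambda(\ddr + \omega\wedge(-))$; equivalently, one rescales the homotopy to $\lambda^{-1}h$. Either device forces the induced differential on $M_0$ to be $\lambda$ times the differential obtained at $\lambda=1$.

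Finally I would identify that differential with $\Delta$. In local coordinates $\ddr i(\eta)$ contributes only $\ddr p_i$-terms, the $\ddr x_i$-terms dying against the top base-form, so the term of the series with a single $\ddr$ vanishes; the term with two copies of $\ddr$, namely $p\,\ddr\, h\, \ddr\, i$, contracts one $\ddr p_i$ against the missing $\ddr x_i$ and differentiates in $x_i$, reproducing $\sum_i\partial_{p_i}\partial_{x_i}$, which is exactly $\Delta$. All longer terms vanish: each further application of $\iota_\pi$ deletes one more base-cotangent direction, and a single $\ddr$ can refill at most one such gap, so the projection to the top-form piece $M_0$ annihilates them. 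I expect the main obstacle to be precisely this last bookkeeping — pinning down the linearity in $\lambda$ together with the signs and the eigenvalue of $N^{-1}$ so that the two-$\ddr$ term is \emph{exactly} $\Delta$ and every higher term cancels, rather than merely that some deformed second-order operator appears; the convergence of the perturbation on polynomial forms is by comparison a routine nilpotence check.
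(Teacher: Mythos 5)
Your proposal is correct and follows essentially the same route as the paper's argument: contraction of polynomial forms onto the zero eigenspace of $N = \omega\wedge(-)\circ\iota_\pi + \iota_\pi\circ\omega\wedge(-)$, the homological perturbation lemma justified by the (local) nilpotence of $\ddr\iota_\pi$ on polynomial (as opposed to completed) forms, and the term-by-term identification of the induced differential with $\Delta$ via the degree count that kills all terms beyond $p\,\ddr\,h\,\ddr\,i$. The one point where you go beyond the paper's sketch is the scaling argument pinning down linearity in $\lambda$ (the paper leaves this bookkeeping implicit), and your treatment of it is correct.
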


\begin{remark}
The operator $(\id - \ddr \iota_\pi)$ is \emph{not} invertible on the completed algebra of differential forms $\Gamma(\T^*[-1] Y, \widehat{\Sym}(\bL_{\T^*[-1] Y}[-1]))$. In particular, in this case we cannot apply the homological perturbation lemma to the same setup.
\end{remark}

For a QP-manifold $X$, the \defterm{quantum action functional} $\exp(S)$ is a semidensity annihilated by the BV Laplacian \cite{Schwarz}. We see that in the context of shifted symplectic geometry $\Omega_\omega(X)$ plays the role of the complex of semidensities.

\section{Geometric quantization}

In this section we finally define the geometric quantization of an $n$-shifted Lagrangian fibration.

\subsection{Definition}

\begin{defn}
Let $\pi\colon X\rightarrow B$ be a prequantum $1$-shifted Lagrangian fibration specified, in particular, by a gerbe $\cG$ on $B$. The \defterm{geometric quantization} of $\pi$ is the $\infty$-category $\QCoh^\cG(B)$.
\label{def:geometricquantization1stack}
\end{defn}

\begin{example}
Suppose $Y$ is a derived Artin stack locally of finite presentation and $\cG$ a gerbe on $Y$. By \cref{thm:prequantumtwistedcotangent} the projection $\T^*_{c_1(\cG)}[1]Y\rightarrow Y$ has a natural prequantization. Its geometric quantization is $\QCoh^\cG(Y)$.
\end{example}

\begin{defn}
Let
\[
\xymatrix@R=0cm{
& L \ar[dl] \ar[dd]\ar[dr] & \\
X \ar[dd] && Y \ar[dd] \\
& B_L \ar^{f}[dl] \ar_{g}[dr] & \\
B_X && B_Y
}
\]
be a prequantum $1$-shifted Lagrangian correspondence specified, in particular, by gerbes $\cG_X$ and $\cG_Y$ on $X$ and $Y$ together with an isomorphism $f^*\cG_X\cong g^*\cG_Y$. The \defterm{geometric quantization} of this $1$-shifted Lagrangian correspondence is the functor
\[\QCoh^{\cG_X}(B_X)\xrightarrow{f^*}\QCoh^{f^*\cG_X}(B_L)\cong\QCoh^{g^*\cG_Y}(B_L)\xrightarrow{g_*}\QCoh^{\cG_Y}(B_Y).\]
\label{def:geometricquantization1correspondence}
\end{defn}

\begin{example}
If $L\rightarrow X$ is a $1$-shifted Lagrangian, we may interpret it as a $1$-shifted Lagrangian correspondence in two ways: either as $\pt\leftarrow L\rightarrow X$ or as $X\leftarrow L\rightarrow \pt$. In the first interpretation the geometric quantization of a prequantum 1-shifted Lagrangian $L$ gives rise to a functor $\Ch\rightarrow \QCoh^\cG(X)$, i.e. to an object of $\QCoh^\cG(X)$. In the second interpretation it gives rise to a functor $\QCoh^\cG(X)\rightarrow \Ch$. If $L$ is a smooth and proper scheme, we may adjust the prequantum data by the canonical bundle of $L$, so that the second functor is left adjoint to the first functor.
\end{example}

\begin{example}
Let $f\colon Y\rightarrow X$ be a morphism of derived Artin stacks locally of finite presentation, $\cG$ a gerbe on $X$ together with a trivialization of $f^*\cG$. Then
\[
\xymatrix{
\N^*_{c_1(\cG)}[1] Y \ar[r] \ar[d] & \T^*_{c_1(\cG)}[1] X \ar[d] \\
Y \ar[r] & X
}
\]
has the structure of a prequantum $1$-shifted Lagrangian fibration. Its geometric quantization gives rise to an object $f_*\cQ_Y\in\QCoh^\cG(X)$ as well as a functor $\QCoh^\cG(X)\rightarrow \Ch$ given by $\cE\mapsto \Gamma(Y, f^*\cE)$.
\end{example}

\begin{example}
Let $X\rightarrow B$ be a prequantum $0$-shifted Lagrangian fibration specified, in particular, by a line bundle $\cL$ on $B$. Consider the prequantum $1$-shifted Lagrangian fibration
\[
\xymatrix{
X \ar[r] \ar[d] & X_{\dR} \ar[d] \\
B \ar[r] & B_{\dR}
}
\]
from \cref{ex:prequantumDR} specified, in particular, by the trivial gerbe on $B_{\dR}$ and the trivialization along $B\rightarrow B_{\dR}$ given by $\cL$. The geometric quantization of the Lagrangian $X\rightarrow X_{\dR}$ gives rise to an object $\cD_B\otimes_{\cO_B}\cL\in\cD(B)$ as well as to a functor $\cD(B)\rightarrow \QCoh(B)$ given by $\cE\mapsto \cE\otimes \cL$. We refer to \cite{ElliottYoo} for more details on this example.
\end{example}

We can also define the $0$-shifted version of geometric quantization.

\begin{defn}
Let $\pi\colon X\rightarrow B$ be a prequantum $0$-shifted Lagrangian fibration specified, in particular, by a line bundle $\cL$ on $B$. The \defterm{geometric quantization} of $\pi$ is the chain complex $\Gamma(B, \cL)$.
\end{defn}

\begin{example}
Let $X$ be a derived Artin stack locally of finite presentation and $\cL$ a line bundle on $X$. By \cref{thm:prequantumtwistedcotangent} the projection $\T^*_{c_1(\cL)} X\rightarrow X$ has a natural prequantization. Its geometric quantization is $\Gamma(X, \cL)$.
\end{example}

We may also describe geometric quantizations of a class of $0$-shifted Lagrangians.

\begin{defn}
Suppose $f\colon X\rightarrow Y$ is a morphism of derived stacks. $f$ has a \defterm{relative volume form of degree $n$} if for perfect complex $\cE$ on $Y$ there is a morphism
\[\int_f\colon \Gamma(X, f^*\cE)\longrightarrow \Gamma(Y, \cE)[-n].\]
\end{defn}

\begin{example}
Suppose $f\colon X\rightarrow Y$ is a smooth and proper schematic morphism of relative dimension $n$. Let $K_{X/Y}$ be the relative canonical bundle. Then for every perfect complex $\cE$ on $Y$ the Grothendieck duality provides an integration map
\[f_*(f^*\cE\otimes K_{X/Y})\longrightarrow \cE[-n].\]
Then a trivialization of the relative canonical bundle provides a relative volume form of degree $n$ in the above sense.
\end{example}

\begin{defn}
Consider a prequantum $0$-shifted Lagrangian correspondence
\[
\xymatrix@R=0cm{
& L \ar[dl] \ar[dr] \ar[dd] & \\
X \ar[dd] && Y \ar[dd] \\
& B_L \ar^{f}[dl] \ar_{g}[dr] & \\
B_X && B_Y
}
\]
specified, in particular, by line bundles $\cL_X$ on $B_X$ and $\cL_Y$ on $B_Y$ together with an isomorphism of their pullbacks $f^* \cL_X\cong g^* \cL_Y$. Moreover, assume $B_L\rightarrow B_Y$ has a relative volume form of degree $n$. Its \defterm{geometric quantization} is the morphism
\[\Gamma(B_X, \cL_X)\longrightarrow \Gamma(B_L, f^*\cL_X)\cong \Gamma(B_L, g^*\cL_Y)\longrightarrow \Gamma(B_Y, \cL_Y)[-n].\]
\end{defn}

Let us also mention a result about compositions of geometric quantizations.

\begin{prop}
Consider the diagram
\[
\xymatrix@R=0cm{
L_1 \ar[dr] \ar[dd] && L_2 \ar[dl] \ar[dd] \\
& X \ar[dd] & \\
B_{L_1} \ar_{f}[dr] && B_{L_2} \ar^{g}[dl] \\
& B_X &
}
\]
where both squares are prequantum $1$-shifted Lagrangian fibrations, where $B_{L_1}\rightarrow B_X$ is representable quasi-compact and quasi-separated. Let
\[F_{L_1}\colon \Ch\longrightarrow \QCoh^{\cG}(B_X),\qquad F_{L_2}\colon \QCoh^{\cG}(B_X)\longrightarrow \Ch\]
be the geometric quantizations of each square. Then the geometric quantization of the induced $0$-shifted Lagrangian fibration $L_1\times_X L_2\rightarrow B_{L_1}\times_{B_X} B_{L_2}$ is equivalent to the composite $F_{L_2}(F_{L_1}(k))$.
\label{prop:quantizationfunctor}
\end{prop}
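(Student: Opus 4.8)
The plan is to unwind $F_{L_1}$ and $F_{L_2}$ using \cref{def:geometricquantization1correspondence}, compose them, and then collapse the result to a single global-sections computation by base change. Interpreting $L_1\to X$ as the correspondence $\pt\leftarrow L_1\to X$, the definition gives $F_{L_1}(k)\cong f_*\cO_{B_{L_1}}$, where $\cO_{B_{L_1}}$ is regarded as an $f^*\cG$-twisted sheaf via the trivialization of $f^*\cG$ provided by the prequantum data on the left square, and $f_*\colon\QCoh^{f^*\cG}(B_{L_1})\to\QCoh^\cG(B_X)$ is the (twisted) pushforward. Dually, interpreting $L_2\to X$ as $X\leftarrow L_2\to\pt$, the definition gives $F_{L_2}(\cE)\cong\Gamma(B_{L_2},g^*\cE)$, where $g^*\cE\in\QCoh^{g^*\cG}(B_{L_2})$ is untwisted using the trivialization of $g^*\cG$ from the right square.

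Composing yields $F_{L_2}(F_{L_1}(k))\cong\Gamma(B_{L_2},g^*f_*\cO_{B_{L_1}})$. Set $B=B_{L_1}\times_{B_X}B_{L_2}$ with projections $p_1\colon B\to B_{L_1}$ and $p_2\colon B\to B_{L_2}$. I would apply base change to the resulting cartesian square to identify $g^*f_*\cong p_{2*}p_1^*$, so that $F_{L_2}(F_{L_1}(k))\cong\Gamma(B_{L_2},p_{2*}p_1^*\cO_{B_{L_1}})\cong\Gamma(B,p_1^*\cO_{B_{L_1}})$. It then remains to identify the twisted sheaf $p_1^*\cO_{B_{L_1}}$, once it is untwisted by the pullback of the second trivialization, with the line bundle $\cG_{L_1,L_2}$ on $B$ produced in the proof of \cref{thm:prequantumLagrangianfibrationintersection}. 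By construction there, $\cG_{L_1,L_2}$ is exactly the $0$-gerbe obtained from $\cG|_B$ together with its two trivializations pulled back from $B_{L_1}$ and $B_{L_2}$, i.e.\ the difference of these trivializations; since $p_1^*\cO_{B_{L_1}}$ is $\cO_B$ carrying the first trivialization, untwisting by the second produces precisely $\cG_{L_1,L_2}$. Thus $F_{L_2}(F_{L_1}(k))\cong\Gamma(B,\cG_{L_1,L_2})$, which is by definition the geometric quantization of the $0$-shifted Lagrangian fibration $L_1\times_X L_2\to B$.

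The main obstacle is justifying the base-change isomorphism $g^*f_*\cong p_{2*}p_1^*$ at the level of $\cG$-twisted quasi-coherent sheaves. This is precisely where the hypothesis that $B_{L_1}\to B_X$ is representable quasi-compact and quasi-separated enters: it places $f$, and hence its base change $p_2$, in the class of morphisms for which quasi-coherent pushforward commutes with arbitrary base change. Because $\cG$ is pulled back from $B_X$, the twisted category $\QCoh^\cG$ is a module over $\QCoh$ compatibly with $f^*$, $g^*$ and the pushforwards, so the twisted base-change isomorphism is deduced from the untwisted one rather than proved from scratch. The remaining bookkeeping—matching the two trivializations and the compatibility of the twisted structure sheaf with pullback—is routine.
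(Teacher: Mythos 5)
Your proof is correct and follows essentially the same route as the paper's: unwind the definitions to get $F_{L_1}(k)=f_*\cO_{B_{L_1}}$ and $F_{L_2}(\cE)=\Gamma(B_{L_2},g^*\cE)$, then apply the base-change formula along the Cartesian square $B_{L_1}\times_{B_X}B_{L_2}$ (using the representable quasi-compact quasi-separated hypothesis on $f$) to identify the composite with global sections over the fiber product. Your explicit bookkeeping of the twists --- identifying the untwisted pullback sheaf with the line bundle $\cG_{L_1,L_2}$ produced by \cref{thm:prequantumLagrangianfibrationintersection} --- is left implicit in the paper's proof, but it is the same argument.
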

\begin{proof}
By definition $F_{L_1}(k) = f_*\cO_{B_{L_1}}$ and $F_{L_2}(\cE) = \Gamma(B_{L_2}, g^*\cE)$. Therefore, the composite is
\[F_{L_2}(F_{L_1}(k)) = \Gamma(B_{L_2}, g^*f_*\cO_{B_{L_1}}).\]
Consider the Cartesian diagram
\[
\xymatrix{
B_{L_1}\times_{B_X} B_{L_2} \ar^{\tilde{g}}[r] \ar^{\tilde{f}}[d] & B_{L_1} \ar^{f}[d] \\
B_{L_2} \ar^{g}[r] & B_X
}
\]
By the base change formula (see \cite[Proposition 3.10]{BZFN} and \cite[Proposition 6.3.4.1]{SAG})
\[g^* f_*\cO_{B_{L_1}}\cong \tilde{f}_*\cO_{B_{L_1}\times_{B_X} B_{L_2}}.\]
\end{proof}

\begin{remark}
We expect that one can construct a symmetric monoidal $\infty$-category $\LagrCorr^{\prequant}_1$ which has the following informal description:
\begin{itemize}
\item Its objects are prequantum $1$-shifted Lagrangian fibrations $\pi_X\colon X\rightarrow B_X$.
\item Its morphisms from $(\pi_X\colon X\rightarrow B_X)$ to $(\pi_Y\colon Y\rightarrow B_Y)$ are prequantum $1$-shifted Lagrangian correspondences
\[
\xymatrix@R=0cm{
& L \ar[dd] \ar[dl] \ar[dr] & \\
X \ar[dd] && Y \ar[dd] \\
& B_L \ar[dl] \ar[dr] & \\
B_X && B_Y
}
\]
\end{itemize}
where $B_L\rightarrow B_Y$ is representable, quasi-compact and quasi-separated. We refer to \cite[Section 14]{HaugsengSpans} where such an $\infty$-category has been constructed without the prequantum data. We further expect that the geometric quantization promotes to a functor
\[\LagrCorr_1^{\prequant}\longrightarrow \PrL_k\]
given by \cref{def:geometricquantization1stack} on objects and by \cref{def:geometricquantization1correspondence} on morphisms. \Cref{prop:quantizationfunctor} is a shadow of the full functoriality statement.
\end{remark}

\begin{remark}
The recent paper \cite{StefanichPrL} defines iteratively the notion of a presentable $(\infty, n)$-category. Let $A$ be a connective commutative dg algebra. Then one may also iteratively define the $\infty$-category $\mathrm{Lin}n\mathrm{Cat}_A$ of linear $(\infty, n)$-categories by setting $\mathrm{Lin}0\mathrm{Cat}_A = \Mod_A$ and $\mathrm{Lin}n\mathrm{Cat}_A$ to be the subcategory
\[\mathrm{Lin}n\mathrm{Cat}_A \subset \Mod_{\mathrm{Lin}(n-1)\mathrm{Cat}_A}(n\PrL)\]
on $\kappa_0$-compact objects, where $\kappa_0$ is the smallest large cardinal. We expect that the \'{e}tale descent statement of $\mathrm{LinCat}_A = \mathrm{Lin}1\mathrm{Cat}_A$ from \cite[Section D.4]{SAG} generalizes for every $n$. Then one may define the $\infty$-category of quasi-coherent sheaves of $(\infty, n)$-categories on a derived prestack $X$ to be
\[\mathrm{Shv}n\mathrm{Cat}(X) = \lim_{\Spec A\rightarrow X} \mathrm{Lin}n\mathrm{Cat}_A.\]
We refer to the upcoming work of Stefanich for more details. In particular, this will allow one to define geometric quantization of a prequantum $n$-shifted Lagrangian fibration $X\rightarrow B$ for any $n\geq 0$.
\label{rem:LinCat}
\end{remark}

\section{Examples}
\label{sect:examples}

In this section we give several examples of prequantizations and geometric quantizations.

\subsection{Symplectic groupoids}
\label{sect:symplecticgroupoids}

Let $X$ be a smooth scheme. Recall the following definition from \cite{WeinsteinGroupoids}.

\begin{defn}
A \defterm{symplectic groupoid} is a groupoid $\cG\rightrightarrows X$ whose source and target morphisms are smooth morphisms together with a multiplicative symplectic form on $\cG$ such that the unit section $X\rightarrow \cG$ is Lagrangian.
\end{defn}

It is well-known (see e.g. \cite[Proposition 3.32]{SafronovPoissonLie}) that the data of a symplectic groupoid $\cG\rightrightarrows X$ may be encoded in a 1-shifted symplectic structure on the quotient stack $[X/\cG]$ together with a Lagrangian structure on the projection $X\rightarrow [X/\cG]$. Let us describe prequantizations and Lagrangian fibrations on this morphism.

Let $B$ be another smooth scheme together with a groupoid $\cG_B\rightrightarrows B$. Consider a smooth morphism of groupoids
\[\pi\colon (\cG\rightrightarrows X)\rightarrow (\cG_B\rightrightarrows B).\]
It gives rise to a commutative diagram
\[
\xymatrix{
X \ar[r] \ar[d] & [X/\cG] \ar[d] \\
B \ar[r] & [B/\cG_B]
}
\]
Choose a prequantum $1$-shifted Lagrangian fibration structure on this diagram. In particular, there is a gerbe $\cE$ on $[B/\cG_B]$ together with a trivialization of its pullback to $B$. Geometric quantization of this diagram gives rise to functors
\[Q_X\colon \Ch\longrightarrow \QCoh^{\cE}([B/\cG_B]),\qquad F\colon \QCoh^{\cE}([B/\cG_B])\longrightarrow \Ch.\]

\begin{remark}
The symplectic groupoid $\cG\rightrightarrows X$ induces a unique Poisson structure on $X$ such that the source map $\cG\rightarrow X$ is Poisson. The two algebras $\End(Q_X)$ and $\End(F)$ are both approximations to a deformation quantization of $X$; see \cite{WeinsteinXu} for more details on this idea.
\end{remark}

Given the diagram
\[
\xymatrix@R=0cm{
X \ar[dr] \ar[dd] && X \ar[dl] \ar[dd] \\
& [X/\cG] \ar[dd] & \\
B \ar[dr] && B \ar[dl] \\
& [B/\cG_B]
}
\]
each square is a $1$-shifted Lagrangian fibration. Therefore, passing to pullbacks, by \cref{thm:prequantumLagrangianfibrationintersection} the projection $\cG\rightarrow \cG_B$ is a $0$-shifted Lagrangian fibration. By construction it is easily seen to be multiplicative. In particular, the induced line bundle $\cL$ on $\cG_B$ is a multiplicative line bundle, i.e. it gives a central extension of the groupoid $\cG_B$. Such multiplicative polarizations and prequantizations of symplectic groupoids were extensively studied in \cite{Hawkins}.

The relationship between the geometric quantization of the symplectic groupoid $\cG$ and that of the 1-shifted Lagrangian morphism $X\rightarrow [X/\cG]$ is given by the following statement.

\begin{prop}
Suppose the source and target maps $\cG_B\rightarrow B\times B$ define a quasi-compact and quasi-separated morphism. Then the geometric quantization of $\cG$ is equivalent to $F(Q_X)$.
\end{prop}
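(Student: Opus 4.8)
The plan is to deduce this directly from \cref{prop:quantizationfunctor}. First I would use the standard identification of a groupoid with the self-intersection of its atlas, $X\times_{[X/\cG]} X\cong \cG$ and $B\times_{[B/\cG_B]} B\cong \cG_B$. Applying \cref{thm:prequantumLagrangianfibrationintersection} to the pair of identical prequantum $1$-shifted Lagrangian fibrations $X\rightarrow [X/\cG]$ then yields exactly the prequantum $0$-shifted Lagrangian fibration $\cG\rightarrow \cG_B$ together with its multiplicative line bundle $\cL$, and by definition its geometric quantization is the chain complex $\Gamma(\cG_B, \cL)$. This is precisely what we call the geometric quantization of $\cG$, so the content of the statement becomes an equivalence $\Gamma(\cG_B, \cL)\cong F(Q_X(k))$.

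Next I would match this against \cref{prop:quantizationfunctor}, taking the ambient $1$-shifted symplectic stack there to be $[X/\cG]$, its base to be $[B/\cG_B]$, both Lagrangians to be $X\rightarrow [X/\cG]$, and both base maps $f, g\colon B\rightarrow [B/\cG_B]$ to be the quotient atlas $q$. Unwinding \cref{def:geometricquantization1correspondence} then recovers precisely the functors under consideration, $Q_X(k) = q_*\cO_B$ and $F(\cM) = \Gamma(B, q^*\cM)$, with the gerbe twists trivialized along $q$. The only hypothesis of \cref{prop:quantizationfunctor} left to verify is that $q\colon B\rightarrow [B/\cG_B]$ is representable, quasi-compact and quasi-separated: representability is the statement that $B\times_{[B/\cG_B]} B\cong \cG_B$ is a scheme, while quasi-compactness and quasi-separatedness follow from the assumption that the source--target map $\cG_B\rightarrow B\times B$ is quasi-compact and quasi-separated, since this map controls the fibers of $q$. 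With the hypothesis in place, \cref{prop:quantizationfunctor} supplies the desired equivalence.

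The point requiring the most care, and the one I expect to be the main obstacle, is the bookkeeping of the prequantum line bundle. In the base-change computation underlying \cref{prop:quantizationfunctor} the object $q^*q_*\cO_B$ acquires a natural twist, reflecting that the two legs of $\cG_B = B\times_{[B/\cG_B]} B$ trivialize the gerbe $\cE$ in ways that differ by a line bundle on $\cG_B$. The work is to check that this line bundle agrees with the multiplicative $\cL$ produced by \cref{thm:prequantumLagrangianfibrationintersection}, so that the base-change identification indeed gives $F(Q_X(k))\cong \Gamma(\cG_B, \cL)$ as asserted.
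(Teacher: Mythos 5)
Your proposal is correct and follows essentially the same route as the paper: the paper likewise reduces the statement to \cref{prop:quantizationfunctor} applied to the doubled square $X\to[X/\cG]$ over $q\colon B\to [B/\cG_B]$, observing that the hypothesis on $\cG_B\to B\times B$ makes $q$ schematic, quasi-compact and quasi-separated. The line-bundle bookkeeping you flag as the main obstacle is already packaged inside \cref{prop:quantizationfunctor} itself, whose statement identifies the composite $F(Q_X(k))$ with the geometric quantization of the induced $0$-shifted Lagrangian fibration, i.e. with $\Gamma(\cG_B, \cL)$, so no separate verification is needed once that proposition is invoked.
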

\begin{proof}
Consider the correspondence
\[
\xymatrix@R=0cm{
& B \ar_{p}[dl] \ar^{\pi}[dr] & \\
\pt && [B/\cG_B]
}
\]
By assumption the morphism $\pi\colon B\rightarrow [B/\cG_B]$ is schematic, quasi-compact and quasi-separated. Therefore, the claim follows from \cref{prop:quantizationfunctor}.
\end{proof}

\begin{remark}
The 1-shifted symplectic stack $[X/\cG]$ is the phase space of the Poisson $\sigma$-model into $X$ equipped with Poisson structure induced from the symplectic groupoid \cite{SchallerStrobl,CattaneoFelder}. The Lagrangian $X\rightarrow [X/\cG]$ defines a classical boundary condition (``classical mechanics'').
\end{remark}

\subsection{Hamiltonian spaces}
\label{sect:Hamiltonian}

Let $G$ be a reductive algebraic group. Then
\[[\g^*/G]\cong \T^*[1](\B G)\]
carries a natural 1-shifted symplectic structure (see \cite[Section 1.2.3]{CalaqueTFT} for details). Therefore, by \cref{prop:cotangentprequantization} the projection $[\g^*/G]\rightarrow \B G$ has the structure of a prequantum $1$-shifted Lagrangian fibration specified by the trivial gerbe on $\B G$.

\begin{lm}
The geometric quantization of the prequantum $1$-shifted Lagrangian fibration $[\g^*/G] \rightarrow \B G$ is the $\infty$-category $\Rep(G)$ of complexes of $G$-representations.
\end{lm}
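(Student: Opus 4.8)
The plan is to unwind the definition of geometric quantization and then recognize the resulting $\infty$-category through descent. The prequantum $1$-shifted Lagrangian fibration $[\g^*/G]\to \B G$ furnished by \cref{prop:cotangentprequantization} is specified by the \emph{trivial} gerbe $\cG$ on $\B G$, so by \cref{def:geometricquantization1stack} its geometric quantization is $\QCoh^\cG(\B G)$. The first step is to observe that the twisting plays no role here: the trivial gerbe is the unit $\cQ_{\B G}$ of the symmetric monoidal structure on $\ShvCat(\B G)$, and global sections of the unit sheaf of categories recover quasi-coherent sheaves, so
\[
\QCoh^\cG(\B G)=\Hom_{\ShvCat(\B G)}(\cQ_{\B G},\cQ_{\B G})\cong \QCoh(\B G).
\]
This reduces the claim to identifying $\QCoh(\B G)$ with $\Rep(G)$.

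The second step is to compute $\QCoh(\B G)$ by descent along the atlas $\pt\to \B G$. By definition $\QCoh(\B G)=\lim_{\Spec A\to \B G}\Mod_A$, and since $\QCoh$ satisfies smooth (indeed fppf) descent, this limit may be computed over the \v{C}ech nerve of $\pt\to\B G$. As $G$ is smooth and affine, this \v{C}ech nerve is the simplicial affine scheme $G^{\bullet}$ given by the bar construction, and the limit becomes the totalization
\[
\lim_{[m]\in\Delta}\Mod_{\cO(G^{m})}.
\]
By the cobar description of this totalization, it is precisely the $\infty$-category of $\cO(G)$-comodules in $\Ch$, that is, complexes of algebraic $G$-representations, which is $\Rep(G)$.

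I do not expect any serious obstacle, as both steps are standard. The first is formal once one knows that $\Hom_{\ShvCat(\B G)}(\cQ_{\B G},-)$ computes global sections and hence sends $\cQ_{\B G}$ to $\QCoh(\B G)$; the second is the usual identification of quasi-coherent sheaves on a classifying stack with representations. The only point requiring a modicum of care is the cofinality of the \v{C}ech nerve of a single smooth atlas inside the indexing diagram $\{\Spec A\to\B G\}$, which is guaranteed by descent for $\QCoh$ (see \cite{SAG}).
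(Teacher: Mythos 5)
Your proposal is correct and follows the same route as the paper, whose entire proof reads: since the gerbe on $\B G$ is trivial, the geometric quantization is $\QCoh(\B G)\cong\Rep(G)$. You have merely filled in the standard justifications (the trivial gerbe is the unit $\cQ_{\B G}$ whose global sections give $\QCoh(\B G)$, and descent along $\pt\to\B G$ identifies $\QCoh(\B G)$ with $\cO(G)$-comodules, i.e.\ $\Rep(G)$), both of which the paper takes for granted.
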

\begin{proof}
Since the gerbe on $\B G$ is trivial, the geometric quantization is $\QCoh(\B G)\cong \Rep(G)$.
\end{proof}

Now suppose $(X, \omega)$ is a symplectic scheme equipped with Hamiltonian $G$-action with a moment map $\mu\colon X\rightarrow \g^*$. If we denote by $a\colon \g\rightarrow \Gamma(X, \T_X)$ the infinitesimal action map, the moment map equation is
\[\iota_{a(x)}\omega = \ddr \mu(x).\]
It is shown in \cite[Section 2.2.1]{CalaqueTFT} that this gives rise to a $1$-shifted Lagrangian morphism
\[[X/G]\longrightarrow [\g^*/G].\]

\begin{thm}
Let $X$ be a Hamiltonian $G$-scheme. Choose:
\begin{itemize}
\item A smooth $G$-equivariant morphism $\pi\colon X\rightarrow B$ to a smooth scheme $B$ with half-dimensional fibers.
\item A $G$-equivariant line bundle $\cL$ on $B$.
\item A $G$-equivariant connection $\nabla$ on $\pi^*\cL$ extending the natural fiberwise connection, such that:
\begin{enumerate}
\item For any $x\in\g$ the operator $\nabla_{a(x)} + \mu(x)$ coincides with the infinitesimal $\g$-action on $\cL$.
\item The curvature of $\nabla$ is the symplectic form $\omega$.
\end{enumerate}
\end{itemize}
Then the diagram
\begin{equation}
\xymatrix{
[X/G] \ar^{\pi}[d] \ar[r] & [\g^*/G] \ar[d] \\
[B/G] \ar[r] & \B G
}
\label{eq:prequantumHamiltonianspace}
\end{equation}
is a prequantum $1$-shifted Lagrangian fibration. If $B$ is quasi-compact and quasi-separated, the geometric geometric quantization gives the functors $\Ch\rightarrow \Rep(G)$ given by including the $G$-representation $\Gamma(B, \cL)$ and $\Rep(G)\rightarrow\Ch$ given by $V\mapsto (V\otimes \Gamma(B, \cL))^G$.
\label{thm:prequantumHamiltonianspace}
\end{thm}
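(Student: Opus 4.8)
Given a Hamiltonian $G$-scheme $X$ with moment map $\mu\colon X\to\g^*$, a smooth $G$-equivariant map $\pi\colon X\to B$ with half-dimensional fibers, a $G$-equivariant line bundle $\cL$ on $B$, and a $G$-equivariant connection $\nabla$ on $\pi^*\cL$ satisfying the two numbered conditions, I must show that the square
$$\xymatrix{[X/G]\ar[r]\ar[d]&[\g^*/G]\ar[d]\\ [B/G]\ar[r]&\B G}$$
is a prequantum $1$-shifted Lagrangian fibration, and identify its geometric quantization as the two stated functors.

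Let me sketch my approach.

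Let me think about what the data of a prequantum $1$-shifted Lagrangian fibration on this square amounts to, via the definition. The right-hand fibration $[\g^*/G]\to\B G$ is already prequantized by Proposition prop:cotangentprequantization (since $[\g^*/G]\cong\T^*[1]\B G$), with trivial gerbe $\cG_X$ on $\B G$ and Liouville connective structure. So my data (1) is fixed. For the bottom map $[B/G]\to\B G$ I need a trivialization of the pullback gerbe — but that gerbe is trivial, so a trivialization is the zero $0$-gerbe, i.e. no extra data beyond the choice of base. The real content is items (3)–(4): a trivialization of the $1$-gerbe $g^*\cG_X$ on $[B/G]$ (again automatic, the gerbe being trivial) together with a trivialization of the relative one-form $f^*\nabla\in\fib(\cA^1(\,[X/G],1)\to\cA^1(\,[X/G]/[B/G],1))$, and finally the two nondegeneracy conditions. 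The Liouville form on $\T^*[1]\B G$ pulls back under the moment map to the one-form whose underlying object is $\mu$ viewed $G$-equivariantly; so $f^*\nabla$ on $[X/G]$ is exactly the datum $\nabla+\mu$. The plan is therefore to translate the abstract trivializations into honest differential-geometric data on $X$ and recognize that the hypotheses (1) and (2) of the theorem are precisely the equations those trivializations must satisfy.

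\medskip

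The core of the argument is to match up three pieces of data on $[X/G]$. First, the connective structure $\nabla$ on $\pi^*\cL$ gives, by the curvature condition (2), a two-form whose curvature is $\omega$; descending along $[X/G]\to[B/G]$ this produces the presymplectic datum and the relative flatness required by item (2) of the prequantum fibration definition. First I would check that $\nabla$ extends the natural \emph{fiberwise} flat connection along $X\to B$: because $\pi$ has half-dimensional fibers and $\cL$ is pulled back from $B$, the restriction of $\nabla$ to the fibers is flat, which is exactly the relative flat connection of Example ex:pullbackgerbe. Second, I would interpret condition (1), the identity $\nabla_{a(x)}+\mu(x)=$ (infinitesimal $\g$-action on $\cL$), as the statement that the trivialization descends $G$-equivariantly: this is the compatibility between the moment map $\mu$ (which is the underlying one-form of the Liouville pullback $f^*\nabla$) and the equivariant structure on $\cL$, i.e. it says precisely that the one-form $f^*\nabla$ on $[X/G]$ is trivialized relative to $[B/G]$. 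The cleanest route is to observe that conditions (1) and (2) together assert that $\nabla$ is a $G$-equivariant connection whose \emph{equivariant curvature} equals the moment map $\mu$ together with the symplectic form $\omega$ — this is the Cartan-model avatar of a prequantization of a Hamiltonian space — and then cite that this equivariant-curvature datum is exactly what trivializes $f^*\nabla$ relative to the base $\B G$.

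\medskip

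With the prequantum data in place, the two nondegeneracy conditions are inherited from the underlying (non-prequantum) Lagrangian fibration statement. For these I would invoke Proposition prop:Lagrangianfibrationequivalent and the fact, established in \cite{CalaqueTFT}, that $[X/G]\to[\g^*/G]$ is $1$-shifted Lagrangian; the nondegeneracy of the isotropic fibration $[X/G]\to[B/G]$ then reduces to the assertion that $X\to B$ has half-dimensional fibers giving a genuine Lagrangian foliation, and the nondegeneracy of $[\g^*/G]\to\B G$ is Proposition prop:prequantumcotangent. So the nondegeneracy is \emph{not} the hard part — it is essentially bookkeeping on top of already-cited results. Finally, for the geometric quantization: by Definition def:geometricquantization1correspondence, viewing $[X/G]\to[\g^*/G]$ as the correspondence $\pt\leftarrow[X/G]\to[\g^*/G]$ with base correspondence $\pt\leftarrow[B/G]\xrightarrow{g}\B G$, the quantization is the composite $g_*\circ(\text{pullback})$ applied to $\QCoh^{\cG}$. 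Since the gerbe is trivial, $\QCoh^{\cG}(\B G)\cong\Rep(G)$ and $\QCoh(\,[B/G])\cong\Rep(G,\,\QCoh(B))$; the object produced is $g_*(\pi^*(-)\otimes\cL)$, and unwinding $g_*$ as $G$-invariants of pushforward to $B$ gives exactly $V\mapsto(V\otimes\Gamma(B,\cL))^G$ in one direction and the inclusion of the representation $\Gamma(B,\cL)$ in the other. The quasi-compact quasi-separated hypothesis on $B$ is what lets me apply base change (as in Proposition prop:quantizationfunctor) to identify these pushforwards.

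\medskip

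The step I expect to be the genuine obstacle is the identification in the second paragraph: showing that conditions (1) and (2), stated as ordinary (non-equivariant-looking) differential-geometric equations on $X$, assemble into a \emph{single} trivialization of $f^*\nabla$ relative to $\B G$ at the level of graded mixed complexes $\cA^1$. The subtlety is that $f^*\nabla$ lives in a fiber of $\cA^1([X/G],1)\to\cA^1([X/G]/[B/G],1)$, and pinning down that the equivariant moment-map data $(\omega,\mu)$ is exactly the right homotopy — rather than merely agreeing on underlying one-forms — requires carefully tracking the descent along $\pt\to\B G$ and the simplicial-bar-complex description of forms on $[X/G]$. I would handle this by reducing to the $G$-equivariant de Rham (Cartan) model, where condition (1) is visibly the vanishing of the equivariant differential of the potential and condition (2) fixes its curvature, and then matching this against the Liouville datum pulled back through $\mu$; the homotopy-coherence is then forced because both sides are determined by their underlying forms once the fiberwise flatness is known.
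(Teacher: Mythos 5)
Your overall route --- computing $\DR([X/G])$ via the Cartan model, reading condition (1) as the equivariance/moment-map compatibility and condition (2) as the curvature equation, reducing the nondegeneracy conditions to symplecticity of $\omega$ plus half-dimensionality of the fibers of $X\to B$, and computing the quantization via the pushforwards along $[B/G]\to\B G$ and $[B/G]\to\pt$ under the qcqs hypothesis --- coincides with the paper's. But there is a genuine gap at the very first step. You assert that the trivialization of the trivial $1$-gerbe $g^*\cG_X$ on $[B/G]$ is ``automatic, the gerbe being trivial,'' taking it to be the zero $0$-gerbe. This is where the proof breaks: trivializations of the trivial $1$-gerbe are not canonical --- they form the space of line bundles on $[B/G]$ (this is exactly the example in the paper following the definition of a prequantum fibration on a square) --- and the first move of the paper's proof is to choose this trivialization to be the descended bundle $\cL_{[B/G]}$ of the $G$-equivariant bundle $\cL$. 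That choice is not cosmetic: it is what makes the one-form of item (4) of the definition equal to $\pi^*c_1(\cL_{[B/G]})+\mu$ rather than $\mu$ alone. With your zero trivialization, item (4) would demand a $G$-invariant \emph{global} one-form $A$ on $X$, vanishing along the fibers of $\pi$, satisfying $\iota_{a(x)}A+\mu(x)=0$, i.e.\ a connection on the \emph{trivial} bundle; a connection on a nontrivial $\pi^*\cL$ is simply not an element of that space, so your framework can only accommodate the case $\cL\cong\cO_B$. This also explains why your identification ``$f^*\nabla$ is exactly the datum $\nabla+\mu$'' is off: $f^*\nabla$ is the one-form $\pi^*c_1(\cL_{[B/G]})+\mu$, and the connection $\nabla$ is its \emph{trivializing homotopy}, realized in the combined \v{C}ech--Cartan model by one-forms $A_i$ satisfying $A_i-A_j=\dlog g_{ij}$ (the statement that the $A_i$ assemble into a connection on $\pi^*\cL$) and $\iota_{a(x)}A_i+\mu(x)=\beta_i(x)$ (condition (1), with $\beta_i$ the equivariance data of $\cL$). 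Once the gerbe trivialization is placed correctly, these two equations are precisely the theorem's hypotheses and the rest of your argument goes through as in the paper.

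A secondary slip: you claim fiberwise flatness of $\nabla$ follows ``because $\pi$ has half-dimensional fibers and $\cL$ is pulled back from $B$.'' It does not; an arbitrary connection on $\pi^*\cL$ need not restrict to the canonical flat connection on the fibers, and half-dimensionality is irrelevant to this point. That $\nabla$ extends the natural fiberwise connection of \cref{ex:pullbackgerbe} is a hypothesis of the theorem, while half-dimensionality enters only at the end, as the nondegeneracy condition making $X\to B$ a $0$-shifted Lagrangian fibration between smooth schemes.
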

\begin{proof}
A $G$-equivariant line bundle $\cL$ on $B$ descends to a line bundle $\cL_{[B/G]}$ to $[B/G]$. We will use it as a trivialization of the pullback to $[B/G]$ of the trivial gerbe on $\B G$.

Since $G$ is reductive, we may compute $\DR([X/G])$ using the Cartan model of equivariant cohomology which gives
\[\DR([X/G]) = (\DR(X)\otimes \Sym(\g^*[-2]))^G.\]
The internal differential on $\DR([X/G])$ is the sum $\d + \iota_{a(-)}$, where the first term is the internal differential on $\DR(X)$ and the second term is the equivariant differential. The de Rham differential on $\DR([X/G])$ is simply the de Rham differential $\ddr$ on $\DR(X)$. The first Chern class $c_1(\cL_{[B/G]})\in\cA^1([B/G], 1)$ is the equivariant Chern class (see \cite{BerlineVergne,Getzler}) represented by the pair $(c_1(\cL), \beta)$, where $c_1(\cL)$ is the usual first Chern class, a degree 1 one-form on $B$, and $\beta\in\cO(X)\otimes \g^*$ which together satisfy
\[\iota_{a(x)} c_1(\cL) = \d\beta(x).\]
To be more explicit, we may use the \v{C}ech cohomology to compute $\DR(X)$. Choose a Zariski cover $\{U_i\}$ of $B$ over which $\cL$ trivializes and let $\{g_{ij}\}$ be the transition functions. Then $c_1(\cL)$ is represented by the one-forms $\{\dlog(g_{ij})\}$ on the intersections. The $G$-equivariance data for $\cL$ gives rise to functions $\beta_i\colon U_i\rightarrow \g^*$ such that
\[g_{ij}^{-1} a(x).g_{ij} = \beta_i(x) - \beta_j(x).\]
The pullback of the Liouville one-form on $[\g^*/G]$ to $[X/G]$ under the moment map is represented by $\mu\in\cO(X)\otimes \g^*$.

The prequantization data involves a trivialization of $\pi^*c_1(\cL_{[B/G]}) + \mu$. Pulling back the cover $U_i$ of $B$ to a cover $V_i$ of $X$, such a trivialization is given by one-forms $A_i$ on $U_i$ which satisfy the following equations:
\begin{align*}
A_i - A_j &= g_{ij}^{-1}\ddr g_{ij} \\
\iota_{a(x)}A_i + \mu(x) &= \beta_i(x).
\end{align*}
The first equation precisely says that $\{A_i\}$ define a connection $\nabla$ on $\pi^* \cL$ and the second equation is the condition that $\nabla_{a(x)} + \mu(x)$ gives the action of $\g$ on the line bundle.

The nondegeneracy condition on the 1-shifted isotropic structure on $[X/G]\rightarrow [\g^*/G]$ boils down to the fact that $\omega$ is symplectic as explained in \cite{CalaqueCotangent}. The nondegeneracy condition on the 1-shifted isotropic fibration on \eqref{eq:prequantumHamiltonianspace} is that $X\rightarrow B$ is a $0$-shifted Lagrangian fibration. But since these are smooth schemes, this condition is equivalent to the condition that the fibers are half-dimensional.

Finally, the statement about the geometric quantization of this 1-shifted Lagrangian fibration is clear as the pushforward of $\cL$ along $[B/G]\rightarrow \B G$ is $\Gamma(B, \cL)$ viewed as a $G$-representation and the pushforward of $\cL\otimes V$ (where $V\in\Rep(G)$) along $[B/G]\rightarrow \pt$ is $(\Gamma(B, \cL)\otimes V)^G$.
\end{proof}

\begin{remark}
The above data on $X$ is exactly the $G$-equivariance data for the prequantization and polarization on $X$ as in \cite{GuilleminSternberg}.
\end{remark}

\begin{remark}
The above statement has the following physical interpretation. The 1-shifted symplectic stack $[\g^*/G]$ is the phase space of the 2d BF theory on the point. The Lagrangian $[X/G]\rightarrow [\g^*/G]$ represents a classical boundary theory specified by a Hamiltonian $G$-space. After quantization $\Rep(G)$ is the category of boundary conditions of the 2d BF theory and the quantization of the above boundary theory gives a boundary condition specified by the object $\Gamma(B, \cL)\in\Rep(G)$.
\end{remark}

\begin{example}
Consider the Hamiltonian $G$-space $\T^* G$ corresponding to the left $G$-action on itself. Then the projection $\T^* G\rightarrow G$ is a $G$-equivariant Lagrangian fibration. By \cref{prop:prequantumcotangent} it admits a prequantization given the the trivial line bundle on $G$. By \cref{thm:prequantumHamiltonianspace} we can construct a $1$-shifted Lagrangian fibration
\[
\xymatrix{
\g^* \ar[d] \ar[r] & [\g^*/G] \ar[d] \\
\pt \ar[r] & \B G
}
\]
Its geometric quantization gives the functors
\[\Ch\longrightarrow \Rep(G),\qquad \Rep(G)\longrightarrow \Ch,\]
where the first functor sends $k\in\Ch$ to the regular representation $\cO(G)$ and the second functor is the forgetful functor.
\end{example}

\begin{remark}
There is a natural symplectic groupoid structure on $\T^* G\rightrightarrows \g^*$. In particular, the previous example fits into the general theory of geometric quantizations of symplectic groupoids, see \cite[Section 6.3]{Hawkins}.
\end{remark}

\begin{example}
Consider the Hamiltonian $G$-space $\pt$. The corresponding $1$-shifted Lagrangian fibration is
\[
\xymatrix{
\B G \ar@{=}[d] \ar[r] & [\g^*/G] \ar[d] \\
\B G \ar@{=}[r] & \B G
}
\]
Its geometric quantization gives the functors
\[\Ch\longrightarrow \Rep(G),\qquad \Rep(G)\longrightarrow \Ch,\]
where the first functor sends $k\in\Ch$ to the trivial one-dimensional representation and the second functor takes $G$-invariants.
\label{ex:prequantuminvariants}
\end{example}

For a Hamiltonian $G$-scheme $X$ we have two 1-shifted Lagrangians in $[\g^*/G]$: $[X/G]$ and $\B G$. In particular, we may consider their intersection. The following was introduced in \cite{CalaqueTFT,SafronovCS}.

\begin{defn}
Let $X$ be a Hamiltonian $G$-scheme. The \defterm{derived Hamiltonian reduction} is the $0$-shifted symplectic derived stack
\[X\ham G = X/G\times_{[\g^*/G]} \B G.\]
\end{defn}

The following statement is a derived analog of the ``quantization commutes with reduction'' principle \cite{GuilleminSternberg}.

\begin{prop}
Suppose $X$ is a Hamiltonian $G$-scheme equipped with a $G$-equivariant prequantization as in \cref{thm:prequantumHamiltonianspace}. Moreover, assume $B$ is a quasi-compact and quasi-separated. Then there is a natural prequantization of the $0$-shifted Lagrangian fibration $X\ham G\rightarrow B/G$ whose geometric quantization is $\Gamma(B, \cL)^G$.
\label{prop:QR}
\end{prop}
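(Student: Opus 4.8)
The plan is to exhibit $X\ham G\to B/G$ as the derived intersection of two prequantum $1$-shifted Lagrangians over the prequantum Lagrangian fibration $[\g^*/G]\to\B G$, and then to invoke \cref{thm:prequantumLagrangianfibrationintersection} and \cref{prop:quantizationfunctor}. The two Lagrangians are $[X/G]\to[\g^*/G]$, equipped with the prequantum Lagrangian fibration $[X/G]\to[B/G]$ furnished by \cref{thm:prequantumHamiltonianspace}, and $\B G\to[\g^*/G]$, equipped with the prequantum Lagrangian fibration of \cref{ex:prequantuminvariants}. Concretely, I would assemble the diagram
\[
\xymatrix@R=0cm{
[X/G] \ar[dr] \ar[dd] && \B G \ar[dl] \ar@{=}[dd] \\
& [\g^*/G] \ar[dd] & \\
[B/G] \ar[dr] && \B G \ar@{=}[dl] \\
& \B G &
}
\]
in which the gerbe on the base $\B G$ is trivial, its trivialization along $[B/G]\to\B G$ is the descent $\cL_{[B/G]}$ of the $G$-equivariant line bundle $\cL$, and its trivialization along the identity $\B G=\B G$ is the canonical one.

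Applying \cref{thm:prequantumLagrangianfibrationintersection} to this configuration then yields a prequantum $0$-shifted Lagrangian fibration on
\[
[X/G]\times_{[\g^*/G]}\B G\longrightarrow [B/G]\times_{\B G}\B G.
\]
The source is by definition $X\ham G$, and the target simplifies to $[B/G]=B/G$, so this already produces the asserted prequantized $0$-shifted Lagrangian fibration $X\ham G\to B/G$. Since the gerbe is trivial throughout, the resulting prequantum datum on $B/G$ is just a line bundle, and tracing through the construction identifies it with $\cL_{[B/G]}$.

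For the geometric quantization I would appeal to \cref{prop:quantizationfunctor}. Its hypothesis that $[B/G]\to\B G$ be representable, quasi-compact and quasi-separated is exactly where the assumption on $B$ is used, since base change along $\pt\to\B G$ identifies this morphism with $B$. The geometric quantizations of the two squares were already computed in the cited results: $F_{L_1}\colon\Ch\to\Rep(G)$ sends $k$ to the $G$-representation $\Gamma(B,\cL)$ (\cref{thm:prequantumHamiltonianspace}), while $F_{L_2}\colon\Rep(G)\to\Ch$ takes $G$-invariants (\cref{ex:prequantuminvariants}). Hence the geometric quantization of $X\ham G\to B/G$ is $F_{L_2}(F_{L_1}(k))=\Gamma(B,\cL)^G$, as claimed.

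Most of the labour is thus carried by the cited theorems, and the remaining work is bookkeeping. I expect the only genuine obstacle to be verifying that the prequantum data match up under the identification $[B/G]\times_{\B G}\B G\cong B/G$ — that is, that the trivial gerbe, its two trivializations, and the connective structures restrict to precisely the line bundle $\cL_{[B/G]}$. This is a compatibility check that also reconciles the direct computation $\Gamma(B/G,\cL_{[B/G]})=\Gamma(B,\cL)^G$ with the composite produced by \cref{prop:quantizationfunctor}, rather than a new argument.
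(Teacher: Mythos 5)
Your proposal is correct and follows essentially the same route as the paper's own proof: the same intersection diagram of the two prequantum $1$-shifted Lagrangian fibrations $[X/G]\rightarrow[B/G]$ (from \cref{thm:prequantumHamiltonianspace}) and $\B G = \B G$ (from \cref{ex:prequantuminvariants}) over $[\g^*/G]\rightarrow \B G$, followed by \cref{thm:prequantumLagrangianfibrationintersection} for the prequantization and \cref{prop:quantizationfunctor} for the identification of the quantization with $\Gamma(B,\cL)^G$. Your additional remarks (verifying the qcqs hypothesis via base change along $\pt\rightarrow\B G$ and matching the prequantum data under $[B/G]\times_{\B G}\B G\cong[B/G]$) are sensible elaborations of steps the paper leaves implicit.
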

\begin{proof}
Consider the diagram
\[
\xymatrix@R=0cm{
[X/G] \ar[dr] \ar[dd] && \B G \ar@{=}[dd] \ar[dl] \\
& [\g^*/G] \ar[dd] & \\
[B/G] \ar[dr] && \B G \ar@{=}[dl] \\
& \B G
}
\]
The square on the left is a prequantum $1$-shifted Lagrangian fibration by \cref{thm:prequantumHamiltonianspace}. The square on the right is a prequantum $1$-shifted Lagrangian fibration by \cref{ex:prequantuminvariants}. Therefore, their intersection $X\ham G\rightarrow [B/G]$ carries a prequantum $0$-shifted Lagrangian fibration by \cref{thm:prequantumLagrangianfibrationintersection}. The geometric quantization of the square on the left gives $\Gamma(B, \cL)$ as a $G$-representation and the geometric quantization of the square on the right gives the functor of $G$-invariants. The claim then follows from \cref{prop:quantizationfunctor}.
\end{proof}

\subsection{Coadjoint orbits}
\label{sect:coadjoint}

Coadjoint orbits $\cO\subset \g^*$ are interesting examples of Hamiltonian $G$-spaces. Let us explain how prequantizations work for those examples. Assume that $G$ is split.

\begin{prop}
Let $x\in\g^*$ be a semisimple element with stabilizer $L$ with Lie algebra $\Lie(L) = \fl$ under the coadjoint action. Let $P\subset G$ be a parabolic subgroup with Lie algebra $\Lie(P) = \fp$ containing $L$ as the Levi factor. Assume that $x$ integrates to a character $\lambda\colon P\rightarrow L\rightarrow \Gm$. Then
\[
\xymatrix{
\B L \ar^{x}[r] \ar[d] & [\g^*/G] \ar[d] \\
\B P \ar[r] & \B G
}
\]
is a prequantum $1$-shifted Lagrangian fibration. Suppose the restriction of $\lambda$ to the maximal torus $H\subset P$ is dominant. Then the geometric quantization is $V(\lambda)^*\in\Rep(G)$, the dual of the irreducible $G$-representation with highest weight $\lambda$.
\label{prop:semisimplecoadjoint}
\end{prop}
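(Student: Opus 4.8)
The plan is to realize the displayed square as an instance of \cref{thm:prequantumHamiltonianspace}, taking for the Hamiltonian $G$-scheme $X$ the coadjoint orbit $\cO$ itself. First I would pin down the geometry. Since $x$ is semisimple with stabilizer $L$, the orbit $\cO = G\cdot x$ is $G$-equivariantly isomorphic to $G/L$, and as a Hamiltonian $G$-space its moment map is the inclusion $\cO\hookrightarrow\g^*$. Passing to quotient stacks gives $[\cO/G]\cong\B L$, and the $1$-shifted Lagrangian $[\cO/G]\rightarrow[\g^*/G]$ produced in \cref{sect:Hamiltonian} is exactly the map $x\colon \B L\rightarrow[\g^*/G]$ of the statement. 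The inclusion $L\subset P$ furnishes a smooth $G$-equivariant projection $\pi\colon G/L\rightarrow G/P$ with fiber $P/L$; on quotient stacks this is the vertical map $\B L\rightarrow\B P$, so that the square of the proposition is precisely the square of \cref{thm:prequantumHamiltonianspace} with $B = G/P$.

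Next I would check the half-dimensionality hypothesis. Writing $\fu_P$ for the nilradical of $\fp$ and using the decomposition $\g\cong\fl\oplus\fu_P\oplus\fu_P^-$, one has $\dim G-\dim L = 2\dim\fu_P$ and $\dim(G/P)=\dim\fu_P$, so
\[\dim\cO = \dim G-\dim L = 2\bigl(\dim G-\dim P\bigr)=2\dim(G/P),\]
and the fibers of $\pi$ are half-dimensional as required.

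The prequantum data I would extract from the twisted cotangent description of the orbit. The character $\lambda\colon P\rightarrow\Gm$ determines a $G$-equivariant line bundle $\cL$ on $B=G/P$, and by \cite{Lisiecki} the orbit $\cO$ is $G$-equivariantly isomorphic to $\T^*_\lambda(G/P)$, carrying the Kirillov--Kostant--Souriau form to the twisted symplectic form. The canonical prequantization of a twisted cotangent bundle (\cref{prop:cotangentprequantization,thm:prequantumtwistedcotangent}) then provides a $G$-equivariant connection $\nabla$ on $\pi^*\cL$ whose curvature is $\omega$ and for which $\nabla_{a(x)}+\mu(x)$ is the infinitesimal $\g$-action on $\cL$; these are exactly the data fed to \cref{thm:prequantumHamiltonianspace}, yielding the prequantum $1$-shifted Lagrangian fibration. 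That theorem then identifies the geometric quantization with $\Gamma(G/P,\cL)$ as a $G$-representation, and when $\lambda|_H$ is dominant the Borel--Weil theorem evaluates this as $V(\lambda)^*$.

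I expect the third step to be the main obstacle: one must verify that the identification $\cO\cong\T^*_\lambda(G/P)$ of \cite{Lisiecki} is simultaneously compatible with the Kirillov--Kostant--Souriau form, the moment map $\cO\hookrightarrow\g^*$, and the $G$-equivariant structure on $\cL$, so that the abstract connection on the twisted cotangent bundle descends to the equivariant moment-map data demanded by \cref{thm:prequantumHamiltonianspace}. Equivalently, in the explicit \v{C}ech presentation used in the proof of \cref{thm:prequantumHamiltonianspace} one would have to produce the forms $A_i$ and functions $\beta_i$ attached to $\lambda$ and check the two displayed compatibility equations; the conceptual content is that the Kirillov form is the curvature of the natural connection on $\cL$ twisted by the Liouville form.
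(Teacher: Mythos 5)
Your strategy is genuinely different from the paper's. The paper never invokes \cref{thm:prequantumHamiltonianspace}; it works directly on the stacky diagram $\B L\rightarrow \B P\rightarrow\B G$: the trivial gerbe on $\B P$ is trivialized by the character $\lambda$, the affineness of $P/L$ makes $\cA^1(\B L/\B P,1)$ contractible so that the remaining connective-structure datum is exactly the integrality hypothesis $x=\ddr\lambda$ in $(\fl^*)^L$, and the two nondegeneracy conditions are checked by hand as Lie-theoretic statements: the Lagrangian condition amounts to $\coad_x\colon\g/\fl\rightarrow(\g/\fl)^*$ being an isomorphism, and the fibration condition to $\coad_x\colon\g/\fp\rightarrow(\fp/\fl)^*$ being an isomorphism, the latter proved by a root-space argument with $\sl_2$-triples using semisimplicity of $x$. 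The quantization is then the pushforward along $\B P\rightarrow\B G$, reduced to $\B B\rightarrow\B G$ via \cite{Jantzen} and computed by Borel--Weil--Bott (note that global sections here are derived, so one needs the higher-cohomology vanishing, not just classical Borel--Weil).

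However, as written your proposal has a genuine gap, and it sits exactly where you flag it. \cref{thm:prequantumHamiltonianspace} takes as \emph{input} a $G$-equivariant connection $\nabla$ on $\pi^*\cL$ extending the fiberwise flat connection, with curvature the Kirillov--Kostant--Souriau form and satisfying the moment-map identity $\nabla_{a(\xi)}+\langle\mu,\xi\rangle = $ infinitesimal action; your proposal never constructs this. You propose to extract it from the identification $\cO\cong\T^*_\lambda(G/P)$ of \cite{Lisiecki} together with \cref{prop:cotangentprequantization,thm:prequantumtwistedcotangent}, but those results are stated non-equivariantly, and one cannot simply pass to quotient stacks: the paper's stacky twisted cotangent $\T^*_{c_1}[0]\B P$ (a derived self-intersection over $[\fp^*/P]$) is \emph{not} $[\T^*_{c_1(\cL)}(G/P)/G]$, so the equivariant prequantum data does not descend formally from the scheme-level construction. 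Thus the compatibility of Lisiecki's isomorphism with the symplectic form, the moment map, and the equivariant structure on $\cL$ --- equivalently, the Kostant-type construction of equivariant prequantization data on $\cO\rightarrow G/P$, or the \v{C}ech verification you sketch --- is the actual mathematical content, and it is precisely the content the paper supplies by its direct arguments (integrality giving the trivialization, and the $\coad_x$ isomorphisms giving nondegeneracy). Until that step is carried out, the proposition is reduced to an unproven claim rather than proved.
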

\begin{proof}
To specify the prequantum data, we first have to trivialize the trivial gerbe on $\B P$. Such a trivialization is given by a line bundle $\cL$ on $\B P$, i.e. a character $\lambda\colon P\rightarrow \Gm$. Let us also recall that a character of $P$ necessarily factors through a character of $L$.

Next, we have to trivialize the connective structure on the gerbe on $\B L$. Note that $P / L$ is affine, so $\cA^1(\B L/\B P, 1)$ is contractible. Therefore, we just need to identify the pullback of the Liouville one-form to $\B L$ with the pullback of $\ddr \lambda$ in $\cA^1(\B L, 1) = (\fl^*)^L$, i.e. we have to assume that $x\in(\fl^*)^L$ is integral. This finishes the construction of the $1$-shifted isotropic structure on the above diagram.

Next, we have to check the relevant nondegeneracy conditions. To show that the isotropic structure on $\B L\rightarrow [\g^*/G]$ is Lagrangian, we have to check that the sequence of $L$-representations
\[\fl[1]\longrightarrow(\g[1]\xrightarrow{\coad_x} \g^*)\longrightarrow \fl^*\]
is a fiber sequence, where $\coad_x\colon \g\rightarrow \g^*$ denotes the coadjoint action on $x$. The previous fact is equivalent to acyclicity of the complex
\[\fl\longrightarrow \g\longrightarrow \g^*\longrightarrow \fl^*\]
which is equivalent to the fact that $\coad_x\colon \g/\fl\rightarrow (\g/\fl)^*$ is an isomorphism. This map is clearly injective and the two spaces have the same dimension which proves the claim.

The fact that the above diagram defines a $1$-shifted Lagrangian fibration on $\B L\rightarrow [\g^*/G]$ boils down to exactness of the sequence
\[\fp/\fl\longrightarrow \g/\fl\cong (\g/\fl)^*\longrightarrow (\fp/\fl)^*,\]
where the middle isomorphism is given by $\coad_x$. In turn, this fact is equivalent to
\[\coad_x\colon \g/\fp\longrightarrow (\fp/\fl)^*\]
being an isomorphism. By checking the dimensions it is enough to prove that this map is injective. To prove it, we may assume that $P\subset G$ is a standard parabolic subgroup \cite[Proposition 14.18]{Borel}. Decompose $\fp = \fl \oplus \fu^+$ and let $\fl\oplus \fu^-$ be the opposite parabolic subalgebra.

Choose a nondegenerate invariant symmetric bilinear pairing $\langle -, -\rangle$ on $\g$. For every standard $\sl_2$ triple $\{e_\alpha, e_{-\alpha}, h_\alpha\}$ we have
\[\langle x, [e_\alpha, e_{-\alpha}]\rangle = -\langle [e_\alpha, x], e_{-\alpha}\rangle.\]
Since the root spaces are one-dimensional, this expression vanishes only if $e_\alpha\in\fl$. Therefore, $\langle x, h_\alpha\rangle$ is nonzero for every root in $\fu^+$ and therefore the induced pairing $\fu^+\otimes \fu^-\rightarrow k$ is nondegenerate.

The geometric quantization is given by the pushforward of $\cL$ along $\B P\rightarrow \B G$. Let $B\subset P$ be the Borel subgroup. By \cite[Part II, Proposition 4.6]{Jantzen} the pushforward coincides with the pushforward along $\B B\rightarrow \B G$ of the restriction of $\cL$ to $\B B$ and the claim follows from the Borel--Weil--Bott theorem.
\end{proof}

\begin{remark}
Suppose $\cO\subset \g^*$ is a coadjoint orbit. A $G$-equivariant $0$-shifted Lagrangian fibration $\cO\rightarrow B$ has to have $B$ a homogeneous space $G/P$. It is shown in \cite{OzekiWakimoto} that such a $P$ has to be a parabolic subgroup. In particular, by counting dimensions one observes that there are many coadjoint orbits which do not admit a $G$-equivariant polarization, see e.g. \cite[Corollary 1.8]{GrahamVogan}.
\end{remark}

\subsection{Slodowy slices}
\label{sect:Slodowy}

Assume $k$ is an algebraically closed field of characteristic $0$ and $G$ is a semisimple algebraic group with Lie algebra $\g$ equipped with nondegenerate invariant symmetric bilinear pairing $\langle -, -\rangle$. We denote by $\kappa\colon \g\rightarrow \g^*$ the induced isomorphism. Let $e\in\g$ be a nonzero nilpotent element. Using the Jacobson--Morozov theorem \cite[Theorem 3.7.1]{ChrissGinzburg} we may extend it to an $\sl_2$-triple $\{e, h, f\}$. Denote
\[\chi = \langle e, -\rangle\]
and $\g^f\subset \g$ the subspace of elements $x\in\g$ such that $[f, x] = 0$.

\begin{defn}
The \defterm{Slodowy slice} to $\chi$ is the subvariety $\cS = \chi + \kappa(\g^f)\subset \g^*$.
\end{defn}

We refer to \cite{GanGinzburg,Arakawa} for more details on Slodowy slices.

\begin{remark}
If $e$ is a regular nilpotent element, the Slodowy slice reduces to the Kostant slice \cite{Kostant}.
\end{remark}

Let $\g=\oplus_{n\in\Z} \g(n)$ be the decomposition into the $h$-eigenspaces. We get an induced symplectic pairing $x,y\mapsto \chi([x, y])$ on $\g(-1)$. Fix a Lagrangian subalgebra $\fl\subset \g(-1)$ and let
\[\fm = \fl\oplus \bigoplus_{n\leq -2} \g(n).\]
It is a nilpotent subalgebra of $\g$ and we denote by $M\subset G$ the unipotent subgroup integrating $\fm$. It is easy to see that $\chi$ restricts to a character of $\fm$.

\begin{prop}
Consider the zero closed one-form of degree $2$ on $\B G$ together with a trivialization along $\B M\rightarrow \B G$ given by $\chi$. Then the $1$-shifted twisted conormal bundle
\[
\xymatrix{
\N^*_{\chi}[1](\B M) \ar[d] \ar[r] & \T^*[1](\B G) \ar[d] \\
\B M \ar[r] & \B G
}
\]
is equivalent to
\[
\xymatrix{
\cS \ar[r] \ar[d] & [\g^*/G] \ar[d] \\
\B M \ar[r] & \B G
}
\]
In particular, the latter diagram has a natural structure of a $1$-shifted Lagrangian fibration.
\label{prop:slodowy}
\end{prop}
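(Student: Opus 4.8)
The plan is to identify the $1$-shifted twisted conormal bundle $\N^*_\chi[1](\B M)$ explicitly and then recognise it as the Slodowy slice through the classical Whittaker reduction. First I would use the identification $[\g^*/G]\cong\T^*[1](\B G)$ recorded at the start of \cref{sect:Hamiltonian}. The datum in the statement — the zero closed one-form of degree $2$ on $\B G$ together with the trivialization along $\B M\to\B G$ supplied by the character $\chi|_\fm\in\cA^{1,\cl}(\B M,1)$ — is exactly the input to the relative twisted-conormal construction (the $n$-shifted $\alpha$-twisted conormal bundle $\N^*_\alpha[n]Y\to\T^*_\alpha[n]X$) applied to $f\colon\B M\to\B G$ with ambient twist $\alpha=0$ and nullhomotopy $h$ the character. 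That construction already produces the $1$-shifted Lagrangian morphism $\N^*_\chi[1](\B M)\to\T^*[1](\B G)$ together with its projection to $\B M$, so the task is to compute this diagram in the present linear setting and compare it with the $\cS$-diagram.

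Second, I would carry out the computation using the linear models of the cotangent stacks of classifying stacks. One has $\T^*[1](\B G)\cong[\g^*/G]$ and $\T^*[1](\B M)\cong[\fm^*/M]$, and the relative cotangent complex $\bL_{\B M/\B G}$ is the $M$-representation $\fm^\perp=\ker(\g^*\to\fm^*)$ placed in degree $0$; accordingly the untwisted conormal of \cref{ex:shiftedconormal} is $\N^*[1](\B M)\cong[\fm^\perp/M]$, with projection to $\B M=[\pt/M]$ and the map to $[\g^*/G]$ induced by $\fm^\perp\hookrightarrow\g^*$ and $M\hookrightarrow G$. The effect of the character twist $\chi$ is to replace the linear subspace $\fm^\perp$ by the affine subspace $\chi+\fm^\perp=\{\xi\in\g^*:\xi|_\fm=\chi|_\fm\}$, so that $\N^*_\chi[1](\B M)$ is the coadjoint $M$-moment fibre $[(\chi+\fm^\perp)/M]$ at $\chi|_\fm$, i.e.\ the $\chi$-twisted Whittaker reduction of $[\g^*/G]$ along $M$. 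Matching the abstract nullhomotopy-twist with this concrete affine shift is the first point requiring care.

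Third, I would invoke the Kostant--Gan--Ginzburg theorem \cite{GanGinzburg}: with $\fm=\fl\oplus\bigoplus_{n\le-2}\g(n)$ as chosen, $M$ acts freely on $\chi+\fm^\perp$ and the Slodowy slice $\cS=\chi+\kappa(\g^f)$ is a global cross-section, so the action map $M\times\cS\to\chi+\fm^\perp$ is an isomorphism; the requisite dimension identity $\dim\g=2\dim\fm+\dim\g^f$ follows from $\sl_2$-representation theory applied to the $h$-grading. Consequently $[(\chi+\fm^\perp)/M]\cong\cS$, and under this equivalence the map to $[\g^*/G]$ becomes the slice inclusion $\cS\hookrightarrow\g^*\to[\g^*/G]$ while the conormal projection becomes $\cS\to\B M$, which is the asserted equivalence of diagrams. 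The Lagrangian fibration structure then transports along this equivalence from $\N^*_\chi[1](\B M)$. I expect the genuine obstacle to be upgrading the Lagrangian morphism to a Lagrangian \emph{fibration} — precisely the point flagged after \cref{prop:twistedcotangentfibration} as requiring ``more work'' — and in the case at hand I would establish it by realising $\cS$ as an intersection of Lagrangian fibrations and applying \cref{thm:Lagrangianfibrationintersection}, in direct analogy with the proof of \cref{prop:twistedcotangentfibration}.
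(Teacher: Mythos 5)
Your proposal is correct and follows essentially the same route as the paper: the paper likewise computes the conormal bundle from the fiber sequence $\N^*_{\B M}\rightarrow \g^*[-1]\xrightarrow{\mu} \fm^*[-1]$ of $M$-representations, identifies $(\N^*_{\chi}[1](\B M)\rightarrow \B M)\cong (\mu^{-1}(\chi)/M\rightarrow \B M)$ with $\mu^{-1}(\chi)=\chi+\fm^\perp$, and concludes via \cite[Lemma 2.1]{GanGinzburg} that this quotient is $\cS$. Your last paragraph is in fact more scrupulous than the paper's proof, which inherits the Lagrangian fibration structure from the general twisted-conormal statement that the paper only records as an expectation after \cref{prop:twistedcotangentfibration}, whereas you propose to close that gap by realising $\cS$ as an intersection of Lagrangian fibrations and applying \cref{thm:Lagrangianfibrationintersection}.
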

\begin{proof}
The conormal bundle to $\B M\rightarrow \B G$ fits into a fiber sequence
\[\N^*_{\B M}\rightarrow \g^*[-1]\xrightarrow{\mu} \fm^*[-1]\]
of $M$-representations (we recall that $\QCoh(\B M)\cong \Rep(M)$). Therefore,
\[(\N^*_{\chi}[1](\B M)\rightarrow \B M)\cong (\mu^{-1}(\chi) / M\rightarrow \B M).\]
But the space $\mu^{-1}(\chi) / M$ is isomorphic to the Slodowy slice $\cS$, see \cite[Lemma 2.1]{GanGinzburg}.
\end{proof}

\begin{remark}
The product $\cS\times G$ is a Hamiltonian $G$-space which may be identified with the space of solutions of Nahm's equations on the interval $[0, 1]$ with a pole at $t = 0$ specified by the $\sl_2$-triple $\{e, h, f\}$, see \cite[Proposition 3.1]{Bielawski}.
\end{remark}

\begin{remark}
The diagram in \cref{prop:slodowy} does \emph{not} admit a prequantization since $\chi\colon \fm\rightarrow k$ does not integrate to a character of $M$. Instead, one can work with the formal completions $\B \widehat{M}$ and $\B \widehat{G}$. In this case the geometric quantization of the diagram in \cref{prop:slodowy} produces the functor of Whittaker $(\fm, \chi)$-invariants $\LMod_{\mathrm{U}\g}\rightarrow \Ch$.
\end{remark}

\subsection{Flat connections}
\label{sect:CS}

Let $G$ be a split connected simply-connected semisimple group with Lie algebra $\g$. Let $H\subset G$ be a maximal torus with Lie algebra $\h$. Denote by $W$ the Weyl group. Recall that a 2-shifted symplectic structure $\omega$ on $\B G$ is the same as a nondegenerate invariant symmetric bilinear pairing $\langle -, -\rangle\in\Sym^2(\g^*)^G\cong \Sym^2(\h^*)^W$. We say it is \defterm{integral} if it comes from an integral quadratic Weyl-invariant form on the cocharacter lattice.

Let $C$ be a smooth and proper curve. Let
\[\LocSys_G(C) = \Map(C_{\dR}, \B G)\]
be the moduli stack of flat $G$-connections on $C$ and
\[\Bun_G(C) = \Map(C, \B G)\]
be the moduli stack of $G$-bundles on $C$. Recall from \cref{sect:deRham} that there is a natural closed one-form of degree $1$ $\int_C \ev^*\omega$ on $\Bun_G(C)$. According to \cref{prop:deRhamMap} there is an isomorphism
\[\LocSys_G(C)\cong \T^*_{\int_C \ev^*\omega} \Bun_G(C).\]
In particular, we may consider the corresponding $0$-shifted symplectic structure.

\begin{prop}
Suppose the pairing $\langle -, -\rangle$ is integral. Then there is a natural line bundle $\cL$ on $\Bun_G(C)$ such that $c_1(\cL)=\int_C \ev^*\omega$.
\label{prop:Gaitsgory}
\end{prop}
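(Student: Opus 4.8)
The plan is to realize $\int_C\ev^*\omega$ as the first Chern class of a determinant-of-cohomology line bundle, matching the two via Grothendieck--Riemann--Roch, thereby extending the computation for the Killing form sketched in the example preceding the statement. First I would reduce to the case where $G$ is simple: since $G$ is simply-connected semisimple it is a product $\prod_i G_i$ of simple simply-connected factors, the pairing decomposes as $\langle-,-\rangle=\oplus_i\langle-,-\rangle_i$, and correspondingly $\Bun_G(C)=\prod_i\Bun_{G_i}(C)$ with $\int_C\ev^*\omega=\sum_i\mathrm{pr}_i^*\int_C\ev_i^*\omega_i$; so it suffices to produce each $\cL_i$ and set $\cL=\boxtimes_i\cL_i$. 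For $G$ simple the invariant symmetric bilinear forms are one-dimensional, and the integral ones form the lattice $\Z\cdot\langle-,-\rangle_{\mathrm{bas}}$ generated by the basic form on the coroot lattice $Q^\vee$; thus an integral $\langle-,-\rangle$ equals $\ell\cdot\langle-,-\rangle_{\mathrm{bas}}$ for a unique $\ell\in\Z$, its \emph{level}.

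Next I would construct the line bundle. Writing $p\colon C\times\Bun_G(C)\to\Bun_G(C)$ for the projection and $\ev\colon C\times\Bun_G(C)\to\B G$ for the evaluation, any representation $\rho\colon G\to\mathrm{GL}(V)$ gives a universal vector bundle $\cE_\rho$ on $C\times\Bun_G(C)$ and hence a determinant-of-cohomology line bundle $\det Rp_*\cE_\rho$ on $\Bun_G(C)$, whose level is the Dynkin index of $\rho$. It is a theorem of Kumar--Narasimhan, Faltings and Beauville--Laszlo--Sorger that for $G$ simple simply-connected one has $\Pic(\Bun_G(C))\cong\Z$, generated by a line bundle $\cL_{\mathrm{bas}}$ of level $1$; I would then set $\cL=\cL_{\mathrm{bas}}^{\otimes\ell}$.

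It then remains to verify $c_1(\cL)=\int_C\ev^*\omega$ in $\cA^{1,\cl}(\Bun_G(C),1)$. For this I would apply Grothendieck--Riemann--Roch to $p$:
\[
\mathrm{ch}(Rp_*\cE_\rho)=p_*\bigl(\mathrm{ch}(\cE_\rho)\,\mathrm{td}(\T_p)\bigr),
\]
where $\T_p=\mathrm{pr}_C^*\T_C$. Since $\rho$ has trivial determinant on a semisimple group, $c_1(\cE_\rho)$ is pulled back from $C$, so the first Chern class of $\det Rp_*\cE_\rho$ reduces to $p_*\mathrm{ch}_2(\cE_\rho)$ up to a class pulled back from a point. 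The degree-four component $\mathrm{ch}_2$ of the universal representation on $\B G$ is the invariant form $x\mapsto\tfrac{1}{2}\mathrm{tr}(\rho(x)^2)$ on $\g$, which is the Dynkin index of $\rho$ times $\langle-,-\rangle_{\mathrm{bas}}$; integrating over $C$ therefore identifies $c_1(\det Rp_*\cE_\rho)$ with $\int_C\ev^*(\text{Dynkin index}\cdot\omega_{\mathrm{bas}})$. Specializing to the level-one generator and taking the $\ell$-th power gives $c_1(\cL)=\ell\cdot c_1(\cL_{\mathrm{bas}})=\int_C\ev^*(\ell\,\omega_{\mathrm{bas}})=\int_C\ev^*\omega$.

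The main obstacle lies entirely in the last two steps and is twofold. First, one must know the basic level is represented by a genuine line bundle on $\Bun_G(C)$ for \emph{every} simple simply-connected $G$: for the groups whose minimal Dynkin index exceeds $1$ (e.g. the spin groups, $E_6$, $E_7$) the generator $\cL_{\mathrm{bas}}$ is not the determinant line bundle of a single representation, and the naive construction only produces the line bundles of level divisible by that index, so one must invoke the Picard-group computations cited above. Second, one must match the \emph{de Rham} class $\int_C\ev^*\omega$ — a closed one-form of degree $1$ — with the Chern class coming from Grothendieck--Riemann--Roch, rather than merely their topological shadows; this is the normalization bookkeeping in the GRR computation, already carried out for the Killing form in the example above, with the general case following by linearity in $\langle-,-\rangle$.
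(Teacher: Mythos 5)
Your argument is correct in outline, but it follows a genuinely different route from the paper's. The paper does not reduce to simple factors, does not invoke the Picard group of $\Bun_G(C)$, and does not use Riemann--Roch at all: it takes the line bundle that Gaitsgory constructs directly from the integral form using the Gersten resolution for $\cK$-cohomology on $S\times C$, maps the Gersten complex to the Cousin complex computing $\H^q(S\times C, \Omega^{p,\cl})$ via $\dlog$ to obtain a closed-form representative of $c_1(\cL)$, and then identifies that representative with $\int_C\ev^*\omega$ by Serre duality expressed in Cousin terms. That route buys uniformity (all integral forms at once, no Dynkin-index case analysis) and, crucially, it produces the identity of closed one-forms of degree $1$ canonically rather than deducing it. Your route rests on two substantial inputs the paper avoids: (i) the theorem that $\Pic(\Bun_G(C))\cong\Z$ for simple simply-connected $G$, which you need both for the existence of $\cL_{\mathrm{bas}}$ and to know $\det Rp_*\cE_\rho\cong \cL_{\mathrm{bas}}^{\otimes d_\rho}$ up to a line bundle pulled back from a point; and (ii) a Grothendieck--Riemann--Roch statement valid not merely topologically or in Chow groups but in $\H^1(\Bun_G(C), \Omega^{\geq 1})$, i.e.\ at the level of closed one-forms of degree $1$, for the schematic proper morphism $C\times \Bun_G(C)\rightarrow \Bun_G(C)$ of Artin stacks. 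Point (ii) is more than ``normalization bookkeeping'': it is precisely the issue the paper's Cousin-complex argument is designed to handle, and the Killing-form example you lean on only asserts it (``it is easy to see''); to make it rigorous you would argue by smooth descent to affine bases together with a closed-form refinement of GRR for relative curves (in the spirit of Deligne's functorial Riemann--Roch). Finally, make your last step explicit: for groups of minimal Dynkin index $d>1$, GRR only computes $c_1$ of the determinant bundles, i.e.\ of $\cL_{\mathrm{bas}}^{\otimes d_\rho}$, so you should conclude $c_1(\cL_{\mathrm{bas}})=\int_C\ev^*\omega_{\mathrm{bas}}$ by observing that $d_\rho\bigl(c_1(\cL_{\mathrm{bas}})-\int_C\ev^*\omega_{\mathrm{bas}}\bigr)=0$ and that $\H^1(\Bun_G(C),\Omega^{\geq 1})$ is a $k$-vector space in characteristic zero, hence uniquely divisible; with that inserted, your proof goes through.
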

\begin{proof}
The line bundle is constructed in \cite[Section 2.4]{GaitsgoryFactorizable} using the Gersten resolution for $\cK$-cohomology on the product $S\times C$, where $S$ is a smooth variety. There is a natural map $\dlog$ from the Gersten complex to the Cousin complex computing $\H^q(S\times C, \Omega^{p, \cl})$, see e.g. \cite[Proposition 2.1.9]{BraunlingWolfson}. So, replacing the Gersten complex with the Cousin complex in \cite{GaitsgoryFactorizable}, we obtain a construction of the closed one-form of degree 1 on $\Bun_G(C)$ representing $c_1(\cL)$. Finally, the equivalence between that representative of $c_1(\cL)$ and $\int_C \ev^*\omega$ is given by using the formulation of Serre duality in terms of the Cousin complex \cite{Hartshorne}.
\end{proof}

\begin{thm}
Suppose the pairing $\langle -, -\rangle$ is integral. Then there is a natural prequantum $0$-shifted Lagrangian fibration structure on $\LocSys_G(C)\rightarrow \Bun_G(C)$ specified by a line bundle $\cL$ on $\Bun_G(C)$. Its geometric quantization is
\[\Gamma(\Bun_G(C), \cL).\]
\label{thm:ChernSimons}
\end{thm}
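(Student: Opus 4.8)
The plan is to deduce the statement by combining \cref{prop:deRhamMap}, \cref{prop:Gaitsgory} and \cref{thm:prequantumtwistedcotangent}; no new geometric input is required beyond these. Since $\B G$ is $2$-shifted symplectic, applying \cref{prop:deRhamMap} with $n = 2$ gives an isomorphism
\[\LocSys_G(C) = \Map(C_{\dR}, \B G) \cong \T^*_{\int_C \ev^*\omega}[0]\Bun_G(C) = \T^*_{\int_C\ev^*\omega}\Bun_G(C)\]
compatible with the projections to $\Bun_G(C)$. Here $\int_C\ev^*\omega$ is the closed one-form of degree $n - 1 = 1$ on $\Bun_G(C)$, which is exactly the degree required to twist a $0$-shifted cotangent bundle.

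First I would invoke \cref{prop:Gaitsgory}, which uses integrality of $\langle-,-\rangle$ to produce a line bundle $\cL$ on $\Bun_G(C)$ with $c_1(\cL) = \int_C\ev^*\omega$ in $\cA^{1,\cl}(\Bun_G(C), 1)$. A line bundle is precisely a $0$-gerbe in the sense of the paper, so I may then apply \cref{thm:prequantumtwistedcotangent} with $X = \Bun_G(C)$, $n = 0$ and $\cG = \cL$: it equips the projection $\T^*_{c_1(\cL)}\Bun_G(C)\to \Bun_G(C)$ with a natural prequantum $0$-shifted Lagrangian fibration structure determined by $\cL$. Transporting this structure along the identification above yields the asserted prequantum $0$-shifted Lagrangian fibration structure on $\LocSys_G(C)\to \Bun_G(C)$, specified by $\cL$. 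Finally, by the definition of the geometric quantization of a prequantum $0$-shifted Lagrangian fibration specified by a line bundle on the base, the output is the chain complex of global sections, namely $\Gamma(\Bun_G(C), \cL)$.

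The only points needing care are degree bookkeeping and the choice of symplectic structure. On the bookkeeping side one checks $n - 2 = 0$, so that the cotangent bundle is genuinely $0$-shifted, and $\deg \int_C\ev^*\omega = n - 1 = 1 = 0 + 1$, so that this class has exactly the degree demanded both by the twisted cotangent bundle construction and by $c_1$ of a line bundle. The subtler point, and the one I would flag as the main obstacle, is that the symplectic structure carried by $\LocSys_G(C)$ in this prequantum datum is the one transported from $\T^*_{\int_C\ev^*\omega}\Bun_G(C)$, rather than the intrinsic mapping-stack symplectic structure of \cite[Theorem 2.5]{PTVV}; these agree only up to the closure-data comparison left open in the Conjecture following \cref{prop:deRhamMap}. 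Because the prequantum structure here is defined through the twisted cotangent bundle identification, the argument does not depend on that conjecture, so the genuine content is entirely carried by \cref{prop:deRhamMap} and \cref{prop:Gaitsgory}, and the remaining work is only to verify that \cref{thm:prequantumtwistedcotangent} applies, i.e. that $\Bun_G(C)$ is a derived Artin stack locally of finite presentation.
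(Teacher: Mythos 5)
Your proposal is correct and follows essentially the same route as the paper's own proof, which likewise combines \cref{prop:Gaitsgory}, \cref{prop:deRhamMap} and \cref{thm:prequantumtwistedcotangent} and then invokes the definition of geometric quantization of a prequantum $0$-shifted Lagrangian fibration. Your additional remarks --- the degree bookkeeping and the observation that the prequantum structure is defined via the twisted cotangent identification and hence does not depend on the open conjecture comparing closure data --- are accurate and consistent with how the paper uses these results.
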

\begin{proof}
By \cref{prop:Gaitsgory} there is a line bundle $\cL$ on $\Bun_G(C)$, such that $c_1(\cL) = \int_C\ev^*\omega$. In particular, by \cref{prop:deRhamMap} we obtain an isomorphism
\[\LocSys_G(C)\cong \T^*_{c_1(\cL)}\Bun_G(C).\]
The claim then follows from \cref{thm:prequantumtwistedcotangent}.
\end{proof}

\begin{remark}
Let $K$ be a compact simply-connected simple Lie group and let $G$ be its complexification. In this case $\Sym^2(\g^*)^G\cong \C$, so a pairing is specified by a number (the \emph{level}). Integral pairings correspond to integral levels. The phase space of the classical Chern--Simons theory on $C$ is the moduli space of flat $K$-connections. The Hilbert space on $C$ is given by $\Gamma(\Bun_G(C), \cL)$ \cite{WittenCS,ADPW} and \cref{thm:ChernSimons} provides an algebraic version of this result.
\end{remark}

Denote by $\LocSys_G(C, \trivial)$ the moduli space of flat connections on the trivial $G$-bundle over $C$.

\begin{prop}
Suppose the pairing $\langle -, -\rangle$ is integral. The diagram
\begin{equation}
\xymatrix{
\LocSys_G(C, \trivial) \ar[d] \ar[r] & \LocSys_G(C) \ar[d] \\
\pt \ar^{\trivial}[r] & \Bun_G(C)
}
\label{eq:chiralWZW}
\end{equation}
has a natural structure of a prequantum $0$-shifted Lagrangian fibration. Its geometric quantization is the functional
\[\Gamma(\Bun_G(C), \cL)\longrightarrow k\]
given by evaluating the section of $\cL$ at the trivial bundle.
\label{prop:chiralWZW}
\end{prop}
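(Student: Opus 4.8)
The plan is to recognise the square as the restriction of the prequantum $0$-shifted Lagrangian fibration $\LocSys_G(C)\to\Bun_G(C)$ of \cref{thm:ChernSimons} to the fibre over the trivial bundle. First I would use the isomorphism $\LocSys_G(C)\cong\T^*_{c_1(\cL)}\Bun_G(C)$ from \cref{thm:ChernSimons} to identify the homotopy fibre $\pt\times_{\Bun_G(C)}\LocSys_G(C)$ over $\trivial\colon\pt\to\Bun_G(C)$ with the twisted cotangent fibre, i.e. with the $0$-shifted twisted conormal bundle $\N^*_{c_1(\cL)}\pt$ of the point inclusion; since a flat connection on the trivial bundle is precisely a point of this fibre, this twisted conormal bundle is $\LocSys_G(C,\trivial)$. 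In particular the square of the proposition is the twisted conormal-bundle square for $\trivial\colon\pt\to\Bun_G(C)$.

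Next I would equip this square with its prequantum data. The defining line bundle on $B_X=\Bun_G(C)$ is $\cL$, and its connective structure on $\pi_X^*\cL$ over $\LocSys_G(C)$ is the one produced in \cref{thm:ChernSimons}. To trivialise the gerbe $g^*\cL=\cL|_{\trivial}$ on $B_L=\pt$ I would simply choose a basis of the fibre of $\cL$ at the trivial bundle; the remaining datum, a trivialisation of the relative one-form $f^*\nabla$ (with $f$ the top horizontal map), is automatic because $B_L=\pt$ forces $\fib\bigl(\cA^1(\LocSys_G(C,\trivial),0)\to\cA^1(\LocSys_G(C,\trivial)/\pt,0)\bigr)$ to be contractible. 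The first nondegeneracy condition is \cref{thm:ChernSimons}; the second is that the fibre inclusion $\LocSys_G(C,\trivial)\to\LocSys_G(C)$ is Lagrangian, which holds because fibres of Lagrangian fibrations are Lagrangian. Concretely I would obtain all of this at once from the relative analogue of \cref{thm:prequantumtwistedcotangent}, proved by intersecting $\Gamma_0$, $\Gamma_{c_1(\cL)}$ and the conormal of $\trivial$ and invoking \cref{thm:prequantumLagrangianfibrationintersection}.

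Finally, for the geometric quantization I would read the square as the $0$-shifted Lagrangian correspondence $\LocSys_G(C)\leftarrow\LocSys_G(C,\trivial)\to\pt$ lying over $\Bun_G(C)\xleftarrow{\trivial}\pt\xrightarrow{\id}\pt$. The map $\pt\to\pt$ carries the trivial relative volume form of degree $0$, so the definition of the geometric quantization of a $0$-shifted Lagrangian correspondence gives the composite
\[\Gamma(\Bun_G(C),\cL)\xrightarrow{\ \trivial^*\ }\cL|_{\trivial}\xrightarrow{\ \sim\ }k,\]
namely restriction of a global section of $\cL$ to the trivial bundle followed by the chosen trivialisation, which is exactly evaluation of the section at the trivial bundle.

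The hard part will be the geometric input of the first paragraph: identifying $\LocSys_G(C,\trivial)$ with the twisted conormal bundle and checking the nondegeneracy of the induced relative fibration — precisely the point the paper flags as requiring more work for twisted conormal bundles. Once that identification and the Lagrangian property of the fibre inclusion are secured, assembling the prequantum data and computing the functional are formal. I would also note that the resulting functional depends on the chosen trivialisation of $\cL|_{\trivial}$ up to a scalar.
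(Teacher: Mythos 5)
Your proposal is correct and follows essentially the same route as the paper: identify the square as the twisted conormal-bundle square $\N^*_{c_1(\cL)}(\pt)\rightarrow \T^*_{c_1(\cL)}\Bun_G(C)$ via the Cartesianness of \eqref{eq:chiralWZW} and the canonical trivialization of $\trivial^*c_1(\cL)$, choose a trivialization of $\cL$ at the trivial bundle to supply the prequantum data, and read off the quantization as evaluation. Your additional observations — that the nondegeneracy for twisted conormal bundles is the part the paper itself only sketches, and that the functional depends on the chosen trivialization of $\cL|_{\trivial}$ up to a scalar — are accurate and make explicit what the paper's terse proof leaves implicit.
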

\begin{proof}
Consider the inclusion of the trivial bundle $\trivial\colon \pt\rightarrow \Bun_G(C)$. The pullback of $c_1(\cL)$ to $\pt$ is canonically trivialized, so we may consider the corresponding twisted conormal bundle
\[\N^*_{c_1(\cL)}(\pt)\longrightarrow \T^*_{c_1(\cL)}\Bun_G(C).\]
By definition of $\LocSys_G(C, \trivial)$, the diagram \eqref{eq:chiralWZW} is Cartesian. In particular,
\[\N^*_{c_1(\cL)}(\pt)\cong \LocSys_G(C, \trivial).\]
Choose a trivialization of $\cL$ at the trivial bundle $\pt\rightarrow \Bun_G(C)$. Then we obtain a prequantum data on the diagram.
\end{proof}

\begin{remark}
The chiral WZW model is a boundary condition for the Chern--Simons theory. Namely, it is determined by a Lagrangian in the phase space given by setting the antiholomorphic part of the connection to be zero (see e.g. \cite{WittenFactorization}). In other words, the space of classical solutions of the chiral WZW model on $C$ is given by studying holomorphic connections on the trivial holomorphic $G$-bundle over $C$. The chiral WZW partition function on $C$ is an element of (or a functional on) the Hilbert space of the Chern--Simons theory associated to $C$ and \cref{prop:chiralWZW} provides an algebraic avatar of this computation.
\end{remark}

\printbibliography

\end{document}